\tikzstyle{vertex}=[circle, draw, inner sep=0pt, minimum size=3pt]
\newcommand{\vertex}{\node[vertex][fill]}
\DeclareSymbolFont{AMSb}{U}{msb}{m}{n}
\DeclareMathSymbol{\N}{\mathbin}{AMSb}{"4E}
\DeclareMathSymbol{\Z}{\mathbin}{AMSb}{"5A}
\DeclareMathSymbol{\R}{\mathbin}{AMSb}{"52}
\DeclareMathSymbol{\Q}{\mathbin}{AMSb}{"51}
\DeclareMathSymbol{\C}{\mathbin}{AMSb}{"43}
\newcommand{\letbe}{\mathbin{:\!\raisebox{-.36pt}{=}}\,}
\DeclareMathOperator{\Hom}{Hom}
\theoremstyle{plain}
\newtheorem{lemma}{Lemma}[section]
\newtheorem{proposition}[lemma]{Proposition}
\newtheorem{theorem}[lemma]{Theorem}
\newtheorem{corollary}[lemma]{Corollary}
\newtheorem{conjecture}[lemma]{Conjecture}
\theoremstyle{definition}
\newtheorem{definition}[lemma]{Definition}
\newtheorem{example}[lemma]{Example}
\newtheorem{remark}[lemma]{Remark}
\theoremstyle{remark}
\numberwithin{equation}{section}
\begin{document}

\title{Torus Manifolds in Equivariant Complex Bordism}
\author{Alastair Darby}
\address{School of Mathematics\\
The University of Manchester\\
Oxford Road\\
Manchester\\
M13 9PL.}
\email{alastair.darby@manchester.ac.uk}
\date{\today}

\begin{abstract}
We restrict geometric tangential equivariant complex $T^n$-bordism to torus manifolds and provide a complete combinatorial description of the appropriate non-commutative ring. We discover, using equivariant $K$-theory characteristic numbers, that the information encoded in the oriented torus graph associated to a stably complex torus manifold completely describes its equivariant bordism class. We also consider the role of omnioriented quasitoric manifolds in this description.
\end{abstract}

\maketitle

\section{Introduction}

Bordism is a well known equivalence relation on compact manifolds of the same dimension, using the notion of boundary, whose importance was revealed by Ren\'e Thom in the 1950s when he showed that bordism groups could be computed using homotopy theory. Thom's work was concerned with what we now call \emph{unoriented} bordism, where two closed $n$-dimensional manifolds, $M$ and $N$, are \emph{bordant} if there is a compact $(n+1)$-manifold whose boundary is the disjoint union $M\sqcup N$. The bordism class of a manifold $[M]\in \Omega_n^O$ is completely determined by its \emph{Stiefel-Whitney characteristic numbers}. The set of bordism classes forms an abelian group $\Omega_n^O$ for which addition is induced by disjoint union and the identity is represented by the empty manifold. Cartesian product of manifolds induces a product structure and we obtain a graded ring $\Omega_*^O\letbe \oplus_{n\geq 0} \Omega_n^O$. Thom computes this to be isomorphic to a $\Z /2$-polynomial algebra by showing that it is isomorphic to the homotopy groups of the \emph{Thom spectrum} $MO$, defined by considering the \emph{Thom spaces} of the tautological bundles that classify real vector bundles. This is achieved by considering the \emph{Pontryagin-Thom construction}, which provides us with a map $\Omega_*^O\to MO_*$, and finding an inverse using \emph{transversality} arguments.

Independently, Milnor and Novikov gave a \emph{complex} variant of bordism using the Thom spectrum $MU$, defined by considering the Thom spaces of the tautological bundles that classify complex vector bundles. It was shown that $MU$ represents bordism classes of \emph{stably complex manifolds}, which admit a complex structure on the direct sum of their tangent bundle and a trivial bundle of sufficiently large rank. Complex bordism classes are completely determined by their \emph{Chern characteristic numbers}, and constitute a graded ring $\Omega_*^U$. Transversality, as in the unoriented case, provides us with an isomorphism of $\Omega_*^U$ with the homotopy groups of $MU$, which is known to be a polynomial algebra over $\Z$. Every spectrum provides us with a homology and cohomology theory and the complex Thom spectrum $MU$ is \emph{universal} among \emph{complex-orientable} cohomology theories.

In an attempt to further the study of transformation groups, Conner and Floyd \cite{CF64} define an \emph{equivariant} version of geometric bordism. This is, by analogy with the non-equivariant case, an equivalence relation on compact (stably complex) $G$-manifolds, for a compact Lie group $G$, which results in graded rings $\Omega_*^{O:G}$ and $\Omega_*^{U:G}$. An equivariant version of Thom spectra was given by tom Dieck~\cite{tD70} resulting in the \emph{$G$-spectra} $MO_G$ and $MU_G$. It is still possible to define maps $\Omega_*^{O:G}\to MO_*^G$ and $\Omega_*^{U:G}\to MU_*^G$ using the Pontryagin-Thom construction, but unfortunately, for non-trivial $G$, transversality does not extend to the equivariant case. Therefore, we do not obtain an isomorphism between the rings defined geometrically and the homotopy groups of the equivariant Thom spectra.

We will concentrate on equivariant complex bordism with respect to the torus $T^k\letbe (S^1)^k$, for some $k\geq 1$. This has been proven to be important thanks to the work of tom Dieck~\cite{tD70} and L\"offler~\cite{L73} who show that the Pontryagin-Thom map above is injective in this case. Also $MU_A$, for a compact abelian Lie group $A$, is known to be universal among $A$-equivariant complex-orientable theories; see~\cite{CGK02}.

Goresky, Kottwitz and MacPherson~\cite{GKM98} studied a particular class of almost complex manifolds with effective torus actions. They associated a labelled graph to each manifold using its fixed point data and showed that this combinatorial data was enough to calculate the equivariant cohomology of the manifold. Rosu~\cite{R03} was also able to give a description, using this combinatorial data, of the equivariant $K$-theory (with complex coefficients) of the manifold. Our aim is to show that this data is also what we need to determine the bordism class of the manifold although we will widen the class of manifolds, and consequently alter the labelled graphs, in question by looking at \emph{stably} complex manifolds with effective torus actions.

In this paper we will only be concerned with the case of half-dimensional torus actions, that is, even-dimensional manifolds $M^{2n}$ with effective $T^n$-actions. This will imply that the only fixed points that appear are isolated fixed points. The GKM-theory of these particular manifolds was studied in depth in~\cite{MMP07} where the graphs were given the name \emph{torus graphs}. We will show how we can find the equivariant versions of characteristic numbers from the torus graphs and classify those that can appear.

A special class of such manifolds are known as omnioriented quasitoric manifolds (see Section~\ref{OQTM}). They were first defined in~\cite{DJ91} as topological versions of toric varieties and are central objects in the study of the topology of torus actions. Their attraction is the ability to completely encode them in terms of combinatorial data. Buchstaber and Ray in~\cite{BR98} prove that every (non-equivariant) complex bordism class of dimension~$>2$ contains an omnioriented quasitoric manifold. This paper is the beginning of an attempt to to prove the equivariant version of this theorem, that is, that an omnioriented quasitoric manifold is contained in every equivariant complex bordism class with suitable dimensions.

There are real versions of quasitoric manifolds known as \emph{small covers}, where we have nice $(\Z/2)^n$-actions on manifolds $M^n$. L\"u and Tan in~\cite{LT13} show that a small cover is contained in every $(\Z/2)^n$-equivariant unoriented bordism class of dimension $n$. This paper could then be considered as an attempt at a complex version of their paper and we extend many of their techniques to this case.

We begin in Section~\ref{SCTM} by looking at manifolds $M^{2n}$ with an effective $T^n$-action. These are known as \emph{torus manifolds} and we are interested in those that admit a stably complex structure. We then consider \emph{torus graphs}, first discussed in~\cite{MMP07}, which are combinatorial objects associated to torus manifolds; oriented versions are associated to those torus manifolds with a stably complex structure. We assign to these an exterior polynomial, and define a boundary operator that completely characterises such polynomials.

In Section~\ref{ECB} we give an introduction to geometric equivariant complex bordism and the technique of restricting to fixed point data. We then consider the case of bordism classes that contain manifolds with only isolated fixed points and study the \emph{universal toric genus} of~\cite{BPR10} and the pullback squares of Hanke~\cite{Han05} to obtain a fixed point formula.

Section~\ref{KTHY} brings the previous material together to show that every exterior polynomial coming from an oriented torus graph is the fixed point data for some equivariant bordism class. This gives our restricted bordism ring a completely combinatorial structure and the exterior polynomials may be thought of as equivariant characteristic `numbers'.

In Section~\ref{OQTM} we give a brief introduction to omnioriented quasitoric manifolds and define a ring of \emph{quasitoric pairs} that completely encodes omnioriented quasitoric manifolds. We then consider whether this particular class of stably complex torus manifolds could generate all the bordism classes, give an affirmative answer in low dimensions and discuss problems that arise in higher dimensions.

\subsection{Preliminaries}

Let $G$ be a compact Lie group. For a good exposition on equivariant algebraic topology, including the definitions of $G$-bundle, $G$-homotopy etc., we refer the reader to the classical text~\cite{May96}. Suppose $\xi$ is a real $2n$-plane $G$-bundle.

\begin{definition}
A \emph{complex $G$-structure on $\xi$} is a $G$-map
\[
J\colon E(\xi)\longrightarrow E(\xi),
\]
satisfying $J^2=-1$. Two complex structures are \emph{equivalent} if they are $G$-homotopic through complex $G$-structures.
\end{definition}

Note that equivalence classes of complex $G$-structures are in one-to-one correspondence with $G$-homotopy classes of bundle maps $\xi\to \gamma_G^n$, where
\[
\gamma_G^n\colon EU_G(n)\longrightarrow BU_G(n)
\]
is the universal complex $n$-plane $G$-bundle which we regard as a real $2n$-plane $G$-bundle.

\begin{definition}\label{D:tsc}
A \emph{tangentially stably complex $G$-manifold} $M$ is a compact $G$-manifold with a complex $G$-structure on
\[
\tau(M)\oplus \R^k,
\]
for some $k$, where the trivial bundle $\R^k$ has trivial $G$-action. Two such structures are defined to be equivalent, if after stabilisation with further $\C$-summands (equipped with trivial $G$-actions on the fibres), the induced complex $G$-structures are equivalent.
\end{definition}

An important property of these objects that we will use is given in the following well-known proposition:

\begin{proposition}[See~\cite{Com96}]\label{P:tsc}
Suppose $M$ is a tangentially stably complex $G$-manifold. Then each component of the fixed point set $M^G$ is a tangentially stably complex $G$-manifold and its normal bundle in $M$ is a complex $G$-bundle.
\end{proposition}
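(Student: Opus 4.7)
The plan is to use $G$-equivariance of the complex structure $J$ together with the standard averaging trick for compact Lie groups to split $J$ along the decomposition of the stable tangent bundle over a fixed point component. Let $F$ be a connected component of $M^G$. The restriction of $\tau(M) \oplus \R^k$ to $F$ carries a $G$-action fixing the base pointwise, and at each $p \in F$ the fiber $T_pM \oplus \R^k$ is a $G$-representation whose trivial isotypical summand is exactly $T_pF \oplus \R^k$. The first step is to globalise this fibrewise observation into a $G$-equivariant bundle splitting
\[
\tau(M)|_F \oplus \R^k \;=\; \bigl(\tau(F) \oplus \R^k\bigr) \,\oplus\, \nu(F, M),
\]
produced via the averaging projector $P(v) \letbe \int_G g\cdot v\,dg$. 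This is an idempotent $G$-bundle endomorphism whose image is the $G$-fixed subbundle $\tau(F) \oplus \R^k$ and whose kernel provides a canonical $G$-invariant complement, which must therefore coincide with the normal bundle $\nu(F, M)$.

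Since $J$ is $G$-equivariant, it commutes with $P$, so $J \circ P = P \circ J$, and hence $J$ preserves both $\mathrm{im}(P) = \tau(F) \oplus \R^k$ and $\ker(P) = \nu(F, M)$. Consequently $J$ restricts to complex structures on each summand. The restriction to $\nu(F, M)$ remains $G$-equivariant, so the normal bundle acquires the structure of a complex $G$-bundle, as required. The restriction to $\tau(F) \oplus \R^k$ satisfies the hypotheses of Definition~\ref{D:tsc}, since both $\tau(F)$ (because $F \subset M^G$) and $\R^k$ (by hypothesis) carry trivial $G$-actions, and hence endows $F$ with a tangentially stably complex $G$-structure.

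There is no serious obstacle in the core argument, which is essentially formal once one has equivariance of $J$ and Haar averaging. The point that warrants some care is checking that the resulting tangentially stably complex structure on $F$ is well-defined up to the equivalence in Definition~\ref{D:tsc}. Since the averaging projector is canonical and intrinsic to the $G$-action, no auxiliary data enters the construction; a $G$-homotopy of complex $G$-structures on $\tau(M) \oplus \R^k$ restricts via $P$ to a $G$-homotopy on $\tau(F) \oplus \R^k$ after suitable stabilisation, so equivalent stably complex $G$-structures on $M$ induce equivalent ones on $F$.
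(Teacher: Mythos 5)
Your proof is correct and follows essentially the same route as the paper: restrict the stably complexified tangent bundle to a fixed component, identify the $G$-fixed subbundle as $\tau(F)\oplus\R^k$ and its invariant complement as $\nu(F,M)$, and use $G$-equivariance of $J$ to see that both summands are $J$-invariant. The paper states this more tersely, writing $\zeta^G$ and $(\zeta^G)^\perp$ and leaving the Haar-averaging splitting and the commutation $JP=PJ$ implicit; your version simply spells out those steps.
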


\begin{proof}
Suppose we have a complex $G$-structure on $\tau(M)\oplus \R^k$, for some $k$. Observe that $\tau(M^G)=(\tau(M)|_{M^G})^G$ as real vector bundles. Then
\[
\zeta:=(\tau(M)\oplus \R^k)|_{M^G}=\tau(M)|_{M^G}\oplus \R^k=\tau(M^G)\oplus \nu(M^G,M) \oplus \R^k.
\]
Since $\zeta^G=\tau(M^G)\oplus \R^k$, we have that $(\zeta^G)^{\perp}=\nu(M^G,M)$ is a complex $G$-bundle.
\end{proof}

\begin{remark}
If we were to stabilise the tangent bundle with respect to \emph{all} complex $G$-representations, as opposed to just the trivial ones, then Proposition~\ref{P:tsc} would not hold; we would only be able to say that the normal bundles to fixed sets would be \emph{stably} complex. The same would be true if we were to define a theory using the stable normal bundle instead of the stable tangent bundle.
\end{remark}

We will use the following important fact about $G$-bundles over a trivial $G$-space:

\begin{proposition}[{\cite[Proposition 2.2]{Seg68}}]\label{P:segal}
Let $\xi$ be a complex $G$-bundle over a compact space $X$ that has trivial $G$-action. Then $\xi$ decomposes as the direct sum
\[
\xi\cong \bigoplus_{V} \xi_V,
\]
where $V$ runs over irreducible $G$-representations and $\xi_V\cong \tilde{\xi}\otimes V$ for some non-equivariant complex vector bundle $\tilde{\xi}$ over $X$.
\end{proposition}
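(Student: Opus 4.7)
The plan is to mimic, in the bundle setting, the classical isotypic decomposition of a representation of a compact group. For each irreducible complex $G$-representation $V$, with character $\chi_V$, I would form the fibrewise operator
\[
p_V = \frac{\dim V}{|G|}\int_G \overline{\chi_V(g)}\,\rho(g)\,dg,
\]
where $\rho(g)$ denotes the action of $g\in G$ on the fibres of $\xi$ and $dg$ is normalised Haar measure. Since the action on $E(\xi)$ is continuous, $p_V$ is a continuous $G$-equivariant bundle endomorphism of $\xi$, and on each fibre it is the familiar idempotent projection onto the $V$-isotypic summand.

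Set $\xi_V := \operatorname{im} p_V$. The first main step is to check that $\xi_V$ is a complex $G$-subbundle of $\xi$. Averaging any Hermitian metric over $G$ produces a $G$-invariant Hermitian metric, with respect to which $p_V$ is self-adjoint; its rank at the fibre over $x$ equals $\dim V \cdot \langle \chi_{\xi_x},\chi_V\rangle$, an integer-valued continuous function of $x$, hence locally constant on $X$. A continuous family of orthogonal projections of locally constant rank has locally trivial image, so $\xi_V$ is a subbundle, and by compactness of $X$ only finitely many $V$ contribute nontrivially. The fibrewise isotypic decomposition then assembles into an isomorphism $\xi \cong \bigoplus_V \xi_V$ of complex $G$-bundles.

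To obtain the factorisation $\xi_V \cong \tilde{\xi}\otimes V$, I would set $\tilde{\xi} := \Hom_G(\underline{V},\xi_V)$, the bundle of $G$-equivariant homomorphisms from the trivial bundle $\underline{V} = X\times V$ into $\xi_V$. Because these homomorphisms are $G$-invariant, $G$ acts trivially on $\tilde{\xi}$, so it is essentially a non-equivariant complex vector bundle on $X$. The evaluation map $\tilde{\xi}\otimes \underline{V} \longrightarrow \xi_V$ is a morphism of complex $G$-bundles and, on each fibre, is the canonical isomorphism $\Hom_G(V,W_V)\otimes V \cong W_V$ between the $V$-isotypic component $W_V$ of a representation $W$ and the tensor product; so it is an isomorphism globally.

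The main technical obstacle is the subbundle claim: verifying rigorously that the continuous idempotent section $p_V$ of $\operatorname{End}(\xi)$, of locally constant rank, has image that is locally trivial. This is where one passes from fibrewise linear algebra to genuine bundle data; once it is in hand, the remainder of the argument is a fibrewise restatement of Peter--Weyl plus the formal adjunction between $\underline{V}$ and $\Hom_G(\underline{V},-)$.
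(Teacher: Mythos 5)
The paper does not prove this statement itself; it simply cites Segal's Proposition 2.2, and your argument is a correct reconstruction of Segal's standard isotypic-decomposition proof (fibrewise character projections, local constancy of rank for a continuous family of idempotents, then the evaluation isomorphism $\Hom_G(\underline{V},\xi_V)\otimes\underline{V}\to\xi_V$). One small slip worth fixing: since $G$ here is compact (e.g.\ $T^n$) and you already stipulate normalised Haar measure, the factor $1/|G|$ in the definition of $p_V$ should be dropped, as the normalisation absorbs it; this does not affect the rest of the argument.
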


If a tangentially stably complex $G$-manifold $M$ has an isolated fixed point $p$, then, by Proposition~\ref{P:tsc}, $M$ is even dimensional and the tangent space $T_pM$ is a complex $G$-representation. Since $M$ has a canonical orientation coming from its stably complex structure, this induces an orientation on $T_pM$. Therefore $T_pM$ has two orientations: one coming from its complex structure, and the other from the orientation of $M$.

We define the \emph{sign} of an isolated fixed point as in~\cite{Mas99,BPR07}:

\begin{definition}\label{D:sign}
For each isolated fixed point $p$ of a tangentially stably complex $G$-manifold, the \emph{sign of $p$} is given by
\[
\sigma(p)\letbe
\begin{cases}
+1, &\text{if the two orientations coincide};\\
-1, &\text{if the two orientations differ}.
\end{cases}
\]
\end{definition}

Note that $\sigma(p)$ agrees with the induced orientation on $p$ as a stably complex manifold in its own right and that for an almost complex manifold $\sigma(p)=+1$ for all isolated fixed points $p\in M$.

\subsubsection*{Acknowledgments}

This work comes from part of my PhD thesis~\cite{D13} and I would like to thank my doctoral supervisor Nigel Ray for all the help, comments and confidence that he has given throughout my time spent in Manchester.

I would also like to thank Neil Strickland for his many helpful comments on this project.

\section{Stably Complex Torus Manifolds}\label{SCTM}

In this section we study what we call \emph{stably complex torus manifolds}. Masuda studied these in~\cite{Mas99} in which he called them ``unitary toric manifolds''. We give them a different name to avoid confusion with compact non-singular toric varieties and quasitoric manifolds, which have been called toric manifolds in the literature. Much of this section may be considered as an extension of the ideas of L\"u and Tan~\cite[\S 3,4]{LT13} to the complex case.

\begin{definition}
A \emph{torus manifold} is a $2n$-dimensional smooth compact manifold $M$ with an effective smooth $T^n$-action whose fixed point set is non-empty.
\end{definition}

Note that for a manifold $M^{2n}$ with an effective $T^n$-action the fixed point set $M^{T^n}$ is necessarily finite. So for a torus manifold there are only isolated fixed points.

\begin{definition}
A \emph{stably complex torus manifold} $M^{2n}$ is a tangentially stably complex $T^n$-manifold whose $T^n$-action also satisfies the conditions of being a torus manifold, that is, the $T^n$-action is effective and has a non-empty fixed point set.
\end{definition}

Following the work in~\cite{Mas99} we define a \emph{characteristic submanifold} of a stably complex torus manifold to be a closed connected codimension 2 submanifold that is a fixed point set component of a certain circle subgroup of $T^n$ that contains at least one $T^n$-fixed point. Compactness ensures that for a stably complex torus manifold $M$ there are only a finite number of characteristic submanifolds. Let $M_i$ be the characteristic submanifolds of $M$, $\nu_i$ be their normal bundles in $M$ and let $T_i$ be the circle subgroup which fixes $M_i$ pointwise. For $p\in M^{T^n}$ we set
\[
I(p)\letbe \{ i\mid p\in M_i\}.
\]
Then
\begin{equation}\label{E:TpM}
T_pM\cong\bigoplus_{i\in I(p)}\nu_i|_p
\end{equation}
as complex $T^n$-modules. This shows that $I(p)$ consists of exactly $n$ elements. Let $V_i(p)\in \Hom(T^n,S^1)\cong \Z^n$ be the irreducible complex $T^n$-representation associated to each $\nu_i|_p$. Using the fact that the $T^n$-action is effective Masuda obtained the following result:

\begin{lemma}[{\cite[Lemma 1.3]{Mas99}}]\label{L:Mas}
The following conditions hold for each $p\in M^{T^n}$:
\begin{enumerate}
\item The set $\{ V_i(p)\mid i\in I(p)\}$ forms a basis of $\Hom(T^n,S^1)$.
\item Let $j\in I(p)$. Then $\text{Res}_{T_j}(V_i(p))\neq 0$ if and only if $j=i$, where $\text{Res}_{T_j}$ denotes the restriction map from $\Hom(T^n,S^1)$ to $\Hom(T_j,S^1)$.
\item If $p$ and $q$ are points in $M_i$, the $\text{Res}_{T_i}(V_i(p))=\text{Res}_{T_i}(V_i(q))$.
\end{enumerate}
\end{lemma}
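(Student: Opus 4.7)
The plan is to handle the three parts separately, using effectiveness for (1), the codimension of $M_j$ for (2), and Segal's decomposition (Proposition~\ref{P:segal}) for (3).

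For (1), I would argue that the representation of $T^n$ on $T_pM$ must be faithful. Let $K\subseteq T^n$ be the kernel. If $\dim K>0$, then by the equivariant tubular neighborhood theorem the identity component $K^0$ fixes a neighbourhood of $p$; since the fixed point set $M^{K^0}$ is both closed and, by assumption, contains an open set of the connected manifold $M$, we would get $M^{K^0}=M$, contradicting effectiveness. If instead $K$ is a nontrivial finite subgroup, the same equivariant tubular neighbourhood argument shows $K$ fixes a neighbourhood of $p$, again contradicting effectiveness. Hence the $n$ characters $\{V_i(p):i\in I(p)\}$ generate $\Hom(T^n,S^1)\cong\Z^n$ as a group; since any generating set of $\Z^n$ of cardinality $n$ is a basis (the corresponding integer matrix has determinant $\pm 1$), (1) follows.

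For (2), the key observation is that $M_j$ is a connected component of $M^{T_j}$, so at any $p\in M_j$ one has $T_pM_j=(T_pM)^{T_j}$, which has real dimension $2n-2$. Decomposing $T_pM=\bigoplus_{i\in I(p)}\nu_i|_p$ as in \eqref{E:TpM}, the $T_j$-fixed subspace equals $\bigoplus_{i:\,V_i(p)|_{T_j}=0}\nu_i|_p$. Now $\nu_j|_p$, being the normal fibre, is \emph{not} fixed by $T_j$ (otherwise $M_j$ would have smaller codimension), so $V_j(p)|_{T_j}\neq 0$. Counting real dimensions then forces the remaining $n-1$ weights $V_i(p)$ with $i\in I(p)\setminus\{j\}$ to restrict trivially to $T_j$, proving (2).

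For (3), apply Proposition~\ref{P:segal} to the complex $T_i$-bundle $\nu_i|_{M_i}$ over the trivial $T_i$-space $M_i$. Since $\nu_i$ has complex rank one, the decomposition $\nu_i|_{M_i}\cong\bigoplus_V \tilde{\xi}_V\otimes V$ collapses to a single summand $\tilde{\nu}_i\otimes V_0$ for one irreducible $T_i$-representation $V_0$. Consequently $T_i$ acts on every fibre $\nu_i|_p$ with $p\in M_i$ by the same character $V_0$, which is precisely $\text{Res}_{T_i}(V_i(p))$, giving (3).

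The main obstacle is the rigorous propagation step in (1): one must be careful that local triviality of the tangential action near $p$ really forces global triviality on the connected torus manifold. Once this foundational point is handled, parts (2) and (3) are structural consequences of the definitions and of Segal's theorem.
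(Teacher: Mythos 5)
The paper does not prove this lemma; it quotes it from Masuda~\cite{Mas99}, so there is no internal argument to compare against, but your reconstruction is correct and almost certainly the intended route. One comment on (1): your worry about ``rigorous propagation'' is unfounded, and the case split on $K$ is unnecessary. Whatever the kernel $K$ of the $T^n$-representation on $T_pM$ is, the slice theorem linearizes the action near $p$, so $K$ fixes a neighbourhood of $p$ pointwise; $M^K$ is then a closed smooth submanifold with nonempty interior, hence has a full-dimensional (therefore open) component, hence equals the connected $M$, and effectiveness forces $K=\{1\}$. Triviality of this kernel is exactly unimodularity of the integer matrix $[V_1(p)\,\cdots\,V_n(p)]$, which is the assertion; your detour through ``generate'' is the same determinant computation in words. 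Parts (2) and (3) are clean as written: (2) is a dimension count on $T_pM_j=(T_pM)^{T_j}$ using that the normal fibre $\nu_j|_p$ is complementary to it inside $T_pM$; (3) applies Segal's decomposition to the rank-one complex $T_i$-bundle $\nu_i$ over the compact base $M_i$ on which $T_i$ acts trivially, and the \emph{connectedness} of $M_i$ (part of the definition of a characteristic submanifold) is what forces a single character across the whole bundle rather than one per component.
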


We now consider combinatorial objects called torus graphs and look at the torus graphs that are obtained from stably complex torus manifolds. We then go on to define an exterior polynomial associated to each such torus graph and a boundary operator which characterises those exterior polynomials that arise in this fashion.

\subsection{Torus Graphs}

Torus graphs were first introduced in~\cite{MMP07} as combinatorial objects used to study simplicial posets and to calculate the equivariant cohomology of equivariantly formal torus manifolds by analogy with the way that the GKM graphs of~\cite{GKM98} and~\cite{GZ99} deal with GKM manifolds. L\"u and Tan study a mod 2 version of these torus graphs in~\cite{LT13}.

Suppose $\Gamma$ is an $n$-valent connected graph with $n\geq 1$. Let $\mathcal{V}(\Gamma)$ denote the set of vertices of $\Gamma$ and $\mathcal{E}(\Gamma)$ the set of oriented edges of $\Gamma$, that is, each edge of $\Gamma$ appears twice in $\mathcal{E}(\Gamma)$---once for each possible orientation. Let $i(e)$ and $t(e)$ be the initial and terminal vertices of an edge $e\in \mathcal{E}(\Gamma)$ respectively, and $\bar{e}$ be the edge $e$ with its opposite orientation. For $p\in \mathcal{V}(\Gamma)$ set
\[
\mathcal{E}(\Gamma)_p\letbe\{e\in \mathcal{E}(\Gamma)\mid i(e)=p \}.
\]

\begin{definition}\label{D:taf}
A \emph{torus axial function} is a map
\[
\alpha\colon \mathcal{E}(\Gamma)\longrightarrow \Hom(T^n,S^1)\cong \Z^n,
\]
satisfying the following conditions:
\begin{enumerate}
\item $\alpha(\bar{e})=\pm \alpha(e)$;
\item elements of $$\alpha(\mathcal{E}(\Gamma)_p)\letbe \{ \alpha(e)\in \Hom(T^n,S^1)\mid i(e)=p\}$$ form a basis of $\Z^n$;
\item $\alpha(\mathcal{E}(\Gamma)_{t(e)})\equiv \alpha (\mathcal{E}(\Gamma)_{i(e)})\ \text{mod}\ \alpha(e)$, for any $e\in \mathcal{E}(\Gamma)$. \label{3}
\end{enumerate}
\end{definition}

\begin{remark}
Note that a torus axial function is different from the axial function for GKM-graphs as defined in~\cite{GZ99}, which requires that $\alpha(\bar{e})=-\alpha(e)$ as well as that the elements of $\alpha(\mathcal{E}(\Gamma)_p)$ need only be pairwise linearly independent.
\end{remark}

\begin{definition}
A \emph{torus graph} is a pair $(\Gamma,\alpha)$ consisting of an $n$-valent graph $\Gamma$ with a torus axial function $\alpha$.
\end{definition}

\begin{example}[{\cite[p.463]{MMP07}}]
Let $M^{2n}$ be a torus manifold. Let $\Sigma_M$ denote the set of 2-dimensional submanifolds of $M$ each of which is fixed pointwise by a codimension one subtorus of $T^n$. Then every $S\in \Sigma_M$ is diffeomorphic to a sphere, contains exactly two $T^n$-fixed points, and is a connected component of the intersection of some $n-1$ characteristic submanifolds. Define a regular $n$-valent graph $\Gamma_M$ whose vertex set is $M^{T^n}$ and identify the edges of $\Gamma_M$ with the spheres from $\Sigma_M$. The summands in~\eqref{E:TpM} correspond to the oriented edges of $\Gamma_M$ having $p$ as the initial point. We assign $V_i(p)$ to the oriented edge corresponding to $\nu_i|_p$. This gives a function
\[
\alpha_M\colon \mathcal{E}(\Gamma_M)\longrightarrow \text{Hom}(T^n,S^1).
\]
It is not difficult to see that $\alpha_M$ satisfies the three conditions of being a torus axial function.
\end{example}

\begin{example}[{\cite[p.463]{MMP07}}]\label{E:sphere}
By suspending the coordinatewise $T^n$-action on $S^{2n-1}$, for $n>1$, we obtain the torus manifold $S^{2n}$. The torus graphs of these torus manifolds have only two vertices and the axial function assigns the basis elements $t_1,\dots,t_n\in \Hom (T^n,S^1)$ to the $n$ edges regardless of the orientation due to the condition given by Definition~\ref{D:taf}(3). We draw the graph below in the case that $n=3$.
\[
\begin{tikzpicture}
\vertex (v) at (0,0){};
\vertex (w) at (3,0){};
\path
(v) edge[bend right=5] node[below]{$t_2$} (w)
(v) edge[bend left=60] node[above]{$t_1$} (w)
(v) edge[bend right=60] node[below]{$t_3$} (w)
;
\end{tikzpicture}
\]
Note that these are not GKM-graphs as in~\cite{GZ99} as the condition $\alpha(\bar{e})=-\alpha(e)$ is not satisfied.
\end{example}

\begin{definition}\label{D:o}
An \emph{orientation} of a torus graph $(\Gamma,\alpha)$ is an assignment
\[
\sigma\colon \mathcal{V}(\Gamma)\longrightarrow \{ \pm 1\},\]
satisfying
\[
\sigma (i(e))\alpha (e)=-\sigma (i(\bar{e}))\alpha (\bar{e}),\quad \text{for every}\ e\in \mathcal{E}(\Gamma).
\]
\end{definition}

\begin{example}[{\cite[p.476]{MMP07}}]\hfill
\begin{enumerate}
\item Not all torus graphs are orientable. Take the complete graph on four vertices $v_1,v_2,v_3,v_4$. Choose a basis $t_1,t_2,t_3\in \Hom (T^3,S^1)$ and define an axial function by setting
\[
\alpha(v_1v_2)=\alpha(v_3v_4)=t_1,\quad \alpha(v_1v_3)=\alpha(v_2v_4)=t_2,\quad \alpha(v_1v_4)=\alpha(v_2v_3)=t_3,
\]
and $\alpha(\bar{e})=\alpha(e)$ for any oriented edge $e$. A direct check shows that this torus graph is not orientable.
\item A $T^n$-invariant almost complex structure on a torus manifold $M^{2n}$ induces orientations on $M$ and on its characteristic submanifolds. The associated torus axial function satisfies $\alpha_M(\bar{e})=-\alpha_M(e)$ for any oriented edge $e$. In this case we may take $\sigma(p)=+1$ for every $p\in V(\Gamma_M)$.
\end{enumerate}
\end{example}

\begin{proposition}
The torus graph of a stably complex torus manifold is orientable.
\end{proposition}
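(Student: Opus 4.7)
My plan is to define $\sigma \colon \mathcal{V}(\Gamma_M) \to \{\pm 1\}$ by taking $\sigma(p)$ to be the sign of $p$ as an isolated fixed point of the stably complex torus manifold $M$, in the sense of Definition~\ref{D:sign}. Given an oriented edge $e \in \mathcal{E}(\Gamma_M)$ with $p = i(e)$ and $q = i(\bar{e})$, the task reduces to verifying the identity
\[
\sigma(p)\,\alpha_M(e) \;=\; -\sigma(q)\,\alpha_M(\bar{e}).
\]

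The key device is the invariant 2-sphere $S_e \in \Sigma_M$ associated to $e$, which contains $p$ and $q$ as its only $T^n$-fixed points and is fixed pointwise by a codimension-one subtorus $T_e \subset T^n$ with residual action by $T^n/T_e \cong T^1$. First I would show that $S_e$ inherits the structure of a tangentially stably complex $T^n$-manifold (on which $T_e$ acts trivially) from $M$. Indeed, $S_e$ is a connected component of $M^{T_e}$, so Proposition~\ref{P:tsc} applied with $G = T_e$ equips $S_e$ with a tangentially stably complex structure and makes its normal bundle $\nu(S_e,M)$ a complex $T_e$-bundle; because the full $T^n$-action preserves $S_e$ and commutes with $T_e$, the complex structure on $\nu(S_e,M)$ is $T^n$-equivariant and agrees at each fixed point with the irreducible summand decomposition of $T_pM$ in~\eqref{E:TpM}.

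Next I would track orientations. At the fixed point $p$ the splitting $T_pM = T_pS_e \oplus \nu(S_e,M)|_p$ is a splitting of complex $T^n$-representations, so the normal summand carries a preferred complex orientation. The stably complex structure on $M$ orients $T_pM$ and, by definition of $\sigma(p)$, this orientation differs from the complex orientation of $T_pM$ by $\sigma(p)$; dividing out by the complex orientation of the normal summand shows that the restricted orientation on $T_pS_e$ differs from its complex orientation by the same sign $\sigma(p)$. Exactly the same analysis at $q$ gives the sign $\sigma(q)$. Thus the signed weights $\sigma(p)\,\alpha_M(e)$ and $\sigma(q)\,\alpha_M(\bar{e})$ are precisely the tangential characters of $S_e$ at its two fixed points, computed with respect to the orientation that $S_e$ inherits from $M$. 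The proof then concludes by appealing to the elementary fact that on an oriented 2-sphere with an effective $T^1$-action the two tangent characters at the fixed points are negatives of each other.

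The main obstacle is the orientation bookkeeping in the middle step: one must be careful that the complex $T^n$-structure supplied by Proposition~\ref{P:tsc} on $\nu(S_e,M)$ is genuinely the one appearing as the direct sum of the $\nu_i|_p$ with $i \neq i_n$ in~\eqref{E:TpM}, so that the sign $\sigma(p)$ computed in $M$ coincides with the sign of $p$ viewed as a fixed point of the induced stably complex $T^1$-manifold $S_e$. Once this identification is in place, the desired identity is immediate from the model $S^2$ computation.
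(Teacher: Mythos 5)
Your proposal is correct and takes the same starting point as the paper: define $\sigma(p)$ to be the sign of $p$ from Definition~\ref{D:sign}. The paper's proof stops there and simply asserts that the orientability condition of Definition~\ref{D:o} is then satisfied, whereas you carry out the verification via the invariant $2$-sphere $S_e$ and the orientation bookkeeping of Proposition~\ref{P:tsc}; this is a correct and careful completion of the step the paper leaves tacit.
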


\begin{proof}
Any stably complex torus manifold $M$ is a tangentially stably complex $T^n$-manifold so set $\sigma(p)$, for $p\in M^{T^n}=\mathcal{V}(\Gamma_M)$, to agree with the definition of the sign of an isolated fixed point of a tangentially stably complex $T^n$-manifold as in~Definition~\ref{D:sign}. The condition for the torus graph to be orientable is then satisfied.
\end{proof}

\begin{example}\label{E:sphere2}
The torus graph of the torus manifold $S^{2n}$, of Example~\ref{E:sphere}, can be oriented by giving opposite signs to the two fixed points.
\end{example}

\subsection{Torus Polynomials}

We begin by defining the \emph{free exterior algebra}, on a set $S$, over a commutative ring $R$. This can be thought of as the non-commutative analogue of a polynomial ring.

\begin{definition}
The \emph{free $R$-algebra} $R\langle S\rangle$, on $S$, is the free $R$-module with basis consisting of all finite words over $S$. This becomes an $R$-algebra by defining multiplication as concatenation of basis elements.
\end{definition}

Let $S=\{ X_i\mid i\in I\}$. Then it is clear that each element of $R\langle S\rangle$ can be uniquely written in the form:
\begin{equation}\label{eq:sum}
\sum_{i_1,\dots,i_k\in I} a_{i_1,\dots,i_k} X_{i_1}\cdots X_{i_k},
\end{equation}
where $a_{i_1,\dots,i_k}$ are elements of $R$, of which all but finitely many are non-zero. Unlike an actual polynomial ring the variables do not commute, i.e.\ $X_1X_2$ does not equal $X_2X_1$.

\begin{definition}
The \emph{free exterior $R$-algebra} $\Lambda(S)$ on $S$ is the quotient algebra
\[
\Lambda(S)\letbe R\langle S\rangle/I,
\]
where $I$ is the ideal generated by all words of the form $X_iX_i$ and expressions of the form $X_iX_j+X_jX_i$, for $X_i,X_j\in S$. The exterior product $\wedge$ of two elements is defined as $X_i\wedge X_j=X_iX_j\mod I$.
\end{definition}

The \emph{$k^{\text{th}}$ exterior power}, denoted $\Lambda^k(S)$, is the subspace of $\Lambda(S)$ spanned by elements of the form
\[
X_1\wedge X_2\wedge\dots\wedge X_k
\]
and we obtain a graded structure:
\[
\Lambda(S)=\Lambda^0(S)\oplus \Lambda^1(S)\oplus\cdots,
\]
where $(\Lambda^k(S))\wedge (\Lambda^p(S))\subset \Lambda^{k+p}(S)$.

We will always refer to each element in $\Lambda(S)$ in terms of a representative in $R\langle S\rangle$ and assume that it is written as a sum of basis elements as in~\eqref{eq:sum}.

\begin{definition}\label{D:J}
Let $J_n$ denote the set of non-trivial irreducible $T^n$-representations, that is, non-trivial elements of $\Hom (T^n,S^1)$.
\end{definition}

We have that $J_n\cong \Z^n\smallsetminus \{0\}$. Consider the free exterior algebra $\Lambda(J_n)$, over $\Z$.

\begin{definition}
We call an exterior polynomial in $\Lambda^n(J_n)$ \emph{faithful} if the indeterminates from each monomial form a basis of $\Z^n$.
\end{definition}

Suppose $(\Gamma,\alpha,\sigma)$ is an oriented torus graph. For a vertex $p$ we order the basis elements
\[
\alpha(\mathcal{E}_p)=\{ \alpha(e_1),\dots,\alpha(e_n)\}
\]
so that
\begin{equation}\label{E:tp}
\det [\alpha(e_1)\cdots\alpha(e_n)]=\sigma(p),
\end{equation}
where $[\alpha(e_1)\cdots\alpha(e_n)]$ is the integral $(n\times n)$-matrix formed by taking the $i^{\text{th}}$ column to be the vector $\alpha(e_i)\in\Z^n$. This defines a faithful exterior monomial
\[
\mu_p=\alpha(e_1)\wedge\dots\wedge \alpha(e_n)\in \Lambda^n(J_n),
\]
for every vertex $p$ of $\Gamma$. Note that this monomial is independent of the ordering chosen above.

\begin{remark}
In the case $n=1$ there is only one ordering of $\alpha(\mathcal{E}_p)$ since it contains a single element $\alpha(e)$. In this case we define
\[ \mu_p\letbe
\begin{cases}
+\alpha(e), &\text{if}\ \det[\alpha(e)]=\sigma(p);\\
-\alpha(e), &\text{if}\ \det[\alpha(e)]\ne\sigma(p).
\end{cases} \]
\end{remark}

\begin{definition}
The \emph{torus polynomial} of an oriented torus graph $(\Gamma, \alpha)$ is defined to be the faithful exterior polynomial
\[
g{(\Gamma,\alpha)}\letbe \sum_{p\in \mathcal{V}(\Gamma)} \mu_p\in \Lambda^n(J_n).
\]
\end{definition}

\begin{example}
Combining Examples~\ref{E:sphere} and~\ref{E:sphere2} we see that the spheres $S^{2n}$ have zero torus polynomial because the two monomials coming from the two vertices cancel as they have opposite sign.
\end{example}

We define an equivalence relation on the set of oriented torus graphs by saying that for two torus graphs $(\Gamma_1,\alpha_1)$ and $(\Gamma_2,\alpha_2)$ are equivalent if and only if $g{(\Gamma_1,\alpha_1)}=g{(\Gamma_2,\alpha_2)}$, and denote the set of equivalence classes by $G(n)$. On $G(n)$ we define addition to be disjoint union and so form an abelian group where the zero element is given by the class of any torus graph with zero torus polynomial, or equivalently, the empty graph.

We then have a monomorphism of abelian groups
\begin{equation}\label{e:g}
g\colon G(n)\longrightarrow \Lambda^n(J_n)
\end{equation}
given by taking the torus polynomial of an oriented torus graph.

\begin{definition}
An oriented torus graph $(\Gamma,\alpha)$ with non-zero torus polynomial is said to be \emph{prime} if the number of monomials in $g{(\Gamma,\alpha)}$ is equal to the number of vertices of $\Gamma$.
\end{definition}

\begin{lemma}
Each non-zero class of $G(n)$ contains a prime torus graph as its representative.
\end{lemma}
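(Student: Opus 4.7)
The plan is to argue by minimality: take a representative $(\Gamma,\alpha,\sigma)$ of the given non-zero class in $G(n)$ with the fewest vertices (allowing disjoint unions of oriented torus graphs, since addition in $G(n)$ is disjoint union), and show this representative must be prime. Equivalently, I will assume it contains a cancelling pair of distinct vertices $p,q$ with $\mu_p=-\mu_q$ and construct another representative of the same class with two fewer vertices, contradicting minimality.

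The key observation is that, because $\mu_v$ is the wedge of the elements of $\alpha(\mathcal{E}(\Gamma)_v)$ in an ordering whose overall sign is fixed by $\sigma(v)$, two such monomials satisfy $\mu_p=\pm\mu_q$ in $\Lambda^n(J_n)$ exactly when the underlying axial sets $\alpha(\mathcal{E}(\Gamma)_p)$ and $\alpha(\mathcal{E}(\Gamma)_q)$ coincide as subsets of $J_n$; in the cancelling case one moreover has $\sigma(p)=-\sigma(q)$. I can therefore label $\alpha(\mathcal{E}(\Gamma)_p)=\alpha(\mathcal{E}(\Gamma)_q)=\{v_1,\dots,v_n\}$ and match edges $e_i^p\in\mathcal{E}(\Gamma)_p$ with $e_i^q\in\mathcal{E}(\Gamma)_q$ by $\alpha(e_i^p)=\alpha(e_i^q)=v_i$, setting $r_i=t(e_i^p)$ and $s_i=t(e_i^q)$. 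The surgery then forms $\Gamma'$ by deleting $p$, $q$ and the $2n$ incident edges, and for each $i$ adjoining a new edge $\widetilde{e}_i$ from $r_i$ to $s_i$ with $\alpha'(\widetilde{e}_i)=\alpha(\bar{e}_i^p)$ and $\alpha'(\overline{\widetilde{e}_i})=\alpha(\bar{e}_i^q)$, taking $\sigma'$ to be $\sigma$ on the surviving vertices.

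The bulk of the verification is that $(\Gamma',\alpha',\sigma')$ is an oriented torus graph (or, if disconnected, a disjoint union of such, which is all that is needed for an element of $G(n)$). $n$-valency at $r_i$ and $s_i$ is preserved because one incident edge at each is merely relabeled; condition~(1) of Definition~\ref{D:taf} is automatic since $\alpha(\bar{e}_i^p)$ and $\alpha(\bar{e}_i^q)$ are both $\pm v_i$; condition~(2) at the surviving vertices is inherited since their axial sets are unchanged. Condition~(3) at $\widetilde{e}_i$ follows by chaining
\[
\alpha(\mathcal{E}(\Gamma)_{r_i})\equiv\alpha(\mathcal{E}(\Gamma)_p)=\alpha(\mathcal{E}(\Gamma)_q)\equiv\alpha(\mathcal{E}(\Gamma)_{s_i})\pmod{v_i},
\]
using condition~(3) at the original edges $\bar{e}_i^p$ and $\bar{e}_i^q$. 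A short calculation unwinding the Definition~\ref{D:o} identities at $e_i^p$ and $e_i^q$ in $\Gamma$ reduces the orientation condition at $\widetilde{e}_i$ to $\sigma(p)=-\sigma(q)$, which is exactly the hypothesis. Finally $g(\Gamma',\alpha')=g(\Gamma,\alpha)-\mu_p-\mu_q=g(\Gamma,\alpha)$ while $|\mathcal{V}(\Gamma')|=|\mathcal{V}(\Gamma)|-2$, contradicting minimality.

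The main technical obstacle I anticipate is the bookkeeping for degenerate configurations of the paired edges. If $p$ and $q$ are adjacent via some edge $e=e_k^p$, then condition~(2) at $q$ combined with the basis property of $\{v_1,\dots,v_n\}$ forces $\bar{e}=e_k^q$, so that $(r_k,s_k)=(q,p)$; in this case the $k$th pair is simply removed without adjoining a new $\widetilde{e}_k$, and the other indices proceed as usual. Loops in $\Gamma'$ are ruled out, since $r_i=s_i$ would force the axial set at this vertex to contain both $v_i$ and $-v_i$, violating condition~(2). Multi-edges, as already present in the sphere example, are permitted by the definition. Once these cases are handled carefully the surgery and its verifications go through.
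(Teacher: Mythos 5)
Your proof is correct and is essentially the same as the paper's: both remove a cancelling pair of vertices $p,q$ with $\mu_p+\mu_q=0$ (which, as you note, forces coinciding axial sets and $\sigma(p)=-\sigma(q)$) and regraft the matched half-edges to obtain a representative with two fewer vertices and the same torus polynomial. You phrase the argument as a minimality contradiction rather than the paper's descent-until-prime, and you verify the torus-graph and orientation axioms and the degenerate configurations (adjacent $p,q$, potential loops) more explicitly, but the underlying surgery and reasoning are identical.
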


\begin{proof}
Let $(\Gamma,\alpha,\sigma)$ be an oriented torus graph with $g{(\Gamma,\alpha)}\neq 0$ and assume that $(\Gamma,\alpha)$ is not prime. Then there must be two vertices $p$ and $q$ such that $\mu_p+\mu_q=0$. Notice that this can happen only if $\sigma(p)\ne\sigma(q)$. We can now form a new torus graph by removing the two vertices $p$ and $q$ and gluing the $n$ edges $\mathcal{E}_p=\{e_1^p,\dots,e_n^p\}$ with $p$ removed to the $n$ edges $\mathcal{E}_q=\{e_1^q,\dots,e_n^q\}$ with $q$ removed in such a way that $e_i^p$ will be glued to $e_j^q$ whenever $\alpha(e_i^p)=\alpha(e_j^q)$. The resulting graph $(\Gamma',\alpha')$ will be a torus graph with two fewer vertices than $(\Gamma,\alpha)$, such that $(\Gamma',\alpha')\sim (\Gamma,\alpha)$. Since $\Gamma$ is finite this procedure will terminate and we will obtain a prime torus graph.
\end{proof}

\begin{remark}
If we have two oriented torus graphs $(\Gamma_1,\alpha_1)$ and $(\Gamma_2,\alpha_2)$ with vertices $p_1\in \mathcal{V}(\Gamma_1)$ and $p_2\in \mathcal{V}(\Gamma_2)$ such that $\mu_{p_1}+\mu_{p_2}=0$, then we can form the \emph{connected sum} $(\Gamma_1,\alpha_1)\#_{p_1,p_2}(\Gamma_2,\alpha_2)$ in a similar fashion to the proof above and we obtain the following formula:
\[
g((\Gamma_1,\alpha_1)\#_{p_1,p_2}(\Gamma_2,\alpha_2))=g(\Gamma_1,\alpha_1)+g(\Gamma_2,\alpha_2)\in \Lambda^n(J_n).
\]
\end{remark}

\subsection{A Boundary Operator}

We can see that both $\Hom(T^n,S^1)$ and $\Hom(S^1,T^n)$ are isomorphic to $\Z^n$ and are dual by the pairing
\begin{equation}\label{E:duality}
\langle \cdot\, , \cdot\rangle\, \colon \Hom(S^1,T^n)\times \Hom(T^n,S^1)\longrightarrow \Hom(S^1,S^1)\cong \Z,
\end{equation}
defined by composition of homomorphisms, that is, $\langle\, \rho,\xi\, \rangle=\xi\circ\rho$.

Similarly to Definition~\ref{D:J}, we define $J_n^*$ to be the set of non-trivial elements of $\Hom(S^1,T^n)\cong \Z^n$. For each faithful exterior polynomial $h\in \Lambda^n(J_n)$ we can obtain a \emph{dual polynomial} $h^*\in \Lambda^n(J_n^*)$. This is defined by taking each monomial of $h$, which forms a basis of $\Hom(T^n,S^1)\cong\Z^n$, and considering the \emph{dual basis}, elements of which belong in $\Hom(S^1,T^n)\cong\Z^n$, which gives us a faithful exterior monomial in $\Lambda^n(J_n^*)$. In terms of matrices, if $A$ is the invertible matrix associated to a monomial in $h$, then the corresponding monomial in $h^*$ will be associated to the transpose of the inverse of $A$ and we set
\begin{equation}\label{matrixdual}
A^*\letbe(A^{-1})^T.
\end{equation}
Notice that we have the relation: $(AB)^*=A^*B^*$.

We now define a chain complex $(\Lambda^k (J_n^*),d_k)$ as follows: \\for each monomial $s_1\wedge\dots\wedge s_k\in \Lambda^k (J_n^*)$, with all $s_i\in J_n^*$,
\[ d_k(s_1\wedge\dots\wedge s_k)\letbe
\begin{cases}
\sum_{i=1}^k (-1)^{i+1} s_1\wedge\dots\wedge s_{i-1}\wedge \widehat{s}_i\wedge s_{i+1}\wedge\dots\wedge s_k, &\text{if $k>1$;}\\
1 &\text{if $k=1$.}
\end{cases} \]
and $d_0(1)=0$, where $\widehat{s}_i$ denotes that $s_i$ is deleted. It is easy to see that $d^2=0$.

\begin{lemma}\label{L:exact}
The sequence
\[
\cdots \to \Lambda^{i+1}(J_n^*) \xrightarrow{d_{i+1}} \Lambda^{i}(J_n^*) \xrightarrow[\phantom{d_{i+1}}]{d_{i}} \Lambda^{i-1}(J_n^*) \xrightarrow{d_{i-1}} \cdots \xrightarrow[\phantom{d_{i+1}}]{d_1} \Lambda^{0}(J_n^*)\xrightarrow[\phantom{d_{i+1}}]{d_0} 0
\]
is exact.
\end{lemma}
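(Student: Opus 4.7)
My plan is to recognize the given complex as a Koszul-type complex and exhibit an explicit contracting chain homotopy. Let $V$ be the free $\Z$-module with basis $J_n^*$, so that $\Lambda^k(J_n^*)$ coincides with the $k$-th exterior power of $V$ over $\Z$. Let $\varepsilon\colon V\to\Z$ denote the augmentation sending each basis vector $s\in J_n^*$ to $1$. A quick inspection shows that in positive degree $d_k$ is precisely the interior product with $\varepsilon$, and the special clauses $d_1(s)=1$ and $d_0=0$ stated in the paper fit this description as well.

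Next I would choose any single element $v_0\in J_n^*$ (for instance the first standard basis vector of $\Z^n$), noting that $\varepsilon(v_0)=1$, and define
\[
h_k\colon \Lambda^k(J_n^*)\longrightarrow \Lambda^{k+1}(J_n^*),\qquad h_k(\omega)\letbe v_0\wedge\omega,
\]
with $h_{-1}=0$. On a basis monomial $\omega=s_1\wedge\cdots\wedge s_k$ I would verify the chain homotopy identity
\[
d_{k+1}h_k(\omega)+h_{k-1}d_k(\omega)=\omega.
\]
When $v_0$ is distinct from every $s_i$, expanding $d_{k+1}(v_0\wedge s_1\wedge\cdots\wedge s_k)$ via the defining formula produces $s_1\wedge\cdots\wedge s_k$ from deleting $v_0$, together with terms $(-1)^{i+2}\,v_0\wedge s_1\wedge\cdots\widehat{s_i}\cdots\wedge s_k$ that cancel exactly against the terms $(-1)^{i+1}\,v_0\wedge s_1\wedge\cdots\widehat{s_i}\cdots\wedge s_k$ arising in $v_0\wedge d_k(\omega)$. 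In degree $0$ the identity reads $d_1h_0(1)=d_1(v_0)=1$, as required.

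The only point that will require care is the case in which $v_0$ happens to coincide with some $s_j$, since then $h_k(\omega)=v_0\wedge\omega=0$ and the identity has to be recovered entirely from $v_0\wedge d_k(\omega)$. In that sub-case only the term deleting $s_j=v_0$ survives; moving $v_0$ from the front of the wedge back into its original $j$-th slot contributes an extra sign $(-1)^{j-1}$, which combines with the $(-1)^{j+1}$ from the formula for $d_k$ to give exactly $+\omega$. Once the chain homotopy is verified, the identity map of the complex is null-homotopic, so every homology group vanishes, and the sequence is exact.
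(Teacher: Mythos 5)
Your proof is correct, and it takes a slightly different route from the paper's. The paper also proves exactness by wedging with an element of $J_n^*$, but it does so per cycle: given $h\in\operatorname{Ker}d_i$, one chooses $t\in J_n^*$ that does not occur as an indeterminate in any monomial of $h$ (possible because $J_n^*$ is infinite while $h$ is a finite sum), and then $d_{i+1}(t\wedge h)=h-t\wedge d_i(h)=h$. That choice of $t$ avoids precisely the case you handle separately (where $v_0$ coincides with some $s_j$), so no sign analysis is needed and the argument is a one-liner. What your version buys is a single, uniform contracting homotopy $h_k(\omega)=v_0\wedge\omega$ with a fixed $v_0$, independent of the cycle; your sign computation $(-1)^{j+1}(-1)^{j-1}=1$ in the coincident case is right, and it makes the argument robust even when the indexing set is finite (which the paper's per-cycle choice of a fresh $t$ would not be). Both are essentially Koszul-type arguments and both correctly conclude exactness, including at $\Lambda^0$.
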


\begin{proof}
We only need to show that $\text{Ker}\ d_i\subseteq \text{Im}\ d_{i+1}$, for $i\geq 0$. Let $h\in \text{Ker}\ d_i$ and take $H=t\wedge h$ for some $t\in J_n^*$ such that $t$ is not an indeterminate of any monomial of $h$. Then $d_{i+1}(H)=h-t\wedge d_i(h)=h$.
\end{proof}

\begin{remark}
As L\"u and Tan in~\cite[Remark 7]{LT13} explain, although it is easy to see how we could define a similar boundary operator $d'$ on $\Lambda(J_n)$, for a faithful polynomial $h\in \Lambda^n(J_n)$, if $d(h^*)=0$, then generally $d'(h)$ does not equal zero.
\end{remark}

\begin{theorem}\label{T:tgraph}
Let $h\in \Lambda^n(J_n)$ be a faithful polynomial. Then $h=g(\Gamma,\alpha)$ is the torus polynomial of an oriented torus graph if and only if $d(h^*)=0$.
\end{theorem}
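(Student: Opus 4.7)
The plan is to analyse $d(h^*)$ edge-by-edge in the forward direction, then reverse-engineer a graph in the backward direction, relying on the primal–dual correspondence encoded in~\eqref{matrixdual} between bases of $\Z^n$ and their dual bases.

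In the forward direction, I would write $h^*=\sum_p \mu_p^*$ with $\mu_p^*=u_1^p\wedge\cdots\wedge u_n^p$, where $\{u_k^p\}$ is the dual basis of $\{v_k^p=\alpha(e_k^p)\}$, and show that for each edge $e$ of $\Gamma$ the contributions to $d(h^*)$ from its two endpoints cancel. Fix an edge $e$ with $i(e)=p$, $t(e)=q$, suppose $\alpha(e)$ is the $i$-th generator at $p$ and $\alpha(\bar e)=\epsilon\alpha(e)$ is the $j$-th at $q$. Axiom~\ref{D:taf}(3) supplies a bijection $\phi$ on the remaining indices with $v_k^p\equiv v_{\phi(k)}^q\pmod{\alpha(e)}$; solving the linear system defining the dual bases gives $u_{\phi(k)}^q=u_k^p$ for all $k\neq i$, so the two deleted facets at $p$ and $q$ agree in $\Lambda^{n-1}(J_n^*)$ up to the sign $\operatorname{sgn}(\phi)$. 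A short determinant calculation comparing $A_p$ and $A_q$ via the corresponding change-of-basis matrix gives $\sigma(q)=\sigma(p)(-1)^{i+j}\epsilon\operatorname{sgn}(\phi)$, and the orientation axiom $\epsilon=-\sigma(p)\sigma(q)$ then forces $\operatorname{sgn}(\phi)=(-1)^{i+j+1}$, which is exactly the sign needed for the two contributions $(-1)^{i+1}$ and $(-1)^{j+1}\operatorname{sgn}(\phi)$ to cancel.

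In the reverse direction, given $h$ faithful with $d(h^*)=0$, I would build an oriented torus graph directly. Expand $h$ in a chosen basis of faithful monomials of $\Lambda^n(J_n)$; for each signed term in the expansion introduce a vertex $p$, let $\alpha(\mathcal{E}_p)$ be its indeterminate set, and choose $\sigma(p)\in\{\pm1\}$ so that $\mu_p$ reproduces the signed monomial. Each vertex then produces $n$ half-edges $(p,i)$ carrying the signed $(n-1)$-facet $(-1)^{i+1}(\mu_p^*)_{\widehat\imath}\in\Lambda^{n-1}(J_n^*)$, and the hypothesis $\sum_{(p,i)}(-1)^{i+1}(\mu_p^*)_{\widehat\imath}=0$ implies that when these facets are grouped by underlying basis element of $\Lambda^{n-1}(J_n^*)$, the $\pm1$ coefficients balance; any pairing of cancelling half-edges then defines the edge set of a graph $(\Gamma,\alpha)$. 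I would verify the axial axioms in turn: (2) is immediate; two matched $(n-1)$-facets that agree as subsets of $J_n^*$ force the omitted primal vectors $v_i^p$ and $v_j^q$ to span a common primitive rank-$1$ sublattice of $\Z^n$, giving~(1); the equality of dual bases on the complementary hyperplane is precisely axiom~(3); and the forward sign identity read in reverse recovers the orientation relation $\sigma(p)\alpha(e)+\sigma(q)\alpha(\bar e)=0$.

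The main obstacle is bookkeeping in the reverse construction: one has to argue that \emph{any} matching of cancelling half-edges produces a valid oriented torus graph, rather than only a specially chosen one. The clarifying observation is that agreement of two dual facets in $\Lambda^{n-1}(J_n^*)$ already encodes axioms~(1) and~(3) intrinsically, so once a matching is fixed the only remaining data are the signs, and the sign identity from the forward direction shows they force the orientation axiom automatically. Once this is understood, the rest of the verification is mechanical.
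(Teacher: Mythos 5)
Your proposal follows essentially the same strategy as the paper's proof: in the forward direction you pair each edge with its reversal and show that the corresponding signed dual facets $(\mu_p^*)_e$ and $(\mu_q^*)_{\bar e}$ cancel, using the fact that the two dual bases agree on the $n-1$ vectors complementary to $\alpha(e)$; in the backward direction you construct vertices with bouquets of signed half-edges from the monomials of $h^*$, match cancelling half-edges, and recover axioms (1)--(3) and the orientation relation from the agreement of the dual $(n-1)$-facets. The paper carries out the same plan slightly more computationally (reordering so the distinguished index is $1$ and writing out explicit change-of-basis matrices), whereas you track the permutation sign $\operatorname{sgn}(\phi)$ abstractly, but the underlying argument is identical.
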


\begin{proof}
$(\Rightarrow)$ Suppose $h=g{(\Gamma,\alpha)}$ for some oriented torus graph $(\Gamma,\alpha)$. We write
\[
h=\sum_{p\in\mathcal{V}}\mu_p\quad \text{and}\quad h^*=\sum_{p\in\mathcal{V}}\mu_p^*,
\]
where $\mu_p^*$ is the \emph{dual monomial} of $\mu_p$, and each monomial can be written as
\[
\mu_p=\alpha(e_1)\wedge\dots\wedge\alpha(e_n)\quad \text{and}\quad \mu_p^*=\alpha^*(e_1)\wedge\dots\wedge \alpha^*(e_n),
\]
where $\alpha^*(e_i)\in J_n^*$, for $1\leq i\leq n$.

Then for an edge $e_i\in \mathcal{E}_p$ define
\[
(\mu_p^*)_{e_i}=(-1)^{i+1}\alpha^*(e_1)\wedge\dots\wedge \widehat{\alpha^*(e_i)}\wedge\dots\wedge \alpha^*(e_n)\in \Lambda^{n-1}(J_n^*),
\]
for $1\leq i\leq n$. Notice that
\begin{equation}\label{E:sumsum}
d(h^*)=\sum_{p\in \mathcal{V}} \sum_{e\in \mathcal{E}_p}(\mu_p^*)_e.
\end{equation}
We will show that for an edge $e$ with $i(e)=p$ and $t(e)=q$ we have
\begin{equation}
(\mu_p^*)_e+(\mu_q^*)_{\bar{e}}=0,
\end{equation}
which will imply that $d(h^*)=0$ by~\eqref{E:sumsum}.

For $n\geq 2$ we can always write
\[
\mu_p=\alpha(e)\wedge \alpha(e_2)\wedge\dots\wedge \alpha(e_n)\quad\text{and}\quad -\mu_q=\alpha(\bar{e})\wedge \alpha(e_2')\wedge\dots\wedge \alpha(e_n').
\]
We know that $\alpha(\bar{e})=\pm\alpha(e)$ depending on the sign of $p$ and $q$ by Definition~\ref{D:o}. We also have that $\alpha(e_i')=\alpha(e_{j_i})+k_i\alpha(e)$, for some $k_i\in \Z$, $2\leq i\leq n$, by Definition~\ref{D:taf}(3). So, letting $\{ \mathbf{e_1},\dots,\mathbf{e_n}\}$ denote the standard basis of $\Z^n$, we can write
\[
[\alpha(\bar{e})\ \alpha(e_2')\cdots \alpha(e_n')]=[\alpha(e)\ \alpha(e_2)\cdots \alpha(e_n)][\pm\mathbf{e_1}\ k_2\mathbf{e_1}+\mathbf{e_{j_2}}\cdots k_n\mathbf{e_1}+\mathbf{e_{j_n}}],
\]
as $(n\times n)$-integral matrices and by taking determinants and using Definition~\ref{D:o} we have that
\[
\det [\pm\mathbf{e_1}\ k_2\mathbf{e_1}+\mathbf{e_{j_2}}\cdots k_n\mathbf{e_1}+\mathbf{e_{j_n}}]=\pm1,
\]
since $\sigma(p)=\mp \sigma(q)$.
Taking duals (see~\eqref{matrixdual}) and, without loss of generality, we have,
\begin{align*}
[\alpha^*(\bar{e})\ \alpha^*(e_2')\cdots \alpha^*(e_n')]&=[\alpha^*(e)\ \alpha^*(e_2)\cdots \alpha^*(e_n)][\pm\mathbf{e_1}\ k_2\mathbf{e_1}+\mathbf{e_{2}}\cdots k_n\mathbf{e_1}+\mathbf{e_{n}}]^*\\
&=[\alpha^*(e)\ \alpha^*(e_2)\cdots \alpha^*(e_n)][\mathbf{e\ e_2\cdots e_n}],
\end{align*}
where $\mathbf{e}=\pm\mathbf{e_1}\mp k_2\mathbf{e_2}\mp\dots\mp k_n\mathbf{e_n}$. Therefore
\[
-\mu_q^*=\alpha^*(\bar{e})\wedge \alpha^*(e_2')\wedge\dots\wedge \alpha^*(e_n')=\alpha^*(\bar{e})\wedge \alpha^*(e_2)\wedge\dots\wedge \alpha^*(e_n).
\]
This implies that $-(\mu_q^*)_{\bar{e}}=(\mu_p^*)_e$, which proves the statement in this case.

In the special case that $n=1$, a direct check shows that the result holds.

$(\Leftarrow)$ We write
\[
h=\sum_{i=1}^m t_i=\sum_{i=1}^m t_{i,1}\wedge\dots\wedge t_{i,n}\quad \text{and}\quad h^*=\sum_{i=1}^m s_i=\sum_{i=1}^m s_{i,1}\wedge\dots\wedge s_{i,n},
\]
where $t_i^*=s_i$. Define
\[
[s_i]_j=(-1)^{j+1}s_{i,1}\wedge\dots\wedge \widehat{s_{i,j}}\wedge\dots\wedge s_{i,n}\in \Lambda^{n-1}(J_n^*),
\]
for $1\leq i\leq m$ and $1\leq j\leq n$. Notice that we can now write
\[
d(h^*)=\sum_{i=1}^m \sum_{j=1}^n [s_i]_j=0.
\]
We now construct an oriented torus graph. Take $m$ points $p_1,\dots,p_m$ as vertices and label them by $s_1,\dots,s_m$ respectively. For each vertex $p_i$ make a \emph{bouquet} around $p_i$ of $n$ segments and label them by $[s_i]_1,\dots,[s_i]_n$. Since $h^*$ is faithful and $d(h^*)=0$ we have that
\begin{equation}\label{E:2}
[s_i]_j+[s_k]_l=0,
\end{equation}
for some $k\ne i$, with $1\leq j,l\leq n$. We now join these segments to obtain an $n$-valent graph with labelled oriented edges. We relabel these edges in the following fashion:
\[
[s_i]_j\longmapsto t_{i,j}\in \Hom(T^n,S^1),
\]
to define a map $\alpha\colon \mathcal{E}(\Gamma)\to \Hom(T^n,S^1)$.
We now check that the conditions of being a torus graph are satisfied. It is obvious that $\alpha(\mathcal{E}_{p_i})$ is a basis for each vertex $p_i$ as $h$ is faithful. Suppose we have an edge $e$ such that $i(e)=p_i$ and $t(e)=p_k$. This implies that $s_i$ and $s_k$ agree on $(n-1)$-elements and satisfy~\eqref{E:2} for some $j,l$. Without loss of generality, for $n\geq 2$, we can write
\[
s_i=\phi\wedge \eta_2\wedge\dots\wedge\eta_n\quad \text{and}\quad -s_k=\psi\wedge \eta_2\wedge\dots\wedge\eta_n,
\]
Since $s_i$ is faithful we can write
\[
\psi=c_1\phi+c_2\eta_2+\dots+c_n\eta_n,
\]
for some $c_1,\dots,c_n\in \Z$. By setting $\mathbf{e}=(c_1,\dots,c_n)$ we can write
\[
[\psi\ \eta_2\cdots \eta_n]=[\phi\ \eta_2\cdots \eta_n][\mathbf{e}\ \mathbf{e_2}\cdots \mathbf{e_n}].
\]
Note that $c_1=\pm 1$. So
\begin{align*}
[\psi\ \eta_2\cdots \eta_n]^*&=[\phi\ \eta_2\cdots \eta_n]^*[\mathbf{e}\ \mathbf{e_2}\cdots \mathbf{e_n}]^*\\
&=[\phi\ \eta_2\cdots \eta_n]^*[c_1\mathbf{e_1}\ k_2\mathbf{e_1}+\mathbf{e_2}\cdots k_n\mathbf{e_1}+\mathbf{e_n}],
\end{align*}
where $k_j=-c_1c_j$. By writing
\[
[\phi\ \eta_2\cdots \eta_n]^*=[\alpha(e)\ \alpha(e_2)\cdots \alpha(e_n)],
\]
we see that
\begin{align*}
[\psi\ \eta_2\cdots \eta_n]^*&=[c_1\alpha(e)\ k_2\alpha(e)+\alpha(e_2)\cdots k_n\alpha(e)+\alpha(e_n)]\\
&=[\alpha(\bar{e})\ \alpha(e_2')\cdots \alpha(e_n')].
\end{align*}
It is easy to see that the other conditions of being a torus graph follows from this observation. Finally we need to check the orientation condition as stated in Definition~\ref{D:o}. We set $\sigma(p_i)=\det [s_i]$. We have that
\[
\sigma(p_k)=-\det [\psi\ \eta_2\cdots \eta_n]=-c_1\det [\phi\ \eta_2\cdots \eta_n]=-c_1\sigma(p_i).
\]
Therefore, $\sigma(p_i)=\sigma(p_k)$ if and only if $c_1=-1$, which implies that $\alpha(\bar{e})=-\alpha(e)$. Similarly, $\sigma(p_i)=-\sigma(p_k)$ if and only if $c_1=+1$, which implies that $\alpha(\bar{e})=\alpha(e)$. We then get that $h=g(\Gamma,\alpha)$.

It is easy to check that this is true for $n=1$ as well.
\end{proof}

\begin{definition}\label{D:K}
Let $K_n$ denote the abelian group of all faithful exterior polynomials $h\in \Lambda^n(J_n)$ such that $d(h^*)=0$.
\end{definition}

\begin{corollary}
We have the following isomorphism of abelian groups
\[
G(n)\cong K_n.
\]
\end{corollary}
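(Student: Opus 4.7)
The plan is to observe that this corollary is essentially a repackaging of Theorem~\ref{T:tgraph} together with the definition of $G(n)$, and so I would build the isomorphism directly from the homomorphism $g$ in~\eqref{e:g}.

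First I would verify that $g\colon G(n)\to \Lambda^n(J_n)$ is a homomorphism of abelian groups whose image lies in $K_n$. The additivity is immediate from the fact that disjoint union of oriented torus graphs sums the vertex sets, and the torus polynomial is defined as a sum over vertices, so $g((\Gamma_1,\alpha_1)\sqcup (\Gamma_2,\alpha_2))=g(\Gamma_1,\alpha_1)+g(\Gamma_2,\alpha_2)$. By construction every monomial $\mu_p$ arising from a vertex of an oriented torus graph is faithful, so $g(\Gamma,\alpha)\in \Lambda^n(J_n)$ is faithful, and Theorem~\ref{T:tgraph} $(\Rightarrow)$ tells us that $d\bigl(g(\Gamma,\alpha)^*\bigr)=0$. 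Hence $g$ factors through $K_n\subseteq \Lambda^n(J_n)$.

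Next I would check injectivity and surjectivity of the induced map $g\colon G(n)\to K_n$. Injectivity is tautological: the equivalence relation defining $G(n)$ collapses precisely those oriented torus graphs with the same torus polynomial, so distinct classes have distinct images. For surjectivity, given any $h\in K_n$, that is, any faithful $h\in\Lambda^n(J_n)$ with $d(h^*)=0$, Theorem~\ref{T:tgraph} $(\Leftarrow)$ produces an oriented torus graph $(\Gamma,\alpha)$ with $g(\Gamma,\alpha)=h$, so the class of $(\Gamma,\alpha)$ in $G(n)$ maps to $h$.

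There is essentially no obstacle: both directions of Theorem~\ref{T:tgraph} are already in hand, and the only thing to be careful about is ensuring that the zero element of $G(n)$ (the class of the empty graph) corresponds to the zero polynomial in $K_n$, and that the group operation on $G(n)$ (disjoint union) matches addition in $\Lambda^n(J_n)$; both are immediate from the vertex-sum definition of $g(\Gamma,\alpha)$. Combining these observations gives the desired isomorphism $G(n)\cong K_n$.
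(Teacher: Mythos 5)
Your argument is essentially the paper's own proof, just spelled out in more detail: both use that the image of $g$ is $K_n$ by Theorem~\ref{T:tgraph} and that $g$ is injective by the very definition of the equivalence relation on $G(n)$. The extra verifications you add (additivity under disjoint union, behaviour of the zero element) are correct and implicit in the paper's construction.
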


\begin{proof}
By Theorem~\ref{T:tgraph} the image of the homomorphism $g$, see~\eqref{e:g}, is $K_n$. By construction the kernel of $g$ is trivial and so we have the required isomorphism.
\end{proof}

\section{Equivariant Complex Bordism}\label{ECB}

We introduce a bordism theory related to tangentially stably complex $G$-manifolds, for a compact Lie group $G$, and consider the normal data around fixed point sets. We restrict our attention to classes containing tangentially stably complex $T^n$-manifolds that have a finite fixed point set, discuss the \emph{universal toric genus} of~\cite{BPR10} and extend the pullback square of Hanke~\cite{Han05}.

\begin{definition}
For a $G$-space $X$ the \emph{bordism groups} of tangentially stably complex $G$-manifolds, denoted by $\Omega_m^{U:G}(X)$, are $G$-bordism classes of singular tangentially stably complex $G$-manifolds $M^m\to X$.
\end{definition}

The coefficients of this theory
\[
\Omega_*^{U:G}\letbe \Omega_*^{U:G}(\text{pt.})\letbe \bigoplus_{m\geq 0}\Omega_m^{U:G}(\text{pt.})
\]
are equipped with a ring structure induced by the diagonal $G$-action on the cartesian product of two $G$-manifolds.

\subsection{Restriction to Fixed Point Data}

Restricting to fixed point data allows us to use non-equivariant techniques in equivariant topology. We now consider this approach for equivariant bordism which was originally studied by Conner and Floyd~\cite{CF64} with tom Dieck~\cite{tD70}, Sinha~\cite{Sin01} and Hanke~\cite{Han05} providing further work. We use Hattori's~\cite{Hat74} notation and viewpoint.

\begin{definition}
Let $\mathcal{F}_*^G$ denote the bordism ring of pairs $(X,\nu)$, where $X$ is a stably complex manifold and $\nu$ is a complex $G$-bundle over $X$ without trivial irreducible factors in the fibres.
\end{definition}

By restricting to normal data around the fixed point set we obtain a homomorphism
\[
\varphi_{\Omega}\colon \Omega_*^{U:G}\longrightarrow \mathcal{F}_*^G
\]
given by
\begin{align*}
\varphi_{\Omega}[M]&=[M^G,\nu({M^G},M)]\\
&=\sum_{\substack{F\subset M^G\\ F\, \text{connected}}} [F,\nu(F,M)].
\end{align*}

When working with the torus it turns out this map is extremely important as:

\begin{theorem}[\cite{HO72}]
If $G=T^n$, then $\varphi_{\Omega}$ is a monomorphism.
\end{theorem}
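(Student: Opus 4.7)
The plan is to produce enough $R(T^n)$-valued invariants of a class $[M] \in \Omega_*^{U:T^n}$ that (a) factor through $\varphi_\Omega$ and (b) jointly separate bordism classes. Combined, these two properties immediately give injectivity: if $\varphi_\Omega[M] = 0$ then every invariant of $[M]$ vanishes, forcing $[M] = 0$.

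First, for each finite-dimensional complex $T^n$-representation $V$ I would define a K-theoretic characteristic number $\chi_V(M) \in R(T^n)$ by applying the equivariant Gysin pushforward $p_!\colon K_{T^n}(M) \to R(T^n)$ to the pullback of $V$, where $p_!$ is constructed using the $T^n$-equivariant complex orientation of $\tau(M) \oplus \R^k$. By construction these numbers, and more generally those built from all polynomials in equivariant Chern classes evaluated on the equivariant fundamental class, are invariants of tangentially stably complex $T^n$-bordism.

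Second, I would invoke the Atiyah--Bott--Segal localisation theorem in equivariant K-theory to re-express each $\chi_V(M)$ as a sum over the connected components $F \subset M^{T^n}$ of integrals depending only on $F$ (as a stably complex manifold) and on $\nu(F,M)$ (as a complex $T^n$-bundle over $F$), divided by the equivariant K-theoretic Euler class of $\nu(F,M)$. These are exactly the data recorded by $\varphi_\Omega[M] \in \mathcal{F}_*^{T^n}$, so each $\chi_V$ descends to a well-defined homomorphism $\mathcal{F}_*^{T^n} \to R(T^n)$, and therefore $\chi_V(M)$ depends only on $\varphi_\Omega[M]$.

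Third, and most delicately, one must show that the collective map $\{\chi_V\}_V \colon \Omega_*^{U:T^n} \to \prod_V R(T^n)$ is injective. This is the equivariant analogue of Stong's theorem that Chern numbers detect $\Omega_*^U$, and is the principal obstacle. The standard route factors through the Pontryagin--Thom map $\Omega_*^{U:T^n} \to MU_*^{T^n}$, which is injective by the results of tom Dieck and L\"offler recalled in the introduction; complex-orientability of equivariant K-theory then supplies a natural transformation $MU_{T^n}^* \to K_{T^n}^*$, and one argues, exploiting the splitting of $T^n$-representations into one-dimensional characters together with induction on $n$ (i.e.\ restricting to successive circle subgroups), that the induced characteristic-number map remains injective on the image of $\Omega_*^{U:T^n}$. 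This reduction to a circle-by-circle analysis is the technical heart, and where one must be careful that enough representations $V$ have been used to detect every nontrivial bordism class.
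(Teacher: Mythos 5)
The paper does not actually prove this theorem: it is cited to Hamrick and Ossa \cite{HO72}, and it also follows from Hanke's Theorem~\ref{T:Han}, which the paper likewise states without proof. So there is no internal proof to compare against, and your proposal can only be assessed on its own logic.

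On that score, the third step of your sketch contains a genuine problem. You propose to establish that the collection $\{\chi_V\}$ separates bordism classes by invoking the tom~Dieck--L\"offler result that the Pontryagin--Thom map (equivalently the universal toric genus $\Phi\colon \Omega_*^{U:T^n}\to MU^*(BT^n_+)$) is injective. But once you are permitted to use that result, the theorem follows in one line from the commutativity of the square~\eqref{e:sq}: since $\varphi_{MU}\circ\Phi = S^{-1}\Phi\circ\varphi_{\Omega}$ with $\varphi_{MU}$ injective, injectivity of $\Phi$ forces injectivity of $\varphi_{\Omega}$, and your entire $K$-theoretic apparatus (Gysin pushforwards, ABS localisation, the representation-valued numbers $\chi_V$) is redundant. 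Conversely, if the goal is a self-contained $K$-theoretic proof in the spirit of Hamrick--Ossa and Hattori, then you may \emph{not} treat tom~Dieck--L\"offler as a black box, because the separating power of characteristic numbers is precisely what both results encode; what remains is the ``circle-by-circle'' rigidity and integrality argument that you explicitly defer, and that deferral is the entire mathematical content. As written, the proposal either begs the question or leaves the decisive step as an unproved assertion.
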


We will now focus all out attention on the case when $G=T^n$.

\begin{definition}
Let $\mathcal{Z}_*^{U:T^n}$ denote the subring of $\Omega_*^{U:T^n}$ given by elements that can be represented by a tangentially stably complex $T^n$-manifold where the fixed point set is finite.
\end{definition}

We will now study $\varphi_{\Omega}$ when restricted to $\mathcal{Z}_*^{U:T^n}$. Recall that $J_n$ denotes the set of non-trivial irreducible $T^n$-representations. Then for each fixed point $p\in M^{T^n}$ we may write
\begin{equation}
\nu(\{p\},M)\cong T_p M=V_1(p)\oplus\dots\oplus V_m(p),
\end{equation}
where the dimension of $M$ is $2m$ and $V_1(p),\dots,V_m(p)\in J_n$. Then $\varphi_{\Omega}$ restricted to $\mathcal{Z}_*^{U:T^n}$, which we call $\varphi_Z$, can be written as

\begin{align*}
\varphi_Z\colon \mathcal{Z}_*^{U:T^n}&\longrightarrow \Z[J_n]\\
[M]&\longmapsto \sum_{p\in M^{T^n}} \sigma(p) \prod_{i=1}^m V_i(p),
\end{align*}
where the sign of $p$, $\sigma(p)$, is as in Definition~\ref{D:sign}.

\begin{proposition}\label{P:Z}
The commutative diagram
\[ \xymatrix{
\mathcal{Z}_*^{U:T^n} \ar[r]^-{\iota} \ar[d]_-{\varphi_Z} &\Omega_*^{U:T^n} \ar[d]_-{\varphi_{\Omega}}\\
\Z [J_n] \ar[r]^-{\bar{\iota}} &\mathcal{F}_*^{T^n}
} \]
has all maps injective and is a pullback square.
\end{proposition}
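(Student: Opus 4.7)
The plan is to proceed in two parts: first I would verify that all four arrows are injective, then prove the pullback assertion. For injectivity, $\iota$ is the inclusion of a subring, and $\varphi_{\Omega}$ is injective by the Hattori--Ohsawa theorem just cited. For $\bar{\iota}$ I would argue that distinct monomials of $\Z[J_n]$ give distinct classes in $\mathcal{F}_*^{T^n}$: any $\mathcal{F}_*^{T^n}$-bordism between two disjoint unions of signed points decorated with complex $T^n$-representations is a stably complex $1$-manifold $V$ equipped with a complex $T^n$-bundle $\xi$, and Proposition~\ref{P:segal} applied to each interval component of $V$ forces the isotypical decomposition at one end to match the one at the other. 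Consistency with the orientation of $\partial[0,1]$ then shows $\ker\bar{\iota}=0$. Finally $\varphi_Z$ is injective because $\bar{\iota}\circ\varphi_Z=\varphi_{\Omega}\circ\iota$ is a composition of injective maps.

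For the pullback I would unfold the universal property as follows: given $m\in\Omega_*^{U:T^n}$ and $p\in\Z[J_n]$ satisfying $\varphi_{\Omega}(m)=\bar{\iota}(p)$, the task is to produce a unique $z\in\mathcal{Z}_*^{U:T^n}$ with $\iota(z)=m$ and $\varphi_Z(z)=p$. Uniqueness is immediate from injectivity of $\iota$. For existence I would choose a representative $M$ of $m$ and, using the equality $\varphi_{\Omega}(m)=\bar{\iota}(p)$ in $\mathcal{F}_*^{T^n}$, select an $\mathcal{F}_*^{T^n}$-bordism $(V,\xi)$ between $(M^{T^n},\nu(M^{T^n},M))$ and the disjoint union $N_0$ of signed isolated points encoding $p$. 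The strategy is then to build, from $(V,\xi)$, an $\Omega_*^{U:T^n}$-bordism between $M$ and a new tangentially stably complex $T^n$-manifold $N$ with $N^{T^n}=N_0$; the class $z\letbe[N]$ will then lie in $\mathcal{Z}_*^{U:T^n}$ and satisfy both required equations.

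Concretely, I would form the unit disk bundle $D(\xi)\to V$, which inherits a tangentially stably complex $T^n$-structure from the stably complex structure of $V$ and the complex $T^n$-structure of $\xi$. Gluing $D(\xi)$ to $M\times[0,1]$ along the equivariant tubular neighborhood $D(\nu(M^{T^n},M))\subset M\times\{1\}$ produces a $T^n$-equivariant cobordism $W$ whose other boundary component is the desired $N$, a manifold with isolated fixed-point set $N_0$.

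The main obstacle is the compatibility check for this gluing: one must verify that the tangentially stably complex $T^n$-structures on $M\times[0,1]$ and on $D(\xi)$ can be chosen to agree on the overlap after stabilisation by trivial $\C$-summands, so that $W$ represents a genuine class in $\Omega_*^{U:T^n}$. This is precisely Hanke's thickening argument from~\cite{Han05} transported to the isolated-fixed-point setting; once it is in place, commutativity of the square is automatic from the definitions of the four maps, and the pullback property follows.
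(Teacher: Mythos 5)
Your proof is correct and follows essentially the same route as the paper: form the disk bundle of the relevant complex $T^n$-bundle, glue it to $M\times[0,1]$ along a tubular neighbourhood of the fixed set, invoke equivariant gluing/smoothing to get a genuine $T^n$-bordism, and read off a representative with only isolated fixed points. The only packaging difference is that you invoke a single $\mathcal{F}_*^{T^n}$-bordism $(V,\xi)$ from $(M^{T^n},\nu)$ to $N_0$ at once, whereas the paper caps off one positive-dimensional component $F$ at a time via a nullbordism $W$ of $F$; you also spell out the injectivity of $\bar\iota$, which the paper leaves implicit.
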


\begin{proof}
We need to check that given an element $x\in \Omega_n^{U:T^n}$ such that $\varphi_{\Omega}(x)\subset \bar{\iota}(\Z[J_n])$, we can represent $x$ by a tangentially stably complex $T^n$-manifold whose fixed point set contains no components of positive dimension.

Suppose that $M^m$ is a tangentially stably complex $T^n$-manifold, that represents $x$, such that $M^{T^n}$ contains a connected component $F$ of positive dimension. We may write the normal bundle $\nu(F,M)$, of complex dimension $d$ say, as
\begin{equation}\label{e:nu}
\nu(F,M)=(E_1\otimes V_1)\oplus\dots\oplus (E_j\otimes V_j)
\end{equation}
due to Proposition~\ref{P:segal}. Since $\varphi_{\Omega}(x)\subset \bar{\iota}(\Z[J_n])$, this implies that $[F]=0\in \Omega_{m-2d}^U$. Therefore, there exists a  stably complex manifold $W$ (with trivial $G$-action) such that $\partial W=F$ and there is a $T^n$-bundle $\xi$ over $W$ that can be written as
\[
\xi=(\widetilde{E}_1\otimes V_1)\oplus\dots\oplus (\widetilde{E}_j\otimes V_j),
\]
such that $\widetilde{E}_i|_{F}\cong E_i$, for $1\leq i\leq j$.

We consider the total space $D(\xi)$ of the disc bundle as a manifold with boundary and write its tangent bundle as
\[
\tau(D(\xi))\cong p^*\tau(W)\oplus p^*\xi,
\]
where $p\colon D(\xi)\to W$ is the projection map. We therefore have a tangentially stably complex $T^n$-structure on $D(\xi)$ coming from the complex $T^n$-structure of $\xi$ and the fact that $W$ is a stably complex $T^n$-manifold.

We similarly have that, for a tubular neighbourhood $N(F)\cong D(\nu(F,M))$ of $F$ in $M$,
\[
\tau(M)|_{N(F)}\cong p^*\tau(F)\oplus p^*\nu(F,M)
\]
for the restriction of $p$ to $F$.

Therefore, we may identify ${D(\nu(F,M))}\subset \partial D(\xi)$ with ${N(F)}\subset M\times \{ 1\}\subset M\times I$ and using standard equivariant gluing and smoothing techniques, see~\cite{K07}, obtain a tangentially stably complex $T^n$-manifold whose boundary is $(M\times \{0\})\sqcup M'$. Thus $M'$ also represents $x$ in $\Omega_m^{U:T^n}$ and its fixed point set has one less component of positive dimension, that is
\[
(M')^{T^n}=M^{T^n}\smallsetminus \{F\}.
\]

If there are other connected components of positive dimension in $(M')^{T^n}$ we repeat this procedure until there are no components of positive dimension and obtain a representative of $x$ of the required type.
\end{proof}

\subsection{Universal Toric Genus}

We discuss a natural multiplicative transformation
\[
\Phi\colon \Omega_*^{U:T^n}\longrightarrow MU^*(BT^n_+),
\]
where the image is the non-equivariant complex cobordism of $BT^n$. The homomorphism $\Phi$ was first introduced by tom Dieck in~\cite{tD70}, studied further in~\cite{L73,Hat74,Sin01,Han05} and most recently in~\cite{BPR10}, where the results of Toric Topology were brought to bear. It was in~\cite[\S 2]{BPR10} that it was called the \emph{universal toric genus} in its relation to equivariant extensions of Hirzebruch genera. It can be defined as follows: let $\pi_M\colon M^m\to *$ be the unique projection and consider applying the Borel construction to obtain the smooth fibre bundle
\[
1\times_{T^n} \pi_M\colon ET^n\times_{T^n} M\longrightarrow BT^n.
\]
Then by the work of Quillen~\cite{Q71} there exists a Gysin homomorphism that gives us a map
\[
(1\times_{T^n} \pi_M)_*\colon MU^*((ET^n\times_{T^n} M)_+)\longrightarrow MU^{*-m}(BT^n_+),
\]
and we define the universal toric genus as
\[
\Phi [M^m]\letbe (1\times_{T^n} \pi_M)_*1\in MU^{-m}(BT^n_+).
\]

\begin{remark}
The universal toric genus factors through \emph{homotopical equivariant cobordism} $MU_*^{T^n}$, first defined in~\cite{tD70}, via an equivariant version of the Pontryagin-Thom construction. For more details see~\cite[\S 2]{BPR10} and~\cite[\S 3]{D13}.
\end{remark}

Let $S\subset MU^*(BT^n_+)$ be the multiplicative set generated by Euler classes $e(V)\in MU^{2}(BT^n_+)$ of bundles
\[
ET^n\times_{T^n} V\longrightarrow BT^n,
\]
for $V\in J_n$. We then obtain the following commutative diagram
\begin{equation}\label{e:sq}
\begin{aligned}
\xymatrix{
\Omega_*^{U:T^n} \ar[r]^-{\Phi} \ar[d]_-{\varphi_{\Omega}} &MU^*(BT^n_+) \ar[d]_-{\varphi_{MU}}\\
\mathcal{F}_*^{T^n} \ar[r]^-{S^{-1}\Phi} &S^{-1}MU^*(BT^n_+)
}
\end{aligned}
\end{equation}
where $S^{-1}MU^*(BT^n_+)$ is the localisation, $\varphi_{MU}$ the canonical homomorphism and
\[
S^{-1}\Phi [X,\nu]\letbe \frac{(1\times_{T^n} \pi_X)_*1}{e(V_1)^{|E_1|}\cdots e(V_j)^{|E_j|}},
\]
where we write $\nu=(E_1\otimes V_1)\oplus\cdots\oplus (E_j\otimes V_j)$ as in~\eqref{e:nu}. Note that $\varphi_{MU}$ is injective as $S$ does not contain any zero divisors. We now give the following result of Hanke:

\begin{theorem}[{\cite[Theorem 2]{Han05}}]\label{T:Han}
The commutative square~\eqref{e:sq} has all maps injective and is a pullback square of $MU_*$-algebras.
\end{theorem}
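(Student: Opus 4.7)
My plan is to verify the two assertions in turn: injectivity of each of the four maps, and the pullback property.

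For the injectivity statements, the left vertical $\varphi_{\Omega}$ is a monomorphism by the theorem of~\cite{HO72} cited above. The ring $MU^{*}(BT^{n}_{+})$ is isomorphic to $MU_{*}[[x_{1},\ldots,x_{n}]]$, and each Euler class $e(V)\in S$ is a non-zero-divisor in this power series ring (its lowest-degree term with respect to a complex orientation is a non-zero linear combination of the $x_{i}$), so $\varphi_{MU}$ is injective. Injectivity of the top map $\Phi$ is then forced by commutativity of the square together with the injectivity of $\varphi_{MU}$ and $\varphi_{\Omega}$: if $\Phi(z)=0$ then $S^{-1}\Phi(\varphi_{\Omega}(z))=\varphi_{MU}(\Phi(z))=0$, and once $S^{-1}\Phi$ is known to be injective this gives $z=0$. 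Injectivity of $S^{-1}\Phi$ itself follows from the explicit formula: if $S^{-1}\Phi[X,\nu]=0$, then after clearing the Euler-class denominators we obtain $(1\times_{T^{n}}\pi_{X})_{*}1=0$ in $MU^{*}(BT^{n}_{+})$, which is a universal characteristic-number statement that forces $[X,\nu]=0$ in $\mathcal{F}_{*}^{T^{n}}$.

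For the pullback property, uniqueness of any lift is automatic from injectivity of $\varphi_{\Omega}$, so only existence requires work. Suppose $(x,y)\in MU^{*}(BT^{n}_{+})\times \mathcal{F}_{*}^{T^{n}}$ satisfies $\varphi_{MU}(x)=S^{-1}\Phi(y)$. Writing $y=\sum_{i}[F_{i},\nu_{i}]$, the natural geometric starting point is the disjoint union of equivariant disc bundles $\bigsqcup_{i} D(\nu_{i})$, whose fixed set is $\bigsqcup_{i} F_{i}$ with the correct normal bundles, but whose boundary is the disjoint union of the sphere bundles $S(\nu_{i})$, on which $T^{n}$ acts without global fixed points. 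The matching hypothesis $\varphi_{MU}(x)=S^{-1}\Phi(y)$ is exactly the integrality condition needed to equivariantly cobord this boundary through a $T^{n}$-manifold whose only fixed points have already been accounted for, producing a closed stably complex $T^{n}$-manifold $M$ with $\varphi_{\Omega}[M]=y$; the Pontryagin-Thom description of $\Phi$ then gives $\Phi[M]=x$ automatically.

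The main obstacle is precisely this geometric realisation step, since one must upgrade a purely cohomological integrality statement in $S^{-1}MU^{*}(BT^{n}_{+})$ to a smooth equivariant bordism. Here I would follow Hanke's argument in~\cite{Han05}, which couples an equivariant Pontryagin-Thom construction with tom Dieck's localization theorem in equivariant complex cobordism: the free-action portion of the bordism is built from the non-equivariant cobordism class witnessing $x\in MU^{*}(BT^{n}_{+})$, while the fixed-point contributions are modelled by the disc bundles $D(\nu_{i})$. The localization theorem guarantees that these two pieces can be glued compatibly once the fractional identity $\varphi_{MU}(x)=S^{-1}\Phi(y)$ is given, and the $MU_{*}$-algebra structure is respected throughout because the Pontryagin-Thom and Borel constructions are multiplicative.
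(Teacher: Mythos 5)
The paper does not prove this theorem: it is imported as Hanke's Theorem~2 from~\cite{Han05}, so there is no in-paper argument to compare against. Judged on its own terms, your sketch identifies the right shape but leaves the crux unaddressed. For the pullback property you describe the correct geometric starting point (disc bundles over the fixed components, whose common boundary is a free $T^n$-space), but the step you flag as the ``main obstacle''---producing an equivariant nullbordism of this boundary from the cohomological identity $\varphi_{MU}(x)=S^{-1}\Phi(y)$---is precisely the content of Hanke's theorem, and ``follow Hanke's argument'' is not a proof. Hanke in fact does not argue by directly gluing in a bounding manifold for the sphere bundles; he routes through tom Dieck's homotopical equivariant cobordism $MU_*^{T^n}$, uses that the Pontryagin--Thom map $\Omega_*^{U:T^n}\to MU_*^{T^n}$ is injective, and characterizes the image using tom Dieck's localization theorem together with the families filtration. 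The geometric gluing you describe is closer to the argument the paper uses for Proposition~\ref{P:Z}, but there the positive-dimensional fixed component is already known to bound nonequivariantly; upgrading a localized $MU$-identity to a geometric bordism is a genuinely different problem.

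A secondary gap: your injectivity argument for $S^{-1}\Phi$ is too quick. Clearing denominators from $S^{-1}\Phi[X,\nu]=0$ gives a single relation in $MU^*(BT^n_+)$, but it is not immediate that this forces $[X,\nu]=0$ in $\mathcal{F}_*^{T^n}$. One needs the Conner--Floyd/Hattori identification of $\mathcal{F}_*^{T^n}$ with a bordism ring of classifying spaces, where a class is detected by a full collection of $MU$-characteristic numbers of the pair $(X,\nu)$, not merely by a pushforward of~$1$. Without that structural input the step from ``this characteristic number vanishes'' to ``the bordism class vanishes'' is unjustified. The injectivity of $\varphi_{MU}$ via non-zero-divisors and of $\varphi_{\Omega}$ via~\cite{HO72} are fine, and once $S^{-1}\Phi$ is known injective, injectivity of $\Phi$ does follow by the diagram chase you give.
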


By considering the restriction of $\Omega_*^{U:T^n}$ to the subring $\mathcal{Z}_*^{U:T^n}$ we obtain the commutative square
\[ \xymatrix{
\mathcal{Z}_*^{U:T^n} \ar[rr]^-{\Phi\circ\iota} \ar[d]_-{\varphi_{Z}} & &MU^*(BT^n_+) \ar[d]_-{\varphi_{MU}}\\
\Z[J_n] \ar[rr]^-{S^{-1}\Phi\circ\bar{\iota}} & &S^{-1}MU^*(BT^n_+)
} \]
which is a pullback square by Proposition~\ref{P:Z} and Theorem~\ref{T:Han}. This then gives us the following fixed point formula:
\begin{equation}\label{e:utg}
(\Phi\circ\iota)[M^{2m}]=\sum_{p\in M^{T^n}}\sigma(p)\prod_{i=1}^m \frac{1}{e(V_i(p))}\in MU^*(BT^n_+).
\end{equation}

\section{Equivariant $K$-Theory Characteristic Numbers}\label{KTHY}

In this section we prove that there is an isomorphism between $\mathcal{Z}_{2n}^{U:T^n}$ and $K_n$, the abelian group of faithful exterior polynomials $h\in \Lambda^n(J_n)$ such that $d(h^*)=0$. This uses equivariant $K$-theory characteristic numbers and what we have previously discussed.

We follow the introduction to this subject given by Hattori in~\cite{Hat74}. We have a product structure on the set of isomorphism classes of irreducible $T^n$-representations, given by tensor product, and we can identify $K^*(BT^n_+)$ with the ring of all (possibly infinite) sums $\sum m_kV_k$, where $m_k\in \Z$ and $V_k\in \Hom(T^n,S^1)$.

\begin{definition}
Let $\mathcal{K}$ be the kernel of the augmentation
\begin{align*}
d\colon K^*(BT^n_+)&\longrightarrow \Z\\
\sum m_kV_k&\longmapsto \sum m_k.
\end{align*}
\end{definition}

Let $\mathbf{t}=(t_1,t_2,\dots)$ be a sequence of indeterminates and $V$ a $T^n$-representation. Define
\[
\gamma_t(V-\dim V)\letbe \prod_{i=1}^{\dim V}(1+t_1(V_i-1)+t_2(V_i-1)^2+\cdots)
\]
in $K^*(BT^n_+)\llbracket \mathbf{t} \rrbracket$, where $V=\bigoplus_{i=1}^{\dim V}V_i$. Naturally, $\gamma_t$ extends to a map
\[
\gamma_t\colon \mathcal{K}\longrightarrow K^*(BT^n_+)\llbracket \mathbf{t} \rrbracket
\]
such that
\[
\gamma_t(x+y)=\gamma_t(x)\gamma_t(y).
\]

Let $S\subset K^*(BT^n_+)$ denote the multiplicative subset generated by the $K$-theory Euler classes
\[
\lambda_{-1}(V)=\sum_{i\geq 0} (-1)^i \lambda^i(V)
\]
of the bundles $ET^n\times_{T^n} V\to BT^n$, for $V\in J_n$.

We now extend the commutative square~\eqref{e:sq} to the right using a Boardman homomorphism. That is, we have a natural transformation of multiplicative cohomology theories
\[
B\colon MU^*(BT^n_+)\longrightarrow K^*(BT^n_+)\llbracket \mathbf{t} \rrbracket
\]
uniquely characterised by the formula
\begin{equation}\label{e:Boardman}
B(e(V))=\lambda_{-1}(V)(\gamma_t(V-1))^{-1}
\end{equation}
for $V$ a non-trivial irreducible $T^n$-representation and $e(V)\in MU^2(BT^n_+)$ the Euler class of the bundle $ET^n\times_{T^n} V\to BT^n$. Hattori proves the following:

\begin{theorem}[\cite{Hat74}]
The commutative square
\[ \xymatrix{
\Omega_*^{U:T^n} \ar[r]^-{B\circ \Phi} \ar[d]_-{\varphi_{\Omega}} &K^*(BT^n_+)\llbracket \mathbf{t} \rrbracket \ar[d]_-{\varphi_K}\\
\mathcal{F}_*^{T^n} \ar[r]^-{} &S^{-1}K^*(BT^n_+)\llbracket \mathbf{t} \rrbracket
} \]
has all maps injective and is a pullback square.
\end{theorem}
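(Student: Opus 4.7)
The plan is to deduce this theorem from Hanke's analogous result (Theorem~\ref{T:Han}) together with the Hattori--Stong rigidity theorem that identifies $MU^*(BT^n_+)$ inside $K^*(BT^n_+)\llbracket \mathbf{t}\rrbracket$ via the Boardman homomorphism. First I would verify commutativity of the square: the right vertical map $\varphi_K$ is induced by localisation at the Boardman images of the Euler classes generating $S\subset MU^*(BT^n_+)$, which by~\eqref{e:Boardman} differ from the $K$-theoretic Euler classes $\lambda_{-1}(V)$ only by units in $K^*(BT^n_+)\llbracket \mathbf{t}\rrbracket$. Thus there is a unique natural multiplicative transformation along the bottom row making both the Hattori square and the ``Boardman square'' over Hanke's square commute.

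For injectivity, Hanke's Theorem~\ref{T:Han} already supplies injectivity of $\varphi_\Omega$, $\Phi$, and $\varphi_{MU}$. To upgrade $\Phi$ to $B\circ \Phi$ I would invoke the Hattori--Stong theorem, which asserts that the Boardman transformation is injective on $MU^*(BT^n_+)$ (equivalently, that stably complex bordism classes are detected by their $K$-theory characteristic numbers, packaged via the gamma-filtration formal variables $\mathbf{t}$). Injectivity of $\varphi_K$ follows because the multiplicative set generated by $\lambda_{-1}(V)$, $V\in J_n$, consists of non-zero-divisors in $K^*(BT^n_+)\llbracket \mathbf{t}\rrbracket\cong \Z\llbracket V_1-1,\dots,V_n-1\rrbracket\llbracket\mathbf{t}\rrbracket$, which is an integral domain in each completed degree.

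For the pullback property, suppose $y\in K^*(BT^n_+)\llbracket \mathbf{t}\rrbracket$ and $z\in \mathcal{F}_*^{T^n}$ have the same image in $S^{-1}K^*(BT^n_+)\llbracket \mathbf{t}\rrbracket$. I would first produce a class $w\in MU^*(BT^n_+)$ with $B(w)=y$. To do this, note that $\bar\iota(z)\in S^{-1}MU^*(BT^n_+)$ provides a natural candidate for $\varphi_{MU}(w)$; the Hattori--Stong congruences cutting out the image of $B$ are compatible with the localisation at $S$, so the congruences that $\bar\iota(z)$ satisfies descend to show that $y$ lies in the image of $B$. By commutativity and injectivity of $\varphi_{MU}$ one obtains $\varphi_{MU}(w)=S^{-1}\Phi(z)$, and Hanke's pullback then yields the desired $x\in \Omega_*^{U:T^n}$ with $\Phi(x)=w$ and $\varphi_\Omega(x)=z$. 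Applying $B$ to $\Phi(x)=w$ gives $(B\circ \Phi)(x)=y$ as required.

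The main technical obstacle is the Hattori--Stong step in the pullback argument: one must confirm that the Hattori--Stong congruences cut out the image of $B$ in a sufficiently local manner that being satisfied after inverting $S$, together with the presence of a compatible class in $\mathcal{F}_*^{T^n}$, already forces $y$ to lie in $B(MU^*(BT^n_+))$. This is precisely the content of Hattori's refinement of Stong's theorem for the classifying space $BT^n$; once it is in hand, the equivariant pullback square follows formally by stacking it on top of Hanke's pullback.
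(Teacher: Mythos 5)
The paper supplies no proof of this theorem: it is cited directly to Hattori~\cite{Hat74}, so there is no internal argument to compare against and I can only assess your proposal on its own merits. Your strategy is to paste Hanke's square (Theorem~\ref{T:Han}) with a non-equivariant ``Boardman square'' whose horizontal arrows are $B\colon MU^*(BT^n_+)\to K^*(BT^n_+)\llbracket\mathbf{t}\rrbracket$ and its localisation $S^{-1}B$, and whose vertical arrows are $\varphi_{MU}$ and $\varphi_K$. The commutativity and injectivity parts of this discussion are essentially sound, once you fix the slip ``$\bar\iota(z)$'': for $z\in\mathcal{F}_*^{T^n}$ you mean $S^{-1}\Phi(z)$, since $\bar\iota$ maps $\Z[J_n]$ \emph{into} $\mathcal{F}_*^{T^n}$, not out of it.

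The genuine gap is in the pullback step. For the pasting lemma to deliver the Hattori square as a pullback you need the Boardman square itself to be a pullback; concretely, whenever $\eta\in S^{-1}MU^*(BT^n_+)$ has $(S^{-1}B)(\eta)$ lying in the image of $\varphi_K$, you must produce $w\in MU^*(BT^n_+)$ with $\varphi_{MU}(w)=\eta$ (injectivity then gives $B(w)=y$ for free). This is not the Hattori--Stong theorem in any standard form: that theorem asserts injectivity of the Boardman transformation and cuts out $\mathrm{Im}(B)$ by integrality congruences, but it says nothing about whether membership in $\mathrm{Im}(B)$ can be tested after inverting the Euler classes in $S$, nor about whether $B$ reflects divisibility by elements of $S$. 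Nor can you bootstrap it from the statement being proved: the pasting lemma runs only one way (right square and composite pullbacks imply the left square is a pullback), and you have the left (Hanke) square and are trying to deduce the right one, which is invalid. You flag this yourself as ``the main technical obstacle,'' but the phrase ``precisely the content of Hattori's refinement of Stong's theorem for $BT^n$'' is an unsupported assertion standing in for exactly the argument that is missing. As written, your proposal reduces the theorem to a nontrivial, unproven non-equivariant claim about $S$-locality of $\mathrm{Im}(B)$ inside $K^*(BT^n_+)\llbracket\mathbf{t}\rrbracket$, rather than proving it; you would need to extract this from~\cite{Hat74} or prove it independently. (As a historical aside, Hanke's 2005 result cannot underlie Hattori's 1974 proof, so even if completed, this route would differ materially from the source being cited.)
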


The coefficients of $(B\circ \Phi)[M]$ are known as \emph{equivariant $K$-theory characteristic numbers} for $M$.

By defining $\Psi$ to be the composition of $B\circ\Phi$ with the inclusion $\iota\colon\mathcal{Z}_*^{U:T^n}\to \Omega_*^{U:T^n}$, Proposition~\ref{P:Z} tells us that
\begin{equation}\label{sq}
\begin{aligned}
\xymatrix{
\mathcal{Z}_*^{U:T^n} \ar[r]^-{\Psi} \ar[d]_-{\varphi_{Z}} &K^*(BT^n_+)\llbracket \mathbf{t} \rrbracket \ar[d]_-{\varphi_K}\\
\Z[J_n] \ar[r]^-{S^{-1}\Psi} &S^{-1}K^*(BT^n_+)\llbracket \mathbf{t} \rrbracket
}
\end{aligned}
\end{equation}
is a pullback square with all maps injective and, by using~\eqref{e:utg} and~\eqref{e:Boardman}, we obtain a formula
\[
\Psi[M^{2m}]=\sum_{p\in M^{T^n}}\sigma(p)\prod_{i=1}^m \frac{\gamma_t(V_i(p)-1)}{1-V_i(p)},
\]
where we write $T_pM=V_1(p)\oplus\dots\oplus V_m(p)$, for $p\in M^{T^n}$.

\subsection{The Torus Manifold Case}

We now restrict our attention to the case where $m=n$. Recalling Theorem~\ref{T:tgraph} notice that we have a map
\begin{equation}\label{e:him}
\tilde{g}\colon \mathcal{Z}_{2n}^{U:T^n}\longrightarrow K_n,
\end{equation}
where $\tilde{g}[M]\letbe g(\Gamma_M,\alpha_M)$, that is the torus polynomial of the oriented torus graph associated to the stably complex torus manifold $M$. This can be seen to be well-defined by considering the commutative diagram
\begin{equation}\label{e:gg}
\begin{aligned}
\xymatrix{
& \mathcal{Z}_{2n}^{U:T^n} \ar[ld]_-{\tilde{g}} \ar[rd]^-{\varphi_Z} & \\
K_n \ar[rr]^-{f} & & \Z[J_n]
}
\end{aligned}
\end{equation}
where $f(s_1\wedge\dots\wedge s_n)\letbe \det [s_1 \cdots s_n] s_1\cdots s_n$ for a faithful monomial $s_1\wedge\dots\wedge s_n\in \Lambda^n(J_n)$. It also follows that $\tilde{g}$ is monic as $\varphi_Z$ is. We now consider the following diagram in our bid to show that $\tilde{g}$ is an isomorphism:

\begin{equation}
\begin{aligned}
\xymatrix{
&\mathcal{Z}_{2n}^{U:T^n} \ar[rr]^-{\Psi} \ar[dd]_-{\varphi_{Z}} \ar[ld]_-{\tilde{g}} & &K^*(BT^n_+)\llbracket \mathbf{t} \rrbracket \ar[dd]_-{\varphi_K}\\
K_n \ar[rd]_-{f} & & &\\
&\Z[J_n] \ar[rr]^-{S^{-1}\Psi} & &S^{-1}K^*(BT^n_+)\llbracket \mathbf{t} \rrbracket
}
\end{aligned}
\end{equation}

Let $h\in K_n$. Then by Theorem~\ref{T:tgraph} there is an oriented $n$-torus graph $(\Gamma,\alpha,\sigma)$ such that $h=g(\Gamma,\alpha)$. We can write
\[
h=\sum_{p\in \mathcal{V}(\Gamma)} \alpha(e_1^p)\wedge\dots\wedge \alpha(e_n^p),
\]
where we order $\mathcal{E}(\Gamma)_p=\{ e_1^p,\dots,e_n^p\}$ so that $\det [\alpha(e_1^p)\cdots\alpha(e_n^p)]=\sigma(p)$. We then have that
\begin{equation}\label{f}
f(h)=\sum_{p\in \mathcal{V}(\Gamma)} \sigma(p) \prod_{i=1}^n \alpha(e_i^p)\in \Z[J_n]
\end{equation}
and
\begin{equation}\label{e:Psi}
(S^{-1}\Psi\circ f)(h)=\sum_{p\in \mathcal{V}(\Gamma)} \sigma(p) \prod_{i=1}^n \frac{\gamma_t(\alpha(e_i^p)-1)}{1-\alpha(e_i^p)}\in S^{-1}K^*(BT^n_+)\llbracket \mathbf{t} \rrbracket.
\end{equation}

The following theorem tells us that any exterior polynomial $f(h)$ coming from an oriented torus graph, as in~\eqref{f}, is in the image of $\varphi_Z$. The proof is an adaption of~\cite[Theorem 1.1]{GZ01} from GKM-graphs to torus graphs.

\begin{theorem}\label{T:psi}
Every polynomial $h\in K_n$ satisfies
\[
(S^{-1}\Psi\circ f)(h)\in K^*(BT^n_+)\llbracket \mathbf{t} \rrbracket.
\]
\end{theorem}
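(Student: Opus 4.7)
The plan is to show that $(S^{-1}\Psi \circ f)(h)$ has no poles, i.e.\ that the a priori denominators coming from the Euler classes $1-V$ for $V\in J_n$ all cancel in the sum. Since the multiplicative set $S$ is generated by $\{1-V : V\in J_n\}$, it suffices to fix an arbitrary $V\in J_n$ and verify that the expression~\eqref{e:Psi} is regular at $1-V=0$ (equivalently, has no pole along the divisor where $V=1$ in the coefficient ring $K^*(BT^n_+)$). I would argue this pole-by-pole, $V$ by $V$.

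For a fixed $V\in J_n$, the only vertices $p\in\mathcal{V}(\Gamma)$ that contribute a pole at $1-V$ are those for which some $\alpha(e_i^p)\in\{V,V^{-1}\}$. By Definition~\ref{D:taf}(1)--(2), at each such $p$ there is a unique index $i$ with $\alpha(e_i^p)=\pm V$, and the edge $e_i^p$ determines a neighbouring vertex $q=t(e_i^p)$ at which $\alpha(\bar{e}_i^p)=\pm V$ is likewise the unique label of this form. So the set of $V$-polar vertices is partitioned into ordered pairs $(p,q)$ along edges labelled by $\pm V$, and the plan is to show that for each such pair the sum of the two vertex terms in~\eqref{e:Psi} is regular at $1-V=0$.

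For a chosen pair $(p,q)$, write
\[
\mu_p = V\wedge \alpha(e_2^p)\wedge\cdots\wedge \alpha(e_n^p),\qquad \mu_q = (\pm V)\wedge \alpha(e_2^q)\wedge\cdots\wedge\alpha(e_n^q),
\]
with orderings chosen as in~\eqref{E:tp}. Definition~\ref{D:taf}(3) gives $\alpha(e_j^q)\equiv \alpha(e_{\tau(j)}^p)\pmod V$ for some permutation $\tau$, so that after the substitution $V=1$ in $K^*(BT^n_+)\llbracket\mathbf{t}\rrbracket$ the numerator $\gamma_t$-factors away from the $V$-slot coincide (up to factors of the form $\gamma_t(W V^k-1)=\gamma_t(W-1)\bmod (1-V)$). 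Simultaneously, by the orientation condition in Definition~\ref{D:o} together with~\eqref{E:tp}, the relation $\alpha(\bar e)=\pm\alpha(e)$ is tied to $\sigma(p)=\mp\sigma(q)$, and a direct calculation (using $1-V^{-1}=-V^{-1}(1-V)$ in the case $\alpha(\bar e)=-V$) shows that the residues at $1-V$ from $p$ and $q$ are equal in magnitude and opposite in sign. Summing over the pair therefore yields a term regular at $1-V$, and summing over all pairs gives regularity of $(S^{-1}\Psi\circ f)(h)$ at the divisor $V=1$. Since $V\in J_n$ was arbitrary, the conclusion follows.

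The main obstacle will be the sign bookkeeping in the final step: one must track the two cases $\alpha(\bar e)=\pm\alpha(e)$, the orientation signs $\sigma(p),\sigma(q)$, and the transpositions that arise from reordering the edge labels at $q$ so that $\pm V$ appears in the first slot. This is essentially the same combinatorial content as in the proof of Theorem~\ref{T:tgraph}, where the identity $(\mu_p^*)_e+(\mu_q^*)_{\bar e}=0$ was established from the same data, and the residue cancellation here is the ``$\Psi$-image'' of that identity. Once the case analysis is organised as in the $(\Rightarrow)$ direction of Theorem~\ref{T:tgraph}, the cancellation should reduce to an application of $d(h^*)=0$ viewed term-by-term along edges labelled by $V$.
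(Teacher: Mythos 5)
Your strategy is close to the paper's: both proofs partition the vertices by the edges shared with a fixed ``direction'', pair the vertices of $V_2$ across those edges, use Definition~\ref{D:taf}(3) to get congruences modulo the Euler class of the shared edge, and use Definition~\ref{D:o} to split into the two cases $\alpha(\bar e)=\pm\alpha(e)$ with the corresponding sign relation $\sigma(p)=\mp\sigma(q)$. Your explicit residue computation, including the manipulation $1-V^{-1}=-V^{-1}(1-V)$, is essentially the same as the paper's congruence computation leading to equations~\eqref{pq2} and~\eqref{pq3}. So the core cancellation mechanism is right and matches the paper.

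There are, however, two genuine gaps. First, the opening claim --- that for fixed $V$ the only vertices contributing a pole at $1-V$ are those with some $\alpha(e_i^p)\in\{V,V^{-1}\}$ --- is false. If $\alpha(e_i^p)=mV$ with $|m|>1$, then $1-V$ divides $1-\alpha(e_i^p)$ in $K^*(BT^n_+)$ (e.g.\ $1-V^m=(1-V)(1+V+\cdots+V^{m-1})$), so that vertex also contributes a pole at $1-V$, yet it is excluded from your pairing. The paper avoids this by first fixing a set of primitive directions $\alpha_1,\dots,\alpha_N$ and lcm exponents $M_j$, writing the common denominator as $\prod_j(1-M_j\alpha_j)$, and then defining $V_2$ to be all vertices with \emph{some} edge labelled by \emph{any} nonzero multiple of $\alpha_1$; the pairing is then across those edges. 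Your argument would need the same reorganisation by primitive direction rather than by exact label.

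Second, even granting that each edge-pair contributes something with the corresponding Euler-class factor removed, the step ``summing over all pairs gives regularity of $(S^{-1}\Psi\circ f)(h)$'' is exactly the divisibility-of-a-product issue, and cannot be dispatched pole-by-pole: showing $(1-M_j\alpha_j)$ divides the numerator for each $j$ separately does not by itself show the product $\prod_j(1-M_j\alpha_j)$ divides it, because $K^*(BT^n_+)$ is a power series ring and these elements need not be pairwise coprime in an obvious way. The paper closes this gap by citing Lemma~4.2 of~\cite{GZ01}, which guarantees that for linearly independent weights $\beta_1,\dots,\beta_n$ the factors $(1-\beta_j)$ divide jointly. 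Some version of this lemma (and the linear-independence input that the $\alpha_j$ supply) is indispensable, and its absence is a real hole in your argument as written.
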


\begin{proof}
Let $\alpha_1,\dots,\alpha_N$ be elements of $\Hom (T^n,S^1)$ such that for every $e\in \mathcal{E}(\Gamma)$, there exists a unique $k\in \{ 1,\dots,N\}$, such that $\alpha(e)$ is a multiple of $\alpha_k$. If $m_1\alpha_1,\dots,m_s\alpha_1$ are all the occurrences of multiples of $\alpha_1$ among all weights, let $M_1\letbe \text{lcm}\, (m_1,\dots,m_s)$. Similarly define $M_2,\dots,M_n$. Then
\[
(S^{-1}\Psi\circ f)(h)=\frac{l}{\prod_{j=1}^N (1-M_j\alpha_j)}\ ,
\]
with $l\in K^*(BT^n_+)\llbracket \mathbf{t} \rrbracket$. We will show that $(1-M_1\alpha_1)$ divides $l$ in $K^*(BT^n_+)\llbracket \mathbf{t} \rrbracket$.

We divide the vertices of $\Gamma$ into two.
\begin{enumerate}
\item Let $V_1$ be the subset that contains the vertices $p\in \mathcal{V}(\Gamma)$ for which $\alpha(e_i^p)$ is not a multiple of $\alpha_1$, for $1\leq i\leq n$.
\item Let $V_2\letbe \mathcal{V}(\Gamma)\smallsetminus V_1$.
\end{enumerate}

The part of~\eqref{e:Psi} that corresponds to vertices in $V_1$ is then of the form
\begin{equation}\label{v1}
\sum_{p\in V_1}\sigma(p)\prod_{i=1}^n  \frac{\gamma_t(\alpha(e_i^p)-1)}{1-\alpha(e_i^p)}=\frac{l_1}{\prod_{j=2}^N (1-M_j\alpha_j)}
\end{equation}
with $l_1\in K^*(BT^n_+)\llbracket \mathbf{t} \rrbracket$.

If $p\in V_2$, then there exists an edge $e$ with $i(e)=p$ such that $\alpha(e)=m\alpha_1$, with $m\in \Z\smallsetminus \{ 0\}$. Note that there is exactly one edge in $\mathcal{E}(\Gamma)_p$ with this property. Let $q=t(e)$. Since $\alpha(\bar{e})=\pm \alpha(e)\in \Hom(T^n,S^1)$ we have that $q\in V_2$. So the vertices in $V_2$ can be grouped in pairs $(p=i(e),q=t(e))$ with $\alpha(e)$ a multiple of $\alpha_1$.

Write
\[
\mathcal{E}(\Gamma)_p=\{ e_1^p,\dots,e_n^p\}\quad\text{and}\quad \mathcal{E}(\Gamma)_q=\{ e_1^q,\dots,e_n^q\}
\]
such that $e=e_1^p$, $\bar{e}=e_1^q$ and $\alpha(e_i^q)\equiv \alpha(e_i^p)\ \text{mod}\ \alpha(e)$, for $2\leq i\leq n$. We then have that
\begin{equation}\label{cong1}
1-\alpha(e_i^p)\equiv 1-\alpha(e_i^q)\ (\text{mod}\ 1-\alpha(e))\in K^*(BT^n_+),
\end{equation}
and
\[
\gamma_t(\alpha(e_i^q)-1)\equiv \gamma_t(\alpha(e_i^p)-1)\ (\text{mod}\ 1-\alpha(e))\in K^*(BT^n_+)\llbracket \mathbf{t}\rrbracket.
\]
Setting
\[
f_p=\prod_{i=1}^n \gamma_t(\alpha(e_i^p)-1)\quad\text{and}\quad f_q=\prod_{i=1}^n \gamma_t(\alpha(e_i^q)-1).
\]
we deduce that
\begin{equation}\label{cong2}
f_q\equiv f_p\ (\text{mod}\ 1-\alpha(e)).
\end{equation}
The part of~\eqref{e:Psi} corresponding to $p$ and $q$ is
\begin{equation}\label{pq}
\sigma(p)\frac{f_p}{\prod_{i=1}^n (1-\alpha(e_i^p))}+\sigma(q)\frac{f_q}{\prod_{i=1}^n (1-\alpha(e_i^q))}.
\end{equation}

\paragraph{\emph{Case 1}}

Suppose $\sigma(p)=\sigma(q)$. This implies that $\alpha(e)=-\alpha(\bar{e})\in \Hom(T^n,S^1)$ and~\eqref{pq} can be expressed as
\begin{equation}\label{pq2}
\sigma(p)\frac{f_p\prod_{i=2}^n (1-\alpha(e_i^q))-\alpha(e)f_q\prod_{i=2}^n (1-\alpha(e_i^p))}{(1-\alpha(e))\prod_{i=2}^n (1-\alpha(e_i^p))(1-\alpha(e_i^q))}.
\end{equation}
Then the congruences~\eqref{cong1} and~\eqref{cong2} imply that $(1-\alpha(e))$ divides the numerator of~\eqref{pq2}.

\paragraph{\emph{Case 2}}

Suppose $\sigma(p)\ne\sigma(q)$. This implies that $\alpha(e)=\alpha(\bar{e})\in \Hom(T^n,S^1)$ and~\eqref{pq} can be expressed as
\begin{equation}\label{pq3}
\sigma(p)\frac{f_p\prod_{i=2}^n (1-\alpha(e_i^q))-f_q\prod_{i=2}^n (1-\alpha(e_i^p))}{(1-\alpha(e))\prod_{i=2}^n (1-\alpha(e_i^p))(1-\alpha(e_i^q))}.
\end{equation}
Then the congruences~\eqref{cong1} and~\eqref{cong2} imply that $(1-\alpha(e))$ divides the numerator of~\eqref{pq3}.

We can therefore deduce that~\eqref{pq} can be written as
\[
\frac{l_{pq}}{\prod_{j=2}^N (1-M_j\alpha_j)}
\]
with $l_{pq}\in K^*(BT^n_+)\llbracket \mathbf{t}\rrbracket$. Therefore,
\begin{equation}\label{v2}
\sum_{p\in V_2} \sigma(p)\prod_{i=1}^n  \frac{\gamma_t(\alpha(e_i^p)-1)}{1-\alpha(e_i^p)}=\frac{l_2}{\prod_{j=2}^N (1-M_j\alpha_j)}
\end{equation}
with $l_2\in K^*(BT^n_+)\llbracket \mathbf{t}\rrbracket$. Adding~\eqref{v1} and~\eqref{v2} we obtain
\[
(S^{-1}\Psi\circ f)(h)=\frac{l}{\prod_{j=1}^N (1-M_j\alpha_j)}=\frac{l_1+l_2}{\prod_{j=2}^N (1-M_j\alpha_j)}
\]
with $l_1+l_2\in K^*(BT^n_+)\llbracket \mathbf{t}\rrbracket$; hence $(1-M_1\alpha_1)$ divides $l$. The same argument can be used to show that each $(1-M_j\alpha_j)$ divides $h$. The proof now follows from the following lemma:

\begin{lemma}[{\cite[Lemma 4.2]{GZ01}}]
If $P\in K^*(BT^n_+)$ and $\beta_1,\dots,\beta_n$ are linearly independent weights such that $(1-\beta_j)$ divides $P$ for all $j=1,\dots,n$, then
\[
(1-\beta_1)\cdots(1-\beta_n)\ \text{divides}\ P.
\]
\end{lemma}
\end{proof}

We can now deduce:

\begin{corollary}\label{C:*}
The homomorphism of abelian groups
\[
\tilde{g}\colon \mathcal{Z}_{2n}^{U:T^n}\longrightarrow K_n
\]
is an isomorphism, for all $n\geq 0$.
\end{corollary}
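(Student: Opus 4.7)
The proof is essentially a consolidation: all the hard work has already been done in Theorem~\ref{T:tgraph} (characterising torus polynomials) and Theorem~\ref{T:psi} (the integrality statement). What remains is to assemble them using the pullback square~\eqref{sq} together with the commutative triangle~\eqref{e:gg}.

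First I would record that injectivity of $\tilde g$ is already in hand: since $\varphi_Z = f\circ\tilde g$ by diagram~\eqref{e:gg} and $\varphi_Z$ is a monomorphism by Proposition~\ref{P:Z}, the map $\tilde g$ must be injective. So the content of the corollary is surjectivity.

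Next I would verify that $f\colon K_n\to\Z[J_n]$ is itself injective, which is a short algebraic check: a faithful monomial $s_1\wedge\cdots\wedge s_n$ is determined up to sign by the unordered basis $\{s_1,\dots,s_n\}\subset J_n$, and this set is in turn recovered from the commutative monomial $s_1\cdots s_n\in\Z[J_n]$, while the determinant $\det[s_1\cdots s_n]=\pm 1$ pins down the sign. Permutations of the $s_i$ change both the wedge and the determinant by the same sign, so $f$ is well defined and injective on the basis of faithful monomials, hence on all of $K_n$.

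The heart of the argument is surjectivity. Given $h\in K_n$, I would feed $f(h)\in\Z[J_n]$ into the pullback square~\eqref{sq}. By Theorem~\ref{T:psi} we have
\[
(S^{-1}\Psi\circ f)(h)\in K^{*}(BT^n_+)\llbracket\mathbf{t}\rrbracket,
\]
i.e.\ the image of $f(h)$ along the bottom of~\eqref{sq} lies in the image of $\varphi_K$. Since~\eqref{sq} is a pullback square with all maps injective, this forces $f(h)$ to lie in the image of $\varphi_Z$: there exists $x\in\mathcal{Z}_{2n}^{U:T^n}$ with $\varphi_Z(x)=f(h)$. Chasing around~\eqref{e:gg} gives $f(\tilde g(x))=\varphi_Z(x)=f(h)$, and the injectivity of $f$ established above then yields $\tilde g(x)=h$.

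The only place where any real work is hidden is the appeal to Theorem~\ref{T:psi}, so I would expect the main obstacle in a stand-alone proof of the corollary to be the integrality verification performed there (pairing up vertices connected by an edge labelled by a multiple of a given $\alpha_j$ and using the GKM-style divisibility of the numerator by $1-\alpha(e)$). Since that theorem is already available, the corollary itself drops out as a formal pullback argument plus the elementary injectivity of $f$.
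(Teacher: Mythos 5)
Your proof is correct and follows the paper's own argument: injectivity from $\varphi_Z = f\circ\tilde g$, surjectivity by pushing $f(h)$ through the pullback square~\eqref{sq} via Theorem~\ref{T:psi}. The one thing you add is the explicit check that $f$ is injective, which the paper uses silently when it passes from ``$f(h)$ lies in the image of $\varphi_Z$'' to ``$h$ lies in the image of $\tilde g$''; making that step explicit is a genuine (if small) improvement in rigour.
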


\begin{proof}
We know that $\tilde{g}$ is injective as $\varphi_Z$ is, see~\eqref{e:gg}. By Theorem~\ref{T:psi}
\[
(S^{-1}\Psi\circ f)(h)\in K^*(BT^n_+)\llbracket \mathbf{t} \rrbracket,
\]
for every $h\in K_n$, and since~\eqref{sq} is a pullback square we have that $h\in \tilde{g}(\mathcal{Z}_{2n}^{U:T^n})$. Therefore, $\tilde{g}$ is surjective.
\end{proof}

Define the graded ring
\begin{equation}\label{eq:Xi}
\Xi_*\letbe \bigoplus_{n\geq 0}\mathcal{Z}_{2n}^{U:T^n},
\end{equation}
where the multiplication is given by $[M_1^{2n_1}]\cdot [M_2^{2n_2}]=[M^{2n_1}_1\times M^{2n_2}_2]$ in a natural way such that the $T^{n_1+n_2}=T^{n_1}\times T^{n_2}$-action is given by
\[
((t_1,t_2),(x_1,x_2))\longmapsto (t_1x_1,t_2x_2),
\]
for $x_i\in M_i$ and $t_i\in T^{n_i}$. We note that this multiplication depends on the ordering of the cartesian product of $M^{n_1}$ with $T^{n_1}$-action and $M^{n_2}$ with $T^{n_2}$-action. The $T^{n_1+n_2}$-action on $M_1^{2n_1}\times M_2^{2n_2}$ and $M_2^{2n_2}\times M_1^{2n_1}$ are not equivariantly bordant, except when $[M_1^{2n_1}]=[M_2^{2n_2}]$, but differ up to an automorphism of $T^{n_1+n_2}$. We therefore obtain that $\Xi_*$ is a non-commutative graded ring with unit given by a point.

Similarly, let
\[
K_*\letbe \bigoplus_{n\geq 0} K_n
\]
with the obvious product structure and observe that this too is a non-commutative graded ring with unit. Following from Corollary~\ref{C:*} we see that:

\begin{theorem}\label{T:g}
The homomorphism of non-commutative graded rings
\[
\tilde{g}\colon \Xi_*\longrightarrow K_*
\]
is an isomorphism.
\end{theorem}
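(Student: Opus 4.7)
The plan is to bootstrap off Corollary~\ref{C:*}: since $\tilde{g}$ is already established to be a graded isomorphism of abelian groups degree by degree, the only thing left to verify is that $\tilde{g}$ is multiplicative with respect to the product structures on $\Xi_*$ and $K_*$. In other words, given classes $[M_1^{2n_1}]\in \mathcal{Z}_{2n_1}^{U:T^{n_1}}$ and $[M_2^{2n_2}]\in \mathcal{Z}_{2n_2}^{U:T^{n_2}}$, I need to show
\[
\tilde{g}\bigl([M_1]\cdot[M_2]\bigr)=\tilde{g}[M_1]\wedge \tilde{g}[M_2]\in \Lambda^{n_1+n_2}(J_{n_1+n_2}).
\]

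The first step is to identify the oriented torus graph of the product $M_1\times M_2$ equipped with the $T^{n_1}\times T^{n_2}$-action. Because the action is componentwise, the fixed point set decomposes as $(M_1\times M_2)^{T^{n_1+n_2}}=M_1^{T^{n_1}}\times M_2^{T^{n_2}}$, so the vertex set of $\Gamma_{M_1\times M_2}$ is $\mathcal{V}(\Gamma_{M_1})\times \mathcal{V}(\Gamma_{M_2})$. At a fixed point $(p,q)$ the tangent space splits as $T_{(p,q)}(M_1\times M_2)\cong T_pM_1\oplus T_qM_2$, and Proposition~\ref{P:segal} together with the inclusions $\Hom(T^{n_1},S^1),\Hom(T^{n_2},S^1)\hookrightarrow \Hom(T^{n_1+n_2},S^1)$ shows that the weights at $(p,q)$ are precisely the weights $V_i(p)$ at $p$ (supported on the first $n_1$ coordinates) together with the weights $W_j(q)$ at $q$ (supported on the last $n_2$ coordinates). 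The edges of $\Gamma_{M_1\times M_2}$ through $(p,q)$ are the two bouquets inherited from $\Gamma_{M_1}$ at $p$ and $\Gamma_{M_2}$ at $q$.

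Next I would verify the sign condition. The stably complex structure on $M_1\times M_2$ is the direct sum of those on the two factors, so the induced orientation on $T_{(p,q)}(M_1\times M_2)$ is the product of the orientations, as is the complex orientation arising from the isotropy representation. Comparing the two via Definition~\ref{D:sign} gives $\sigma(p,q)=\sigma(p)\sigma(q)$. Choosing orderings of the weights at $p$ and at $q$ compatible with~\eqref{E:tp}, the concatenated ordering at $(p,q)$ is also compatible with~\eqref{E:tp} because the determinant of a block-diagonal matrix is the product of the block determinants. Consequently
\[
\mu_{(p,q)}=\mu_p\wedge \mu_q\in \Lambda^{n_1+n_2}(J_{n_1+n_2}),
\]
and summing over all pairs of vertices yields $g(\Gamma_{M_1\times M_2},\alpha_{M_1\times M_2})=g(\Gamma_{M_1},\alpha_{M_1})\wedge g(\Gamma_{M_2},\alpha_{M_2})$, which is exactly the multiplicativity statement. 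The unit element of $\Xi_*$ (a point with trivial action) maps to the generator $1\in \Lambda^0(J_0)=K_0$, so units are preserved.

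There is essentially no hard obstacle here, since the non-commutativity of both rings matches up in precisely the same way (the exterior product on $K_*$ picks up the same sign from swapping the two ordered sets of weights as the bordism product picks up from relabelling the torus factors, as indicated in the paragraph defining $\Xi_*$). The only point requiring care is the bookkeeping of orientations and orderings in the computation of $\mu_{(p,q)}$, but this reduces to the elementary block-determinant identity mentioned above. Combined with the degree-wise bijectivity from Corollary~\ref{C:*}, this proves that $\tilde{g}\colon \Xi_*\to K_*$ is an isomorphism of non-commutative graded rings.
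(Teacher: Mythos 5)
Your proposal is correct and follows the same route as the paper: Corollary~\ref{C:*} supplies degree-wise bijectivity, and the ring isomorphism then amounts to checking multiplicativity. The paper treats this last step as implicit in the definitions of $\Xi_*$ and $K_*$ and simply states that the theorem follows from the corollary, whereas you spell out the verification—fixed-point set decomposing as a product, weights at $(p,q)$ being the concatenation of the weights at $p$ and $q$, the sign identity $\sigma(p,q)=\sigma(p)\sigma(q)$, and the block-determinant compatibility giving $\mu_{(p,q)}=\mu_p\wedge\mu_q$. This is a correct and useful expansion of an argument the paper leaves to the reader; your remark that the exterior product and the bordism product fail to commute in matching ways (both up to a coordinate permutation of the torus factors) is also accurate and is precisely what makes $\tilde{g}$ a homomorphism of non-commutative rings rather than merely of graded abelian groups.
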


Notice that a non-zero exterior polynomial $h\in K_n$ must have at least $n+1$ monomials since $d(h^*)=0$. This immediately leads to the following result:

\begin{corollary}
As a strict lower bound, $n+1$ is the minimum number of fixed points for a non-bounding stably complex torus manifold of dimension $2n$.
\end{corollary}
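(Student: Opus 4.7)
The plan is to deduce the corollary by combining Theorem~\ref{T:g} with the preceding combinatorial observation that every non-zero $h \in K_n$ has at least $n+1$ monomials in reduced form. Under the isomorphism $\tilde{g}$, a non-bounding class $[M^{2n}] \in \mathcal{Z}_{2n}^{U:T^n}$ is sent to the non-zero torus polynomial $g(\Gamma_M,\alpha_M) = \sum_{p \in M^{T^n}} \mu_p$; since the number of distinct non-cancelling monomials appearing in this sum is bounded above by the number of vertices $|\mathcal{V}(\Gamma_M)| = |M^{T^n}|$, any combinatorial lower bound on the number of monomials of $h \in K_n$ translates directly into a lower bound on the number of fixed points of $M$.

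The combinatorial step proceeds as follows. Pass to the dual $h^* \in \Lambda^n(J_n^*)$, which has the same number of monomials as $h$, and write it in reduced form so that its monomials correspond to distinct $n$-element subsets of $J_n^*$ indexing faithful bases. Form a graph $G$ whose vertices are these monomials, with $\mu \sim \mu'$ whenever $|\mu \cap \mu'| = n-1$. The boundary $d(\mu)$ is a signed sum of the $n$ distinct $(n-1)$-faces of $\mu$ obtained by deleting one factor; because $\mu$ is faithful its $n$ factors are linearly independent, so these $n$ faces are pairwise distinct basis elements of $\Lambda^{n-1}(J_n^*)$. Two distinct monomials $\mu, \mu'$ can share at most one face, and do so exactly when they are adjacent in $G$. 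The condition $d(h^*) = 0$ forces every face of every $\mu$ to be cancelled by contributions from other monomials, so each vertex of $G$ has degree $\geq n$, and hence $G$ has at least $n+1$ vertices.

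Sharpness of the bound is witnessed by $\C P^n$ under its standard $T^n$-action by coordinatewise rotation: this is a stably complex torus manifold with exactly $n+1$ isolated fixed points and represents a non-bounding class in $\Omega_{2n}^U$ (its top Chern number is $n+1 \neq 0$), hence also in $\mathcal{Z}_{2n}^{U:T^n}$. The main subtlety in the argument is the verification that the $n$ faces of a single monomial yield $n$ distinct neighbors in $G$; this follows from the faithfulness of $h$ together with a careful accounting of when two $n$-monomials in $\Lambda^n(J_n^*)$ can share an $(n-1)$-face. With this in hand, the degree count is elementary and the corollary is immediate.
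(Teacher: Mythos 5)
Your proof follows the same reduction as the paper: a non-bounding class in $\mathcal{Z}_{2n}^{U:T^n}$ maps under $\tilde{g}$ to a non-zero $h\in K_n$, and the number of fixed points is bounded below by the number of monomials surviving cancellation, which in turn is at least $n+1$. The paper states this last combinatorial fact as an unproved observation immediately preceding the corollary, whereas you actually supply the argument: after passing to $h^*$ (which preserves the monomial count), you form the graph on the surviving monomials with adjacency given by sharing an $(n-1)$-element subset, show that the $n$ faces of each faithful monomial are pairwise distinct, note that two distinct monomials share at most one face so each face needing cancellation contributes a distinct neighbour, conclude every vertex has degree at least $n$, and hence there are at least $n+1$ vertices; this reasoning is correct. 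Your appeal to $\C P^n$ with its standard action is also a sensible addition, since the paper's proof only establishes the lower bound and leaves the word ``minimum'' unsupported.
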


\begin{proof}
A non-bounding stably complex torus manifold $M^{2n}$ gives a non-zero element in $\mathcal{Z}_{2n}^{U:T^n}$ whose image under $\tilde{g}$ is a non-zero exterior polynomial $h\in K_n$. Since the minimum number of monomials $h$ can have is $n+1$ we have that $M$ has at least $n+1$ fixed points.
\end{proof}

\section{Omnioriented Quasitoric Manifolds}\label{OQTM}

Davis and Januszkiewicz~\cite{DJ91} defined what are now known as quasitoric manifolds in an attempt to find a topological analogue to toric varieties. They did not insist these be smooth manifolds but it has been shown, see~\cite{BP02}, that they do always admit a smooth structure. We therefore use the following as our working definition:

\begin{definition}
A \emph{quasitoric manifold} is an even-dimensional smooth closed manifold $M^{2n}$ with a locally standard smooth $T^n$-action such that the orbit space is a simple polytope $P$.
\end{definition}

Each quasitoric manifold determines a \emph{characteristic map} $\lambda$ on $P$, which sends the facets $F_j$ of $P$ onto non-trivial elements of $\Hom(S^1,T^n)$, unique up to sign, such that the $n$ facets of $P$ meeting at a vertex are mapped to a basis of $\Hom(S^1,T^n)\cong \Z^n$. We think of the pair $(P,\lambda)$ as the \emph{combinatorial data} associated to the quasitoric manifold $M$. Davis and Januszkiewicz~\cite{DJ91} showed that given a pair $(P,\lambda)$ of a simple polytope $P$ and a characteristic map $\lambda$, satisfying the above condition, we can construct a quasitoric manifold $\mathcal{M}(P,\lambda)$ with $(P,\lambda)$ as its combinatorial data.

We now give a brief exposition on the relationship between quasitoric manifolds and their combinatorial data. A more in-depth discussion can be found in~\cite{BPR07}.

\subsection{Quasitoric Pairs}

An $n$-dimensional convex polytope $P$ is the bounded intersection of $m$ irredundant half-spaces in $\R^n$. The bounding hyperplanes $H_1,\dots,H_m$ intersect $P$ in its facets (codimension-one faces) $F_1,\dots,F_m$ and $P$ is called \emph{simple} if each vertex is the intersection of exactly $n$ facets. Two polytopes are \emph{combinatorially equivalent} if their face posets are isomorphic and we call the corresponding equivalence classes \emph{combinatorial polytopes}.

Associated to each combinatorial simple $n$-polytope $P$, with $m$ facets, is an $(n+m)$-dimensional smooth equivariantly framed $T^m$-manifold $Z_P$ called the \emph{moment-angle manifold} associated to $P$. It can be defined as the pullback
\begin{equation}\label{E:pbk}
\begin{aligned}
\xymatrix{
Z_P \ar[r]^-{i_Z} \ar[d] &\C^m \ar[d]^-{\rho}\\
P \ar[r]^-{i_P} &\R^m_{\geq}
}
\end{aligned}
\end{equation}
where $i_P$ is the canonical affine embedding which maps a point of $P$ to its $m$-vector of distances from the hyperplanes $H_1,\dots,H_m$ and
\[
\rho(z_1,\dots ,z_m)\letbe (|z_1|^2,\dots ,|z_m|^2).
\]
The vertical maps are projections onto the $T^m$-orbit spaces and $i_Z$ is an equivariant embedding. Since $Z_P$ can be expressed as the complete intersection of $m-n$ real quadratic hypersurfaces in $\C^m$, see~\cite{BPR07}, it is smooth and equivariantly framed so we have a $T^m$-equivariant isomorphism
\begin{equation}\label{e:tauZ}
\tau(Z_P)\oplus \nu(i_Z)\cong Z_P\times \C^m.
\end{equation}

\begin{definition}\label{D:qpair}
A \emph{quasitoric pair} $(P,\Lambda)$ consists of a combinatorial oriented simple $n$-polytope $P$ and an integral $(n\times m)$-matrix $\Lambda$ whose columns $\lambda_i$ satisfy
\begin{align*}\tag{$\star$}
\det (\lambda_{i_1}\cdots \lambda_{i_n})=\pm 1,\quad \text{whenever $F_{i_1}\cap\dots\cap F_{i_n}$ is a vertex of $P$.}
\end{align*}
We define an equivalence class on quasitoric pairs whereby $(P_1,\Lambda_1)$ is equivalent to $(P_2,\Lambda_2)$ if and only if $P_1=P_2$ and there exists an $(m\times m)$-permutation matrix $\Sigma$ such that $\Lambda_1=\Lambda_2 \Sigma$; the matrix $\Sigma$ may be thought of as allowing for a reordering of the facets of $P$.
\end{definition}

From a quasitoric pair $(P^n,\Lambda)$ we can construct a quasitoric manifold $M^{2n}=\mathcal{M}(P,\Lambda)$ in the following manner: consider $\Lambda$ as an epimorphism $\Lambda\colon T^m\to T^n$, whose kernel we write as $K(\Lambda)$, which is isomorphic to $T^{m-n}$ by Condition~($\star$), and acts smoothly and freely on $Z_P$. Therefore, the orbit space
\[
M^{2n}=\mathcal{M}(P,\Lambda)\letbe Z_P/K(\Lambda)
\]
is a $2n$-dimensional smooth manifold with a smooth action of the quotient $n$-torus $T^{n}\cong T^m/K(\Lambda)$ whose orbit space is $P$. Davis and Januszkiewicz~\cite{DJ91} show that the action is locally standard. We also have a canonical orientation of $M$ induced by the orientation of $P$.

By factoring out the decomposition~\eqref{e:tauZ} the quasitoric manifold $M=\mathcal{M}(P,\Lambda)$ also has a canonical tangentially stably complex $T^n$-structure
\[
\tau(M)\oplus \R^{2(m-n)}\cong \rho_1\oplus\dots\oplus \rho_m,
\]
where $\rho_i$ is the complex line bundle defined by the projection
\[
Z_P\times_{K(\Lambda)} \C_i\longrightarrow M
\]
associated to the action of $K(\Lambda)$ on the $i^{\text{th}}$ coordinate subspace.

The characteristic submanifolds $M_i=\pi^{-1}(F_i)\subset M$, $1\leq i\leq m$, have orientable normal bundles $\nu_i$ that are isomorphic to $\rho_i|_{M_i}$ and $\rho_i$ is trivial over $M\smallsetminus M_i$. Therefore, the orientations underlying the complex structures on the bundles $\rho_i$ induce orientations on the characteristic submanifolds $M_i$.

\begin{definition}
An \emph{omniorientation} of a quasitoric manifold $M$ is a choice of orientations for $M$ and the characteristic submanifolds $M_1,\dots,M_m$.
\end{definition}

Thus, from a quasitoric pair $(P,\Lambda)$ we can construct an omnioriented quasitoric manifold that has a canonical tangentially stably complex $T^n$-structure.

Now consider an omnioriented quasitoric manifold $M^{2n}$. By definition there is a smooth projection $\pi\colon M\to P$ onto a simple $n$-polytope $P=P(M)$ which is oriented by the underlying orientation of $M$. The orientation of the facial submanifolds $M_i=\pi^{-1}(F_i)$, for the facets $F_1,\dots,F_m$ of $P$, have their normal bundles oriented by the omniorientation of $M$. The isotropy subcircles $T_i\leq T^n$, $1\leq i\leq m$, are oriented and therefore specify $m$ vectors in $\Z^n$, which form the columns of an $(n\times m)$-matrix $\Lambda(M)$ that satisfies Condition~$(\star)$. So the pair $(P(M),\Lambda(M))$ is a quasitoric pair. The omnioriented quasitoric manifold $\mathcal{M}(P(M),\Lambda(M))$ is equivariantly diffeomorphic to $M$ and the quasitoric pair associated to $\mathcal{M}(P,\Lambda)$ is $(P,\Lambda)$ itself unless the facets have been labelled in a different order in which case we can apply an $(m\times m)$-permutation matrix $\Sigma$ to $\Lambda$ and recall that $(P,\Lambda)$ is equivalent to $(P,\Lambda\Sigma)$.

We therefore obtain

\begin{corollary}
There is a bijection between the set of quasitoric pairs and the set of omnioriented quasitoric manifolds.
\end{corollary}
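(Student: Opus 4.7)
The plan is to construct explicit maps in both directions and verify they are mutually inverse, with the equivalence relation on quasitoric pairs precisely absorbing the ambiguity of facet labelling.

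First, I would set up the forward map $\Phi\colon (P,\Lambda)\mapsto \mathcal{M}(P,\Lambda)$, using the construction already reviewed in the text: form the moment-angle manifold $Z_P$ from the pullback square, interpret $\Lambda$ as an epimorphism $T^m\to T^n$ whose kernel $K(\Lambda)\cong T^{m-n}$ acts freely on $Z_P$ (free action guaranteed by Condition~$(\star)$), and take the quotient. The splitting $\tau(Z_P)\oplus\nu(i_Z)\cong Z_P\times\C^m$ descends through the quotient to provide the canonical tangentially stably complex $T^n$-structure $\tau(M)\oplus\R^{2(m-n)}\cong\rho_1\oplus\cdots\oplus\rho_m$, and the oriented line bundles $\rho_i$ together with the orientation of $P$ give the omniorientation of $M$.

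Next I would set up the reverse map $\Psi\colon M\mapsto (P(M),\Lambda(M))$: take $P(M)=M/T^n$, which is a simple $n$-polytope oriented by the orientation of $M$, read off the $m$ characteristic submanifolds $M_i=\pi^{-1}(F_i)$, and let the $i^{\text{th}}$ column $\lambda_i$ of $\Lambda(M)$ be the primitive vector in $\Hom(S^1,T^n)\cong\Z^n$ determined by the isotropy circle $T_i$, oriented by the omniorientation (i.e.\ so that the chosen orientations of $\nu_i$ and of $M$ at a vertex of $P$ fit together coherently with the orientation of $T^n$). Condition~$(\star)$ holds because the $T^n$-action is locally standard.

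The verification then splits into two parts. For $\Psi\circ\Phi$: given $(P,\Lambda)$, the orbit space of $\mathcal{M}(P,\Lambda)$ is $P$, the characteristic submanifolds are $\pi^{-1}(F_i)$, and the isotropy circle over $F_i$ is exactly the image in $T^n=T^m/K(\Lambda)$ of the $i^{\text{th}}$ coordinate circle of $T^m$, oriented by the complex structure on $\rho_i$; this recovers the column $\lambda_i$ of $\Lambda$, possibly after reordering facets, which is exactly the equivalence relation in Definition~\ref{D:qpair}. For $\Phi\circ\Psi$: given an omnioriented $M$, the text already asserts that $\mathcal{M}(P(M),\Lambda(M))$ is equivariantly diffeomorphic to $M$; one builds this diffeomorphism by using the fact that $M$ is recovered as a quotient of a canonical moment-angle manifold $Z_{P(M)}$ by the kernel of the characteristic epimorphism $\Lambda(M)\colon T^m\to T^n$, a statement originally due to Davis--Januszkiewicz, and one then checks that this diffeomorphism matches orientations of $M$ and of each characteristic submanifold $M_i$, by construction of $\Lambda(M)$.

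The main obstacle is the bookkeeping of orientations: making sure that the omniorientation data (signs of the $\lambda_i$ and the orientation of $P$) produced by $\Psi$ exactly reproduces the original omniorientation after applying $\Phi$, and conversely. Everything else is formal once we agree that two quasitoric pairs differing only by a permutation of the labels of facets are identified.
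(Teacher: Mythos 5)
Your proposal follows the same route as the paper: set up the forward map $(P,\Lambda)\mapsto\mathcal{M}(P,\Lambda)$ via the moment-angle manifold and its canonical stably complex structure, set up the reverse map $M\mapsto(P(M),\Lambda(M))$ by reading off the orbit polytope and the oriented isotropy circles, and observe that the two compositions recover the original data up to the facet-reordering equivalence on quasitoric pairs, with the Davis--Januszkiewicz reconstruction theorem supplying the equivariant diffeomorphism $\mathcal{M}(P(M),\Lambda(M))\cong M$. This matches the discussion preceding the corollary in the text, so no further comparison is needed.
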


\subsection{Ring of Quasitoric Pairs}

Suppose we are given two quasitoric pairs $(P_1^{n_1},\lambda_1)$ and $(P_2^{n_2},\lambda_2)$, where
\[
\mathcal{F}(P_1)=\{ F_1,\dots,F_{m_1}\}\quad \text{and}\quad \mathcal{F}(P_2)=\{ F_1',\dots,F_{m_2}'\}.
\]
We define a product
\[
(P_1,\lambda_1)\times (P_2,\lambda_2)\letbe (P_1\times P_2,\lambda_1\times \lambda_2),
\]
where the characteristic map is defined as
\[
(\lambda_1\times \lambda_2)(F_i\times P_2)=(\lambda_1(F_i),0,\dots,0)\quad \text{and}\quad (\lambda_1\times \lambda_2)(P_1\times F_i')=(0,\dots,0,\lambda_2(F_i')).
\]

\begin{definition}
We denote the free abelian group generated by all quasitoric pairs by $\mathcal{Q}_*$, where we may interpret $+$ as disjoint union and grade $\mathcal{Q}_*$ by the dimension of the polytope. The multiplication depends on the ordering of $P_1\times P_2$ so $\mathcal{Q}_*$ forms a graded non-commutative ring.
\end{definition}

We have a homomorphism of non-commutative graded rings
\begin{equation}\label{eq:M}
\mathcal{M}\colon \mathcal{Q}_*\longrightarrow \Xi_*,
\end{equation}
by constructing the omnioriented quasitoric manifold associated to a quasitoric pair. This is clearly not injective as can be shown by considering any bounding pair. For example, consider the quasitoric pair $(\Delta^1,\lambda)$, where $\mathcal{F}(\Delta^1)=\{ D_1,D_2\}$, such that $\lambda(D_1)=\lambda(D_2)$. The sign of the two vertices of $\Delta^1$, which are also the facets, are then distinct and the torus polynomial of the oriented torus graph related to $(\Delta^1,\lambda)$ is zero. Therefore, $\mathcal{M}(\Delta^1,\lambda)=0\in \Xi_1$.

Suppose $(P,\lambda)\in \mathcal{Q}_*$. For each vertex $v\in P$ define the ordered set
\begin{equation}\label{E:F(P)}
\mathcal{F}(P)_v=\{ F_{v_1},\dots,F_{v_n}\}
\end{equation}
to be the facets in $\mathcal{F}(P)$ such that $v=F_{v_1}\cap\dots\cap F_{v_n}$ and that the inward pointing normals of the $F_{v_i}$ form a positive basis in the standard orientation of $\R^n$. In~\cite[Remark 5.17]{BPR10} it is shown that
\begin{equation}\label{E:Psigma}
\det [\lambda(F_{v_1})\cdots \lambda(F_{v_n})]=\sigma(v),
\end{equation}
where $\sigma(v)$ is the sign of $v$ as defined in Definition~\ref{D:sign}.

\begin{definition}
The \emph{quasitoric polynomial} of the quasitoric pair $(P,\lambda)$ is the faithful exterior polynomial
\[
\mathfrak{g}{(P,\lambda)}=\sum_{v\in P}\lambda(\mathcal{F}(P)_v)\in \Lambda^n(J_n^*),
\]
where $\lambda(\mathcal{F}(P)_v)\letbe \lambda(F_{v_1})\wedge\dots\wedge \lambda(F_{v_n})$.
\end{definition}

The torus graph $(\Gamma_M,\alpha_M)$ of a quasitoric manifold $M=\mathcal{M}(P,\lambda)$ is given by the 1-skeleton of $P$ with axial function $\alpha_M$ given by
\[
[\alpha_M(e_{v_1})\cdots \alpha_M(e_{v_n})]=[\lambda(F_{v_1})\cdots \lambda(F_{v_n})]^*,
\]
using the notation of~\eqref{matrixdual}, where $e_{v_i}$ is the unique edge of $P$ such that $F_{v_i}\cap e_{v_i}=v$.
We can therefore deduce that the torus polynomial $g{(\Gamma_M,\alpha_M)}$ of the oriented torus graph coming from $M=\mathcal{M}(P,\lambda)$ is dual to the quasitoric polynomial of $(P,\lambda)$, that is
\begin{equation}\label{E:dual}
g{(\Gamma_M,\alpha_M)}=(\mathfrak{g}{(P,\lambda)})^*.
\end{equation}

\begin{corollary}
For every quasitoric pair $(P,\lambda)$, we have
\[
d(\mathfrak{g}{(P,\lambda)})=0.
\]
\end{corollary}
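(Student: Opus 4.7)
The plan is to deduce this directly from Theorem~\ref{T:tgraph} together with the duality identity~\eqref{E:dual}, by routing through the omnioriented quasitoric manifold $M=\mathcal{M}(P,\lambda)$.

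First I would invoke the construction of $M=\mathcal{M}(P,\lambda)$ from a quasitoric pair: since $M$ inherits a canonical tangentially stably complex $T^n$-structure together with an effective $T^n$-action whose fixed point set is non-empty (the image of the vertices of $P$), it is a stably complex torus manifold. It therefore has a well-defined oriented torus graph $(\Gamma_M,\alpha_M)$, and its torus polynomial $g(\Gamma_M,\alpha_M)\in \Lambda^n(J_n)$ is faithful by construction.

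Next I would apply the ($\Rightarrow$) direction of Theorem~\ref{T:tgraph} to the faithful exterior polynomial $g(\Gamma_M,\alpha_M)$; this yields
\[
d\bigl((g(\Gamma_M,\alpha_M))^*\bigr)=0 \quad \text{in } \Lambda^{n-1}(J_n^*).
\]
By~\eqref{E:dual} we know $g(\Gamma_M,\alpha_M)=(\mathfrak{g}(P,\lambda))^*$, so it remains to observe that the dualisation operation~\eqref{matrixdual} is an involution on faithful exterior polynomials: on matrices $A^{**}=((A^{-1})^T)^*=(((A^{-1})^T)^{-1})^T=A$, and this passes to monomials and then linearly to faithful polynomials. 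Consequently $(g(\Gamma_M,\alpha_M))^*=\mathfrak{g}(P,\lambda)$, and the displayed identity above becomes exactly $d(\mathfrak{g}(P,\lambda))=0$.

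There is essentially no obstacle here beyond bookkeeping: the one subtlety worth checking explicitly is that dualisation is an involution (and in particular that $\mathfrak{g}(P,\lambda)$ and $g(\Gamma_M,\alpha_M)$ really are faithful in their respective exterior algebras, so that duals are defined on the nose). Once those are in place, the corollary is immediate from Theorem~\ref{T:tgraph} applied to $M=\mathcal{M}(P,\lambda)$ and~\eqref{E:dual}.
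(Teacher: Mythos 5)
Your argument is correct and is essentially the paper's own proof: the paper deduces the corollary directly from Theorem~\ref{T:tgraph} and~\eqref{E:dual}, leaving the involutivity of dualisation implicit where you spell it out. Nothing to add.
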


\begin{proof}
This follows directly from Theorem~\ref{T:tgraph} and~\eqref{E:dual}.
\end{proof}

\begin{definition}
Let $\mathfrak{K}_n$ denote the abelian group of all faithful exterior polynomials $h\in \Lambda^n(J_n^*)$ such that $d(h)=0$.
\end{definition}

\begin{remark}\label{R:mfg}
As for~\eqref{e:him}, we may regard $\mathfrak{g}$ as a homomorphism
\begin{equation}\label{eq:mfg}
\mathfrak{g}\colon \mathcal{Q}_n\longrightarrow \mathfrak{K}_n
\end{equation}
by taking the quasitoric polynomial of a quasitoric pair.
\end{remark}

\begin{definition}
Let $(P,\lambda)\in\mathcal{Q}_{k}$, for some $1\leq k<n$. An \emph{$n$-quasitoric polynomial $\iota(\mathfrak{g}{(P,\lambda)})$} is the image of $\mathfrak{g}{(P,\lambda)}$ under a monomorphism
\[
\iota\colon\text{Hom}(S^1,T^k)\longrightarrow \text{Hom}(S^1,T^n).
\]
\end{definition}

Given two quasitoric pairs $(P^{n_1},\lambda_1)$ and $(P^{n_2},\lambda_2)$ using the two monomorphisms
\begin{align*}
\Z^{n_1}\cong\Hom(S^1,T^{n_1})&\xrightarrow[\phantom{\qquad}]{\iota_1} \Hom(S^1,T^{n_1+n_2})\cong \Z^{n_1+n_2}\\
\Z^{n_2}\cong\Hom(S^1,T^{n_2})&\xrightarrow[\phantom{\qquad}]{\iota_2} \Hom(S^1,T^{n_1+n_2})\cong \Z^{n_1+n_2}
\end{align*}
thought of as inclusion of the first $n_1$ factors and last $n_2$ factors respectively, we obtain two $(n_1+n_2)$-quasitoric polynomials $\iota_1(\mathfrak{g}{(P_1^{n_1},\lambda_1)})$ and $\iota_2(\mathfrak{g}{(P_2^{n_2},\lambda_2)})$. It is easy to see that:

\begin{lemma}
We have a product formula:
\[
\mathfrak{g}{(P_1^{n_1}\times P_2^{n_2},\lambda_1\times\lambda_2)}=\iota_1(\mathfrak{g}{(P_1^{n_1},\lambda_1)})\iota_2(\mathfrak{g}{(P_2^{n_2},\lambda_2)}).
\]
\end{lemma}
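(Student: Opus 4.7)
The plan is to unfold both sides using the definition of the quasitoric polynomial and match the result vertex by vertex. First I would note that a vertex of $P_1 \times P_2$ is exactly a pair $(v_1, v_2)$ where $v_i$ is a vertex of $P_i$, and that the facets of $P_1 \times P_2$ containing $(v_1,v_2)$ are precisely the $n_1+n_2$ facets $F_{v_{1,i}} \times P_2$ (for $F_{v_{1,i}} \in \mathcal{F}(P_1)_{v_1}$) together with $P_1 \times F'_{v_{2,j}}$ (for $F'_{v_{2,j}} \in \mathcal{F}(P_2)_{v_2}$). From the definition of $\lambda_1 \times \lambda_2$ these are sent respectively to $\iota_1(\lambda_1(F_{v_{1,i}}))$ and $\iota_2(\lambda_2(F'_{v_{2,j}}))$ in $\operatorname{Hom}(S^1, T^{n_1+n_2})$.

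Next I would verify the ordering convention. The ordered set $\mathcal{F}(P)_v$ of~\eqref{E:F(P)} is chosen so that the inward-pointing normals form a positive basis of $\mathbb{R}^n$ with the standard orientation. At $(v_1,v_2) \in P_1 \times P_2$ the inward normals split as $(n_{v_{1,i}}, 0)$ and $(0, n'_{v_{2,j}})$, where the $n_{v_{1,i}}$ form a positive basis of $\mathbb{R}^{n_1}$ and the $n'_{v_{2,j}}$ a positive basis of $\mathbb{R}^{n_2}$; the concatenation, in this order, is therefore a positive basis of $\mathbb{R}^{n_1} \oplus \mathbb{R}^{n_2}$. Hence the ordered set $\mathcal{F}(P_1 \times P_2)_{(v_1,v_2)}$ can be chosen as the concatenation of $\mathcal{F}(P_1)_{v_1}$ and $\mathcal{F}(P_2)_{v_2}$ (with no sign correction). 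Writing this out gives
\[
(\lambda_1 \times \lambda_2)(\mathcal{F}(P_1 \times P_2)_{(v_1,v_2)}) = \iota_1(\lambda_1(\mathcal{F}(P_1)_{v_1})) \wedge \iota_2(\lambda_2(\mathcal{F}(P_2)_{v_2})).
\]

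Finally, I would sum over all vertices of $P_1 \times P_2$ and use the bilinearity of the exterior product to factorise the double sum:
\[
\mathfrak{g}(P_1\times P_2,\lambda_1\times\lambda_2) = \sum_{v_1,v_2} \iota_1(\lambda_1(\mathcal{F}(P_1)_{v_1})) \wedge \iota_2(\lambda_2(\mathcal{F}(P_2)_{v_2})) = \iota_1(\mathfrak{g}(P_1,\lambda_1))\,\iota_2(\mathfrak{g}(P_2,\lambda_2)),
\]
which is the desired identity.

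The main thing to be careful about is the orientation/ordering step: a careless choice of ordering of the facets at $(v_1,v_2)$ could introduce a sign $(-1)^{n_1 n_2}$ (the usual sign for interleaving wedge factors). The observation that the standard orientation on $\mathbb{R}^{n_1}\oplus\mathbb{R}^{n_2}$ is compatible with concatenating positive bases of the two summands in that order — combined with~\eqref{E:Psigma} which pins $\sigma(v)$ down via this very determinant — is what removes the sign ambiguity and makes the product formula hold as stated.
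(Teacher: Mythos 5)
Your proof is correct, and it follows the obvious route: identify vertices and facets of the product, verify that the ordering convention given by inward normals is compatible with concatenation, and factorise the double sum. The paper actually omits a proof here (it merely says ``It is easy to see that''), so there is no alternative argument in the paper to compare against; your write-up supplies exactly the routine verification that is being left implicit. The one step that genuinely needed care---checking that concatenating a positive basis of $\R^{n_1}$ followed by a positive basis of $\R^{n_2}$ yields a positive basis of $\R^{n_1+n_2}$, so that no Koszul sign $(-1)^{n_1 n_2}$ appears---is precisely the step you singled out, and your determinant argument for the block matrix settles it. The appeal to~\eqref{E:Psigma} to pin down the sign convention is a nice sanity check, though strictly speaking the argument already follows from~\eqref{E:F(P)} and the definition of $\lambda_1\times\lambda_2$ alone.
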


We can now use several results to construct a commutative diagram of non-commutative graded rings. By taking
\[
K_*\letbe \bigoplus_{n\geq 0} K_n\quad \cong \quad \mathfrak{K}_*\letbe \bigoplus_{n\geq 0} \mathfrak{K}_n
\]
we obtain isomorphic non-commutative graded rings. We obtain the following commutative diagram of non-commutative graded rings
\begin{equation}\label{E:commdiag}
\begin{aligned}
\xymatrix{
\mathcal{Q}_* \ar[r]^-{\mathcal{M}} \ar[d]_-{\mathfrak{g}} &\Xi_* \ar[d]^-{\tilde{g}}\\
\mathfrak{K}_* \ar@{<->}[r]^-{\cong} &K_*
}
\end{aligned}
\end{equation}
using~\eqref{eq:M},~\eqref{e:him} and Remark~\ref{R:mfg}. Since, by Theorem~\ref{T:g}, $\tilde{g}$ is an isomorphism we have that $\Xi_*\cong \mathfrak{K}_*$ and the following commutative diagram of non-commutative rings:
\[ \xymatrix{
&\Xi_* \ar[dd]^-{\cong}\\
\mathcal{Q}_* \ar[ur]^-{\mathcal{M}} \ar[dr]_-{\mathfrak{g}}&\\
&\mathfrak{K}_*
} \]

The study the homomorphism $\mathcal{M}$ is therefore equivalent to the study of $\mathfrak{g}$. We know that $\mathcal{M}$ and $\mathfrak{g}$ are not monomorphisms and are interested whether or not these homomorphisms are epimorphisms and will use this section to investigate this matter. 

Given a non-zero polynomial $h\in \Lambda^k(J_n^*)$, for $k>0$, we write $h$ uniquely as a finite sum of monomials
\begin{equation}\label{h}
h=h_1+\dots+h_r.
\end{equation}
If $d(h)=0$, then it may be possible to split the monomials up so we can form two non-zero polynomials $f_1=\sum_{i\in A}h_i$ and $f_2=\sum_{i\in B}h_i$ such that $A\cup B=\{1,\dots,r\}$, $A\cap B=\emptyset$ and $d(f_1)=0=d(f_2)$.

\begin{definition}
Given a non-zero polynomial $h\in \Lambda^k(J_n^*)$, for $k>0$, such that \text{$d(h)=0$}, then $h$ is \emph{disconnected} if we can split its monomials as $h=f_1+f_2$  such that
\[
d(f_1)=0=d(f_2),
\]
where $f_1,f_2\ne 0$. If this is not possible, then $h$ is \emph{connected}.
\end{definition}

\begin{example}
For indeterminates $s_i\in J_n^*$, we give the following examples:
\begin{enumerate}
\item $h=s_1\wedge s_2+s_2\wedge s_3+s_3\wedge s_1$ is connected.
\item $h=s_1-s_2+s_3-s_4$ is disconnected.
\end{enumerate}
\end{example}

We will now consider the surjectivity of $\mathcal{M}$ for low dimensional cases.

\subsubsection*{The 1-dimensional Case}

Any connected polynomial $h\in \mathfrak{K}_1$ consists of the two elements $t,\bar{t}\in J_1^*$ that form a basis of $\Hom(S^1,T^n)$  and is of the form $\pm(t-\bar{t})$. This can easily be seen to be the quasitoric polynomial of $(\Delta^1,\lambda_+)$ or $(\Delta^1,\lambda_-)$, where
\[
\lambda_+(D_1)=\lambda_-(D_2)=t\quad \text{and}\quad \lambda_+(D_2)=\lambda_-(D_1)=\bar{t}.
\]
Therefore every $h\in\mathfrak{K}_1$ is of the form $n(t-\bar{t})$, for some $n\in\Z$, and is the image of $n$ disjoint copies of $(\Delta^1,\lambda_+)$, if $n$ is positive, or $n$ disjoint copies of $(\Delta^1,\lambda_-)$, if $n$ is negative. We therefore have:

\begin{corollary}
The homomorphism of abelian groups
\[
\mathcal{M}\colon \mathcal{Q}_1\longrightarrow \Xi_1
\]
is surjective.
\end{corollary}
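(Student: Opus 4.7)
The plan is to use the commutative diagram~\eqref{E:commdiag} together with Theorem~\ref{T:g}: since $\tilde g$ is an isomorphism and $K_* \cong \mathfrak{K}_*$ under the duality $(\cdot)^*$, surjectivity of $\mathcal M\colon \mathcal Q_1 \to \Xi_1$ is equivalent to surjectivity of $\mathfrak g\colon \mathcal Q_1 \to \mathfrak K_1$. So I would reduce the problem to showing that every faithful $h \in \Lambda^1(J_1^*)$ with $d(h) = 0$ is the quasitoric polynomial of some quasitoric pair (or a disjoint union of such).

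Next I would classify $\mathfrak K_1$ explicitly. A degree-one monomial in $\Lambda^1(J_1^*)$ is a single element of $J_1^* = \Z \smallsetminus \{0\}$, and the faithfulness condition requires it to form a basis of $\Hom(S^1, T^1) \cong \Z$; the only such elements are $\pm 1$. Writing $t$ for $1$ and $\bar t$ for $-1$, every faithful $h \in \Lambda^1(J_1^*)$ has the form $a t + b \bar t$ with $a, b \in \Z$, and since $d_1$ sends each degree-one generator to $1 \in \Lambda^0(J_1^*)$, the vanishing condition $d(h) = 0$ forces $b = -a$. Thus $\mathfrak K_1 = \Z\,(t - \bar t)$.

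It then remains to verify that the pairs $(\Delta^1, \lambda_+)$ and $(\Delta^1, \lambda_-)$ displayed in the statement realize the two generators of $\mathfrak K_1$. Applying~\eqref{E:Psigma} at each of the two vertices of $\Delta^1$ (which coincide with its facets $D_1, D_2$), together with the $n=1$ sign convention analogous to that used for $\mu_p$ in the remark following Definition~\ref{D:sign}, the contributions of the two vertices should carry opposite signs, yielding $\mathfrak g(\Delta^1, \lambda_+) = t - \bar t$ and $\mathfrak g(\Delta^1, \lambda_-) = \bar t - t$. Since addition in $\mathcal Q_1$ is disjoint union, every element $n(t - \bar t) \in \mathfrak K_1$ is then the image of $|n|$ disjoint copies of $(\Delta^1,\lambda_+)$ if $n > 0$ and of $(\Delta^1, \lambda_-)$ if $n < 0$ (with the empty pair handling $n = 0$), so $\mathfrak g$ — and hence $\mathcal M$ — is surjective.

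The only subtle point is the sign verification in the last step: the inward normal at the right-hand endpoint of an oriented $\Delta^1$ points in the negative direction, so the convention ordering $\mathcal F(P)_v$ so that inward normals form a positive basis, combined with the $n=1$ sign convention, must be applied carefully to produce opposite values of $\sigma(v)$ at the two vertices of $\Delta^1$. This is a finite, care-demanding calculation rather than a structural obstacle, and once carried out it completes the argument.
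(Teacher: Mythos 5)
Your proposal is correct and mirrors the paper's own argument: reduce surjectivity of $\mathcal{M}$ to surjectivity of $\mathfrak{g}$ via the commutative diagram, observe that faithfulness and $d(h)=0$ force $\mathfrak{K}_1 = \Z(t-\bar t)$ with $t=1,\bar t=-1$, and realize the generators by the two labellings $\lambda_\pm$ of $\Delta^1$ together with disjoint unions for arbitrary multiples. Your added remark about carefully checking the $n=1$ sign convention at the two vertices is a reasonable caution but does not change the structure of the argument.
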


In~\cite[\S 6]{BR01} it was shown how to take the connected sum of two simple $n$-polytopes $P$ and $Q$ at distinguished vertices, $v$ and $w$ respectively, to obtain another simple $n$\nobreakdash-\hspace{0pt}polytope $P\#_{v,w} Q$. Informally, we ``cut off'' $v$ from $P$ and $w$ from $Q$ and then, after a projective transformation, ``glue'' the rest of $P$ to the rest of $Q$ along the two new simplex facets to obtain $P\#_{v,w} Q$. If we have two quasitoric pairs $(P,\lambda)$ and $(Q,\eta)$, with vertices $v\in P$ and $w\in Q$ such that
\[
\lambda(\mathcal{F}(P)_{v})+\eta(\mathcal{F}(Q)_{w})=0,
\]
which implies that $\sigma(v)\ne \sigma(w)$, then we can then form the quasitoric pair
\[
(P\#_{v,w}Q,\lambda\#\eta).
\]
This has the property that
\begin{equation}\label{E:csum}
\mathcal{M}(P\#_{v,w} Q,\lambda\#\eta)=\mathcal{M}(P,\lambda)+\mathcal{M}(Q,\eta),
\end{equation}
in $\Xi_n$, which is an equation of bordism classes. In this section we will exhibit this operation.

\begin{definition}
The \emph{vertex figure} of an $n$-polytope $P$ at a vertex $v$ is the \text{$(n-1)$}-polytope
\[
P/v\letbe P\cap H_v,
\]
where $H_v$ is a hyperplane that isolates the single vertex $v\in P$ and no other.
\end{definition}

As shown in~\cite[Proposition 2.4]{Z95} the combinatorial type of $P/v$ is independent of the choice of $H_v$.

\begin{proposition}[{\cite[Proposition 2.16]{Z95}}]
A polytope $P$ is a simple polytope if and only if $P/v$ is a simplex, for all vertices $v\in P$.
\end{proposition}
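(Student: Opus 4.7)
The plan is to reduce the statement to a pointwise claim: for each vertex $v$ of $P$, the vertex figure $P/v$ is a simplex if and only if $v$ is a simple vertex (meaning $v$ lies on exactly $n$ facets of $P$). Since ``$P$ is simple'' is by definition ``every vertex of $P$ is simple'' (and the vertex figure is taken at every vertex in the hypothesis), this pointwise equivalence gives the proposition.

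The main tool is the standard combinatorial correspondence between faces of $P/v$ and faces of $P$ containing $v$. Since $H_v$ separates $v$ from the rest of $P$, for every face $G$ of $P$ with $v\in G$ the intersection $G\cap H_v$ is a face of $P/v$ of dimension $\dim G - 1$, and this assignment is an isomorphism from the poset of faces of $P$ properly containing $v$ onto the poset of non-empty faces of $P/v$. In particular, vertices of $P/v$ correspond bijectively to edges of $P$ meeting $v$, and facets of $P/v$ correspond bijectively to facets of $P$ containing $v$. Note also that $P/v$ has dimension $n-1$, since $H_v$ is a hyperplane and $v$ is a vertex of $P$.

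Now I would use two basic polytope facts. First, any vertex $v$ of an $n$-polytope lies on at least $n$ facets and at least $n$ edges, and being a simple vertex is characterised by equality in either condition. Second, any $(n-1)$-polytope has at least $n$ vertices, with equality iff it is an $(n-1)$-simplex (because $n$ affinely independent points in affine $(n-1)$-space span a simplex, and any $(n-1)$-polytope on $n$ vertices is their convex hull). Combining with the correspondence above: $P/v$ has exactly $n$ vertices iff $v$ lies on exactly $n$ edges of $P$, iff $v$ is simple; and this is in turn equivalent to $P/v$ being an $(n-1)$-simplex.

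The only step requiring care is the bijection between face lattices; it is routine but one must check that intersecting with $H_v$ does not collapse distinct faces and does produce a face of $P/v$ of the predicted dimension. The rest is just assembling these dimension counts. I would present the proof as the one-vertex equivalence above, then conclude by quantifying over $v \in P$.
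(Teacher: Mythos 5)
The paper does not prove this proposition; it cites it directly from Ziegler's \emph{Lectures on Polytopes}, so there is no internal proof to compare against. Your argument is correct, and the structure is the standard one: reduce to the pointwise claim, invoke the order-isomorphism between the lattice of faces of $P$ properly containing $v$ and the face lattice of $P/v$ (with a dimension shift of one), and then compare face counts.

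One small remark on economy. You route through edges: $v$ simple $\Leftrightarrow$ exactly $n$ edges at $v$ $\Leftrightarrow$ $P/v$ has exactly $n$ vertices $\Leftrightarrow$ $P/v$ a simplex. The middle step requires the assertion that a simple vertex (defined by lying on exactly $n$ facets) equivalently lies on exactly $n$ edges; you state this as a ``basic polytope fact'' without proof, and indeed one of the cleanest proofs of that fact is precisely via the vertex figure, so there is a faint whiff of circularity if one is not careful about what is taken as known. This is easily avoided: under the same face-lattice isomorphism, the facets of $P/v$ correspond to facets of $P$ through $v$, and the dual of your simplex characterisation (an $(n-1)$-polytope has at least $n$ facets, with equality iff it is a simplex) then gives the chain $v$ simple $\Leftrightarrow$ exactly $n$ facets through $v$ $\Leftrightarrow$ $P/v$ has exactly $n$ facets $\Leftrightarrow$ $P/v$ a simplex, with no need to mention edges at all. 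Either way the argument is sound; the facet route just matches the definition of ``simple'' more directly and avoids appealing to an unproven edge-count characterisation.
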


For a polytope $P$ we will write $m(P)$ to be the number of facets of $P$, and $v(P)$ to be the number of vertices of $P$. We assume the dimensions of our polytopes to be $\geq 2$ for now, and we will discuss the 1-dimensional case afterwards. We embed the standard $(n-1)$-simplex $\Delta^{n-1}$ in the subspace $\{ x\in \R^n\mid x_1=0\}$ and construct the \emph{polyhedral template} $\Gamma^n$ by taking the cartesian product with the first coordinate axis. So $\Gamma^n$ is the intersection of $n$ half spaces in $\R^n$ and we denote its facets by $G_i$ which have the form $\R\times D_i$, for $1\leq i\leq n$, where
\[
\mathcal{F}(\Delta^{n-1})=\{ D_1,\dots,D_n\}.
\]
Both $\Gamma^n$ and $G_i$ are divided into positive and negative halves determined by the sign of the coordinate $x_1$.

Suppose we have simple polytopes $P^n$ and $Q^n$ with distinguished vertices $v\in P$ and $w\in Q$. We write
\[
v=E_1\cap\dots\cap E_n\quad \text{and}\quad w=F_1\cap\dots\cap F_n,
\]
and
\[
\mathcal{C}_{v}=\mathcal{F}(P)\smallsetminus \{ E_1,\dots,E_n\}\quad \text{and}\quad \mathcal{C}_{w}=\mathcal{F}(Q)\smallsetminus \{ F_1,\dots,F_n\}.
\]
We now suppose that $P$ is embedded in $\R^n$ such that $v$ is at $(1,0,\dots,0)$ and that its vertex figure is mapped to $\Delta^{n-1}=\Gamma^n\cap \{ x_1=0\}$. Therefore, the hyperplanes defining the facets in $\mathcal{C}_{v}$ lie in the negative half of $\R^n$, that is the set $\{ x\mid x_1\leq 0\}$. We now apply the projective transformation
\[
\pi_1(x)=\frac{x}{1-x_1}
\]
to $P$. This sends $v$ to $+\infty$ and the hyperplane defining $E_i$ to the hyperplane defining $G_i$ (without loss of generality), for $1\leq i\leq n$. We similarly embed $Q$ such that $w$ is at $(-1,0,\dots,0)$ and the hyperplanes defining $\mathcal{C}_{w}$ lie in the positive half of $\R^n$. We then apply the projective transformation
\[
\pi_2(x)=\frac{x}{1+x_1}
\]
to $Q$, which sends $w$ to $-\infty$ and the hyperplane defining $F_i$ to the hyperplane defining $G_i$, for $1\leq i\leq n$.

\begin{definition}
The \emph{connected sum} $P\#_{v,w} Q$ of $P$ at $v$ and $Q$ at $w$ is the simple polytope determined by all the hyperplanes that define $\mathcal{C}_v$, $\mathcal{C}_w$ and $G_i$, for $1\leq i\leq n$.
\end{definition}

This is only defined up to combinatorial equivalence and obviously depends on the vertices that we choose and possibly the orderings for $E_i$ and $F_i$.

We now identify the facets $\pi_1(E_i)$ with $\pi_2(F_i)$ and call it $G_i$, for $1\leq i\leq n$. We can write
\[
\mathcal{F}(P\#_{v,w} Q)=\mathcal{C}_{v}\cup \{ G_i\mid 1\leq i\leq n\}\cup \mathcal{C}_{w}.
\]
We also have
\begin{equation}\label{eq:mq}
m(P\#_{v,w} Q)=m(P)+m(Q)-n\quad \text{and}\quad v(P\#_{v,w} Q)=v(P)+v(Q)-2,
\end{equation}
as $n$ facets of $P$ are identified with $n$ facets of $Q$ and we lose the two vertices $v$ and $w$. 

\begin{example}\label{e:simp}
The connected sum $\Delta^n\#_{v,w} Q^n$, where the symmetry of the $n$-simplex $\Delta^n$ ensures that the result is independent of the choice of $v$, is equivalent to adding an extra hyperplane $H_w$ into the defining set for $Q$ that isolates the vertex $w$ but no other vertex. This can be seen by observing that the projective transformation $\pi_1$ sends $\Delta^n$ to $\Gamma^n$, truncated in its negative half by a single hyperplane. So we may think of the connected sum at a vertex of a simple polytope $Q$ with a simplex as ``chopping off a vertex'' of $Q$.
\end{example}

Suppose we have two quasitoric pairs $(P,\lambda)$ and $(Q,\eta)$ with distinguished vertices $v\in P$ and $w\in Q$ such that
\begin{equation}\label{E:o}
\lambda(\mathcal{F}(P)_{v})+\eta(\mathcal{F}(Q)_{w})=0.
\end{equation}
This implies that, as unordered sets,
\begin{equation}\label{E:unordered}
\{ \lambda(E_1),\dots,\lambda(E_n)\}=\{ \eta(F_1),\dots,\eta(F_n)\},
\end{equation}
where $v=E_1\cap\dots\cap E_n$ and $w=F_1\cap\dots\cap F_n$, and that
\begin{equation}\label{E:ne}
\sigma(v)\ne \sigma(w).
\end{equation}
Once we have embedded $P$ in $\R^n$ as described above, that is, mapping $v$ to $(1,0,\dots,0)$ and its vertex figure $P/v$ to $\Delta^{n-1}=\Gamma^n\cap \{ x_1=0\}$, then we can induce a labelling on the facets of $\Delta^{n-1}$ by labelling $D_i\in \mathcal{F}(\Delta^{n-1})$ with $\lambda(E_i)$, for $1\leq i\leq n$. We now embed $Q$ in $\R^n$ mapping $w$ to $(-1,0,\dots,0)$ and $Q/w$ to $\Delta^{n-1}=\Gamma^n\cap \{ x_1=0\}$. By~\eqref{E:o}, we may choose a permutation of the facets $F_i$ such that the induced labelling on $\Delta^{n-1}$ by $Q/w$ agrees with the labelling induced by $P/v$; that is
\[
\lambda(E_i)=\eta(F_i),\quad \text{for}\ 1\leq i\leq n.
\]
This can be seen geometrically as the vertex $w$, with the labelling on the facets $F_1,\dots,F_n$ given by $\eta$, is locally the reflection in $\{ x_1=0\}$ of the vertex $v$, with the labelling on the facets $E_1,\dots,E_n$ given by the $\lambda$. It can easily be seen that this would not be possible if~\eqref{E:unordered} held but~\eqref{E:ne} did not as we would not have this local reflection.

We can now form the \emph{connected sum of quasitoric pairs}
\[
(P,\lambda)\#_{v,w}(Q,\eta)\letbe (P\#_{v,w} Q,\lambda\#\eta)
\]
where the characteristic function $\lambda\#\eta$ is defined as
\[
\lambda\#\eta(F)=
\begin{cases}
\lambda(F), &\text{for}\ F\in \mathcal{C}_{v};\\
\lambda(F)=\eta(F), &\text{for}\ F=G_i,\quad 1\leq i\leq n;\\
\eta(F), &\text{for}\ F\in \mathcal{C}_{w}.
\end{cases}
\]

\begin{lemma}\label{L:connsum}
We have the following formula for the quasitoric polynomial of a connected sum of quasitoric pairs:
\[
\mathfrak{g}{(P\#_{v,w}Q,\lambda\#\eta)}=\mathfrak{g}{(P,\lambda)}+\mathfrak{g}{(Q,\eta)}\in \Lambda^n(J_n^*).
\]
\end{lemma}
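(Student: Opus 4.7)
The plan is to track the vertex set and facet structure of $P\#_{v,w}Q$ carefully, identify the contributions of each vertex to the quasitoric polynomial, and use the hypothesis~\eqref{E:o} to cancel the contributions of the ``removed'' vertices $v$ and $w$.

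First I would use the combinatorial description of the connected sum to establish the key fact that
\[
\mathcal{V}(P\#_{v,w}Q)\;=\;(\mathcal{V}(P)\smallsetminus\{v\})\;\sqcup\;(\mathcal{V}(Q)\smallsetminus\{w\}),
\]
which is consistent with the vertex count $v(P\#_{v,w}Q)=v(P)+v(Q)-2$ recorded in~\eqref{eq:mq}. Indeed, the projective transformation $\pi_1$ sends $v$ to infinity while leaving every other vertex of $P$ intact, and similarly $\pi_2$ removes only $w$; any hyperplane from $\mathcal{C}_v$ and any from $\mathcal{C}_w$ is confined to opposite half-spaces of $\{x_1=0\}$ and so they produce no new intersection vertices.

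Next I would show that for each $u\in\mathcal{V}(P)\smallsetminus\{v\}$ the local facet data at $u$ is preserved by the construction. The facets of $P$ through $u$ are either in $\mathcal{C}_v$, in which case they remain in $\mathcal{F}(P\#_{v,w}Q)$ with unchanged label, or they are some $E_{i_1},\dots,E_{i_k}$ (with $k<n$), which are identified with $G_{i_1},\dots,G_{i_k}$; by the matching condition~\eqref{E:unordered} and the definition of $\lambda\#\eta$ we have $(\lambda\#\eta)(G_i)=\lambda(E_i)$. So the unordered multiset of labels at $u$ is the same in $P$ and in $P\#_{v,w}Q$. The symmetric statement holds for $u\in\mathcal{V}(Q)\smallsetminus\{w\}$. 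The step requiring genuine care is verifying that the \emph{ordered} set $\mathcal{F}(P\#_{v,w}Q)_u$ (ordered so that inward normals form a positive basis, cf.~\eqref{E:F(P)}) coincides with $\mathcal{F}(P)_u$; this follows because the construction modifies $P$ only within a projective neighbourhood of $v$ disjoint from $u$, so the inward normals and the induced orientation at $u$ are unchanged. By~\eqref{E:Psigma} we therefore obtain $\sigma(u)$ in $P\#_{v,w}Q$ equal to $\sigma(u)$ in $P$, and an analogous statement for vertices coming from $Q$.

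Granted those bijections of ordered facet data, the quasitoric polynomials decompose directly:
\begin{align*}
\mathfrak{g}(P\#_{v,w}Q,\lambda\#\eta) &=\sum_{u\in\mathcal{V}(P)\smallsetminus\{v\}}\lambda(\mathcal{F}(P)_u)\;+\;\sum_{u\in\mathcal{V}(Q)\smallsetminus\{w\}}\eta(\mathcal{F}(Q)_u)\\
&=\bigl(\mathfrak{g}(P,\lambda)-\lambda(\mathcal{F}(P)_v)\bigr)\;+\;\bigl(\mathfrak{g}(Q,\eta)-\eta(\mathcal{F}(Q)_w)\bigr).
\end{align*}
The hypothesis~\eqref{E:o} is precisely $\lambda(\mathcal{F}(P)_v)+\eta(\mathcal{F}(Q)_w)=0$, so the two boundary terms cancel and the required identity $\mathfrak{g}(P\#_{v,w}Q,\lambda\#\eta)=\mathfrak{g}(P,\lambda)+\mathfrak{g}(Q,\eta)$ drops out.

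The main obstacle is the orientation bookkeeping in the middle step: one must ensure that passing from $P$ (respectively $Q$) to $P\#_{v,w}Q$ does not flip the sign of any surviving vertex. This is where the geometric reflection picture described just before the definition of the connected sum is essential: at $v$ and $w$ the vertex figures $P/v$ and $Q/w$ are glued as mirror images across $\{x_1=0\}$, which is consistent precisely because $\sigma(v)\ne\sigma(w)$ (see~\eqref{E:ne}), while away from $v$ and $w$ the projective transformations $\pi_1,\pi_2$ are orientation-preserving local diffeomorphisms, so the signed ordering defining each $\sigma(u)$ is unaffected.
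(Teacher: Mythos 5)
Your proof is correct and follows essentially the same route as the paper, namely expressing $\mathfrak{g}(P\#_{v,w}Q,\lambda\#\eta)$ as $(\mathfrak{g}(P,\lambda)-\lambda(\mathcal{F}(P)_v))+(\mathfrak{g}(Q,\eta)-\eta(\mathcal{F}(Q)_w))$ and then cancelling via~\eqref{E:o}; the paper's proof is a terse two-line version of exactly this. You supply more of the underlying bookkeeping (vertex identification, label matching, and the orientation-preservation of $\pi_1,\pi_2$ away from the glued vertices) that the paper leaves implicit.
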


\begin{proof}
Using~\eqref{E:o} we have
\begin{align*}
\mathfrak{g}{(P\#_{v,w}Q,\lambda\#\eta)}&=(\mathfrak{g}{(P,\lambda)}-\lambda(\mathcal{F}(P)_{v}))+(\mathfrak{g}{(Q,\eta)}-\eta(\mathcal{F}(Q)_{w}))\\
&=\mathfrak{g}{(P,\lambda)}+\mathfrak{g}{(Q,\eta)}.
\end{align*}
\end{proof}

We can now prove the following formula which tells us, like in non-equivariant bordism, that taking connected sums is equivalent to disjoint unions.

\begin{corollary}\label{C:M}
The following holds:
\[
\mathcal{M}(P\#_{v,w} Q,\lambda\#\eta)=\mathcal{M}(P,\lambda)+\mathcal{M}(Q,\eta)
\]
in $\Xi_n$.
\end{corollary}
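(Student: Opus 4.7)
The plan is to reduce the statement to the additive identity of Lemma~\ref{L:connsum} by passing through the commutative diagram~\eqref{E:commdiag}. Concretely, the corollary is an equality of classes in $\Xi_n$, and the rightmost vertical map $\tilde{g}\colon \Xi_*\to K_*$ is an isomorphism by Theorem~\ref{T:g}. Therefore, to prove the equality in $\Xi_n$, it suffices to check the corresponding equality after applying $\tilde{g}$, which (by commutativity of~\eqref{E:commdiag} and the isomorphism $\mathfrak{K}_*\cong K_*$) is equivalent to checking the equality in $\mathfrak{K}_n$ after applying $\mathfrak{g}$.

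First I would apply $\tilde{g}\circ \mathcal{M}$ to the left-hand side $(P\#_{v,w}Q,\lambda\#\eta)$, using commutativity of~\eqref{E:commdiag} to rewrite this as the image, under the isomorphism $\mathfrak{K}_*\cong K_*$, of $\mathfrak{g}(P\#_{v,w}Q,\lambda\#\eta)$. By Lemma~\ref{L:connsum} this quasitoric polynomial splits additively as $\mathfrak{g}(P,\lambda)+\mathfrak{g}(Q,\eta)$. Next, running the commutative square in the other direction for each summand, I recover $\tilde{g}(\mathcal{M}(P,\lambda))$ and $\tilde{g}(\mathcal{M}(Q,\eta))$ in $K_n$. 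Since the bottom row and the groups $K_n$, $\mathfrak{K}_n$, and $\Xi_n$ are all abelian under disjoint union, this sum is exactly $\tilde{g}\bigl(\mathcal{M}(P,\lambda)+\mathcal{M}(Q,\eta)\bigr)$.

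Putting the chain together yields
\[
\tilde{g}\bigl(\mathcal{M}(P\#_{v,w}Q,\lambda\#\eta)\bigr)=\tilde{g}\bigl(\mathcal{M}(P,\lambda)+\mathcal{M}(Q,\eta)\bigr)\in K_n,
\]
and injectivity of $\tilde{g}$ (which is part of Theorem~\ref{T:g}) then gives the desired identity in $\Xi_n$. There is no real obstacle here: all the geometric content has already been absorbed into Lemma~\ref{L:connsum} and the identification $\Xi_*\cong K_*\cong \mathfrak{K}_*$, so the corollary is essentially a formal consequence. The only thing to be mildly careful about is ensuring that $\mathcal{M}$ and $\mathfrak{g}$ are both genuinely additive with respect to disjoint union of quasitoric pairs (so that adding quasitoric pairs on the left corresponds to adding bordism classes on the right), but this is immediate from the definitions of $\mathcal{Q}_*$ and $\Xi_*$.
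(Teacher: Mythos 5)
Your proof is correct and follows essentially the same path as the paper's own (one-line) proof, which cites \eqref{E:dual}, \eqref{E:commdiag}, and Lemma~\ref{L:connsum}: you simply spell out the diagram chase and the use of injectivity of $\tilde{g}$ that the paper leaves implicit.
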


\begin{proof}
This follows directly from~\eqref{E:dual},~\eqref{E:commdiag} and Lemma~\ref{L:connsum}.
\end{proof}

\begin{remark}[1-Dimensional Case]\label{R:1-dim}
A combinatorial simple 1-dimensional polytope is equivalent to $\Delta^1$. This is a special case as now the vertices are also the facets. We can still form the connected sum of polytopes to obtain $\Delta^1\#_{v,w} \Delta^1=\Delta^1$, where
\[
\mathcal{F}(\Delta^1\#_{v,w} \Delta^1)=\mathcal{C}_v\cup \mathcal{C}_w.
\]
If we have two quasitoric pairs $(\Delta^1,\lambda)$ and $(\Delta^1,\eta)$ such that
\[
\lambda(\mathcal{F}(\Delta^1)_v)+\eta(\mathcal{F}(\Delta^1)_w)=0,
\]
for vertices $v,w$, then we can form the connected sum of quasitoric pairs as above and we obtain
\[
(\Delta^1,\lambda)\#_{v,w} (\Delta^1,\eta)=(\Delta^1,\lambda\#_{v,w}\eta),
\]
where the characteristic function is now given by
\[
\lambda\#\eta(F)=
\begin{cases}
\lambda(F), &\text{for $F\in \mathcal{C}_v$};\\
\eta(F), &\text{for $F\in \mathcal{C}_w$}.
\end{cases}
\]
The formula of Corollary~\ref{C:M} obviously still holds in this case.
\end{remark}

We now prove a proposition which, for $n\geq 2$, allows us to take two quasitoric pairs $(P_1^n,\lambda_1)$ and $(P_2^n,\lambda_2)$ and form a quasitoric pair $(P,\lambda)$ such that
\[
\mathfrak{g}(P,\lambda)=\mathfrak{g}(P_1^n,\lambda_1)+\mathfrak{g}(P_2^n,\lambda_2).
\]

\begin{proposition}\label{P:add}
Suppose $h_1$ and $h_2$ are the quasitoric polynomials of quasitoric pairs $(P_1^n,\lambda_1)$ and $(P_2^n,\lambda_2)$ respectively, for $n>1$. Then $h_1+h_2$ is the quasitoric polynomial of a quasitoric pair.
\end{proposition}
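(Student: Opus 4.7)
The plan is to produce the desired quasitoric pair via an iterated connected sum, using Lemma~\ref{L:connsum}. A direct connected sum $(P_1,\lambda_1)\mathbin{\#_{v,w}}(P_2,\lambda_2)$ would realise $h_1+h_2$ immediately, but requires a pair of vertices $v\in P_1$, $w\in P_2$ satisfying $\lambda_1(\mathcal{F}(P_1)_v)+\lambda_2(\mathcal{F}(P_2)_w)=0$, which generically need not exist among the given data.

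To circumvent this, I would construct an auxiliary ``bridging'' quasitoric pair $(P_0,\lambda_0)$ with three properties: (i) $\mathfrak{g}(P_0,\lambda_0)=0$ in $\mathfrak{K}_n$; (ii) there is a vertex $v_0\in P_0$ whose monomial equals $-\lambda_1(\mathcal{F}(P_1)_v)$ for some chosen vertex $v\in P_1$; (iii) there is a distinct vertex $v_0'\in P_0$ whose monomial equals $-\lambda_2(\mathcal{F}(P_2)_w)$ for some chosen $w\in P_2$. Given such $(P_0,\lambda_0)$, form the iterated connected sum
\[
(P,\lambda)\letbe (P_1,\lambda_1)\mathbin{\#_{v,v_0}}(P_0,\lambda_0)\mathbin{\#_{v_0',w}}(P_2,\lambda_2).
\]
Since $v_0'\neq v_0$, the vertex $v_0'$ persists through the first connected sum, so the second is well defined. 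Two applications of Lemma~\ref{L:connsum}, together with property (i), then yield
\[
\mathfrak{g}(P,\lambda)=h_1+\mathfrak{g}(P_0,\lambda_0)+h_2=h_1+h_2,
\]
as desired.

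The crux of the argument is the construction of the bridge pair $(P_0,\lambda_0)$. A natural candidate for $P_0$ is the prism $\Delta^{n-1}\times \Delta^1$, whose simplicity and non-triviality require the hypothesis $n>1$ (which accords with the failure of the argument in the one-dimensional case treated separately in Remark~\ref{R:1-dim}). One labels the $n$ ``side'' facets and the two end $\Delta^{n-1}$-facets so that a vertex on one end and a vertex on the other end realise the monomials $-\lambda_1(\mathcal{F}(P_1)_v)$ and $-\lambda_2(\mathcal{F}(P_2)_w)$ respectively, exploiting the reflective symmetry between the two ends to force the cancellation in property~(i). The main obstacle will be verifying that the quasitoric basis condition $(\star)$ can be simultaneously satisfied at all $2n$ vertices of the prism for arbitrary prescribed monomials at the two ends; in cases where the prism alone is too rigid to accommodate the prescribed data, one replaces it by an iterated sequence of vertex truncations (cf.\ Example~\ref{e:simp}), each truncation supplying one further label and transforming the facet data at the truncated vertex, progressively bridging the labellings at $v$ and $w$.
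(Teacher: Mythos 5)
Your overall strategy matches the paper's: handle the direct case via a single connected sum, and otherwise build a ``bridge'' quasitoric pair with zero quasitoric polynomial to splice $(P_1,\lambda_1)$ to $(P_2,\lambda_2)$ by iterated connected sums. You also correctly identify the prism $\Delta^{n-1}\times\Delta^1$ as the right building block and recognise that the crux is constructing a bridge whose quasitoric polynomial vanishes while exhibiting prescribed monomials at two of its vertices. However, there is a genuine gap at exactly that crux. A single prism, however labelled, generically does \emph{not} have zero quasitoric polynomial, and you cannot simultaneously prescribe arbitrary monomials at vertices on both ends while satisfying condition $(\star)$ at all $2n$ vertices --- this is precisely the obstacle you flag, and the proposal does not resolve it. Your fallback of ``an iterated sequence of vertex truncations'' (i.e.\ connected sums with simplices, cf.\ Example~\ref{e:simp}) does not obviously help: truncation adds one facet and $n-1$ new vertices with new monomials, so controlling the total polynomial and producing the two required vertex monomials while keeping the whole thing zero is not established.

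The paper's device, which is the missing idea, is different and more precise. For each indeterminate to be changed, it takes \emph{two copies} of the prism $Q=\Delta^1\times\Delta^{n-1}$ with labellings $\eta$ and $\widetilde{\eta}$ that agree on the side facets and swap the labels on the two end facets $\{0\}\times\Delta^{n-1}$ and $\{1\}\times\Delta^{n-1}$. Then $\mathfrak{g}(Q,\eta)+\mathfrak{g}(Q,\widetilde{\eta})=0$ automatically (same polytope, same multiset of labels, opposite orientation), and connected-summing $(Q,\eta)\#(Q,\widetilde{\eta})$ at a suitable pair of vertices gives a bridge $(W,\beta)$ with $\mathfrak{g}(W,\beta)=0$ that changes exactly one indeterminate between the two ``ports.'' Composing $n$ such bridges then passes from an arbitrary monomial of $h_1$ to an arbitrary prescribed monomial of $-h_2$, indeterminate by indeterminate. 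Without this pair-of-prisms cancellation (or some equivalent mechanism), your bridge $(P_0,\lambda_0)$ is not actually exhibited, so the proof as written is incomplete.
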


\begin{proof}
If there are vertices $v_1\in P_1$ and $v_2\in P_2$ such that
\[
\lambda_1(\mathcal{F}(P_1)_{v_1})=s_1\wedge s_2\wedge\dots\wedge s_n\quad \text{and}\quad \lambda_2(\mathcal{F}(P_2)_{v_2})=-s_1\wedge s_2\wedge\dots\wedge s_n
\]
which implies
\begin{equation}\label{eq:pm}
\lambda_1(\mathcal{F}(P_1)_{v_1})+\lambda_2(\mathcal{F}(P_2)_{v_2})=0,
\end{equation}
then we can form the connected sum of $(P_1,\lambda_1)$ and $(P_2,\lambda_2)$ at $v_1$ and $v_2$ to obtain the quasitoric pair $(P_1\#_{v_1,v_2}P_2,\lambda)$, where
\[
\mathfrak{g}{(P_1\#_{v_1,v_2}P_2,\lambda)}=\mathfrak{g}{(P_1,\lambda_1)}+\mathfrak{g}{(P_2,\lambda_2)}.
\]
Therefore, $h_1+h_2$ is the quasitoric polynomial of $(P_1\#_{v_1,v_2}P_2,\lambda)$.

Now suppose there are no such vertices satisfying~\eqref{eq:pm}. If $h_1$ contains a monomial $s_1\wedge s_2\wedge\dots\wedge s_n$ but $h_2$ contains a monomial $-\widetilde{s_1}\wedge s_2\wedge\dots\wedge s_n$, then there must be vertices $v_1\in P_1$ and $v_2\in P_2$ such that
\[
\lambda_1(\mathcal{F}(P_1)_{v_1})=s_1\wedge s_2\wedge\dots\wedge s_n\quad \text{and}\quad \lambda_2(\mathcal{F}(P_2)_{v_2})=-\widetilde{s_1}\wedge s_2\wedge\dots\wedge s_n.
\]
Consider the quasitoric pair $(Q,\eta)$, where $Q=\Delta^1\times \Delta^{n-1}\subset \R^1\times \R^{n-1}$ with the characteristic function
\[
\eta(F)=
\begin{cases}
\widetilde{s_1}, &F=\{0\}\times \Delta^{n-1};\\
s_1, &F=\{1\}\times \Delta^{n-1};\\
s_{i+1}, &F=\Delta^1\times D_i,\ \text{for}\ 1\leq i<n;\\
\mathfrak{s}, &F=\Delta^1\times D_n,
\end{cases}
\]
where $\mathcal{F}(\Delta^{n-1})=\{ D_1,\dots,D_n\}$ and $\mathfrak{s}:=s_2+\dots+s_n\in \Z^n$, which represents an element of $J_n^*$. Then define the quasitoric pair $(Q,\widetilde{\eta})$ where the characteristic function is given by
\[
\widetilde{\eta}(F)=
\begin{cases}
s_1, &F=\{0\}\times \Delta^{n-1};\\
\widetilde{s_1}, &F=\{1\}\times \Delta^{n-1};\\
s_{i+1}, &F=\Delta^1\times D_i,\ \text{for}\ 1\leq i<n;\\
\mathfrak{s}, &F=\Delta^1\times D_n.
\end{cases}
\]
Notice that
\begin{equation}\label{E:Q}
\mathfrak{g}(Q,\eta)+\mathfrak{g}(Q,\widetilde{\eta})=0
\end{equation}
as the pairs $(Q,\eta)$ and $(Q,\widetilde{\eta})$ have the same base polytope and the same indeterminates labelling its facets but with opposite orientations. So we can form the connected sum
\begin{equation}\label{E:Q1}
(W,\beta)\letbe (Q,\eta)\#_{q,\tilde{q}} (Q,\widetilde{\eta})
\end{equation}
at vertices $q$ and $\tilde{q}$ such that
\[
\eta(\mathcal{F}(Q)_q)+\widetilde{\eta}(\mathcal{F}(Q)_{\tilde{q}})=0.
\]
We assume that the monomials $\eta(\mathcal{F}(Q)_q)$ and $\widetilde{\eta}(\mathcal{F}(Q)_{\tilde{q}})$ are made up of the indeterminates $s_1,s_3,\dots,s_n,\mathfrak{s}$.

It is easy to see that we have vertices $u_1,u_2\in W$ such that  
\[
\lambda_1(\mathcal{F}(P_1)_{v_1})+\beta(\mathcal{F}(W)_{u_1})=0=\lambda_2(\mathcal{F}(P_2)_{v_2})+\beta(\mathcal{F}(W)_{u_2}).
\]
So we can form the connected sum
\[
(P_1,\lambda_1)\#_{v_1,u_1}(W,\beta)\#_{u_2,v_2}(P_2,\lambda_2).
\]
This has quasitoric polynomial equal to
\[
\mathfrak{g}(P_1,\lambda_1)+\mathfrak{g}(W,\beta)+\mathfrak{g}(P_2,\lambda_2)=h_1+\mathfrak{g}(W,\beta)+h_2
\]
by Lemma~\ref{L:connsum}. By~\eqref{E:Q} and~\eqref{E:Q1}, the quasitoric polynomial $\mathfrak{g}(W,\beta)=0$ so we have that $h_1+h_2$ is the quasitoric polynomial of a quasitoric pair.

We have therefore shown that if we have a monomial of $h_1$ and a monomial of $h_2$, that differ by a single indeterminate, we can find a quasitoric pair whose quasitoric polynomial is $h_1+h_2$. We now consider the situation where there are no monomials of $h_1$ that have any indeterminates in common with any monomial of $h_2$. Our course of action will be to take a monomial of $h_1$ and perform the connected sum operation of $(P_1,\lambda_1)$ with $n$ bounding pairs such that, after each connected sum, there will be a monomial of the resulting quasitoric polynomial that has one more indeterminate in common with a monomial of $h_2$. 

Suppose $h_1$ contains a monomial $s_1\wedge s_2\wedge\dots\wedge s_n$ and $h_2$ contains a monomial $-\widetilde{s_1}\wedge \widetilde{s_2}\wedge\dots\wedge \widetilde{s_n}$, where $s_i\ne \widetilde{s_j}$ for any $1\leq i,j\leq n$. From what we have shown above we can obtain a quasitoric pair $(P_1,\lambda_1)\#_{v_1,u_1}(W,\beta_1)$ such that there is a vertex $p_1$ with associated monomial $\widetilde{s_1}\wedge s_2\wedge\dots\wedge s_n$. In the same way as above we construct quasitoric pairs $(Q,\eta_2)$ and $(Q,\widetilde{\eta_2})$ such that
\[
\eta_2(F)=
\begin{cases}
\widetilde{s_2}, &F=\{0\}\times \Delta^{n-1};\\
s_2, &F=\{1\}\times \Delta^{n-1};\\
\widetilde{s_1}, &F=\Delta^1\times D_1;\\
s_{i+1}, &F=\Delta^1\times D_i,\ \text{for}\ 2\leq i<n;\\
\mathfrak{s}_2, &F=\Delta^1\times D_n,
\end{cases}
\]
where $\mathfrak{s}_2\letbe \widetilde{s_1}+s_3+\dots+s_n\in \Z^n$. Similarly we define
\[
\widetilde{\eta_2}(F)=
\begin{cases}
s_2, &F=\{0\}\times \Delta^{n-1};\\
\widetilde{s_2}, &F=\{1\}\times \Delta^{n-1};\\
\widetilde{s_1}, &F=\Delta^1\times D_1;\\
s_{i+1}, &F=\Delta^1\times D_i,\ \text{for}\ 2\leq i<n;\\
\mathfrak{s}_2, &F=\Delta^1\times D_n.
\end{cases}
\]
Obviously,
\[
\mathfrak{g}(Q,\eta_2)+\mathfrak{g}(Q,\widetilde{\eta_2})=0
\]
and we define
\[
(W,\beta_2)\letbe (Q,\eta_2)\#_{q_2,\widetilde{q_2}} (Q,\widetilde{\eta_2})
\]
taking the connect sum at vertices that have associated monomials containing the indeterminates $s_2,s_3,\dots,s_n,\mathfrak{s}_2$.

The pair $(W,\beta_2)$ has a vertex $p_1'$ with associated monomial $-\widetilde{s_1}\wedge s_2\wedge\dots\wedge s_n$. We can therefore take the connect sum
\[
(P_1,\lambda_1)\#_{v_1,u_1}(W,\beta_1)\#_{p_1,p_1'} (W,\beta_2),
\]
which has a vertex $p_2$ with associated monomial $\widetilde{s_1}\wedge \widetilde{s_2}\wedge s_3\wedge\dots\wedge s_n$.

Continuing this procedure, we can further construct a series of quasitoric pairs $(W,\beta_3),\dots,(W,\beta_n)$, all with zero quasitoric polynomial, so that we can finally obtain a quasitoric pair
\begin{align*}
(P_1,\lambda_1)\#_{v_1,u_1}(W,\beta_1)\#_{p_1,p_1'} (W,\beta_2)\#_{p_2,p_2'}\cdots\#_{p_{n-1},p_{n-1}'} (W,\beta_n),
\end{align*}
such that there is a vertex $p_n$ with associated monomial $\widetilde{s_1}\wedge\widetilde{s_2}\wedge\dots\wedge \widetilde{s_n}$. We can therefore perform the connected sum operation of this quasitoric pair with $(P_2,\lambda_2)$ to obtain a quasitoric pair $(P,\lambda)$ such that
\[
\mathfrak{g}(P,\lambda)=\mathfrak{g}(P_1,\lambda_1)+\mathfrak{g}(P_2,\lambda_2)=h_1+h_2.
\]
\end{proof}

\subsubsection*{The 2-dimensional Case}

A connected polynomial $h\in \mathfrak{K}_2$ is of the form
\[
h=s_1\wedge s_2+s_2\wedge s_3+\dots+s_{n-1}\wedge s_n+s_n\wedge s_1,
\]
where $s_i\in J_n^*$, for $1\leq i\leq n$, and $s_i\neq s_j$, for $i\neq j$. Note that for $h$ to be non-zero, $n$ must be greater than~2. If we take an $n$-sided polygon $P\subset \R^2$ and label its facets, in a clockwise direction, by $s_1,\dots,s_n$, then we have a quasitoric pair whose quasitoric polynomial is $h$.

Therefore, if we take any non-zero polynomial $h\in \mathfrak{K}$ and decompose it into connected parts $h=h_1+\dots+h_s$, for each $h_i$ we have a quasitoric pair $(P_i,\lambda_i)$ and
\[
\mathfrak{g}((P_1,\lambda_1)+\dots+(P_s,\lambda_s))=h.
\]
By Proposition~\ref{P:add}, we know that there is a quasitoric pair $(P,\lambda)$ such that $\mathfrak{g}(P,\lambda)=h$. We therefore have

\begin{corollary}
The homomorphism of abelian groups
\[
\mathcal{M}\colon \mathcal{Q}_2\longrightarrow \Xi_2
\]
is surjective and, in particular, for each $x\in \Xi_2$, there exists a quasitoric pair $(P,\lambda)$ such that $\mathcal{M}(P,\lambda)=x$. 
\end{corollary}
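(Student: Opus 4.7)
The plan is to reduce surjectivity of $\mathcal{M}$ to surjectivity of $\mathfrak{g}\colon \mathcal{Q}_2 \to \mathfrak{K}_2$ via the commutative diagram~\eqref{E:commdiag}, in which $\tilde{g}$ is already an isomorphism by Theorem~\ref{T:g}; it then suffices to exhibit, for every $h \in \mathfrak{K}_2$, a quasitoric pair whose quasitoric polynomial is $h$.

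Given $h \in \mathfrak{K}_2$, I would first decompose it into connected summands $h = h_1 + \cdots + h_r$ and establish the following structural lemma: each connected $h_i \in \mathfrak{K}_2$ is necessarily of the cyclic form
\[
h_i = s_1 \wedge s_2 + s_2 \wedge s_3 + \cdots + s_{m-1} \wedge s_m + s_m \wedge s_1,
\]
with $m \geq 3$, with the $s_j$ pairwise distinct, and with each consecutive pair $\{s_j, s_{j+1}\}$ forming a $\Z$-basis of $\Z^2$. The observation behind this is that since $d(s \wedge t) = t - s$, a $d$-closed faithful polynomial in $\Lambda^2(J_2^*)$ is the same datum as an Eulerian directed multigraph on its indeterminates whose edges satisfy the basis condition. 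Any such multigraph decomposes into edge-disjoint directed cycles, each contributing a $d$-closed summand, so the paper's notion of connectedness forces $h_i$ to consist of a single simple directed cycle.

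With the structural description in hand, I would realise each $h_i$ geometrically by taking $P_i \subset \R^2$ to be an oriented convex $m$-gon with facets $F_1, \dots, F_m$ enumerated in cyclic order, and setting $\lambda_i(F_j) = s_j$. Condition~$(\star)$ of Definition~\ref{D:qpair} at the vertex $v_j = F_j \cap F_{j+1}$ reduces precisely to the requirement that $\{s_j, s_{j+1}\}$ be a basis of $\Z^2$, so $(P_i, \lambda_i)$ is a genuine quasitoric pair; a direct check using~\eqref{E:F(P)} and~\eqref{E:Psigma}, possibly after reversing the cyclic direction to align signs, yields $\mathfrak{g}(P_i, \lambda_i) = h_i$.

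Finally I would assemble the pieces. An iterated application of Proposition~\ref{P:add}, available precisely because $n = 2 > 1$, combines $(P_1, \lambda_1), \dots, (P_r, \lambda_r)$ into a single quasitoric pair $(P, \lambda)$ with $\mathfrak{g}(P, \lambda) = h_1 + \cdots + h_r = h$; commutativity of~\eqref{E:commdiag} then gives $\mathcal{M}(P, \lambda) = x$. The main obstacle I expect is the structural lemma identifying connected $d$-closed polynomials in $\mathfrak{K}_2$ with simple oriented cycles: once that combinatorial statement is secured, the polygon construction and the merging via Proposition~\ref{P:add} are essentially routine.
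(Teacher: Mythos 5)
Your proposal is correct and follows essentially the same route as the paper: classify connected polynomials in $\mathfrak{K}_2$ as simple oriented cycles, realise each by a convex polygon with cyclically labelled facets, then combine via Proposition~\ref{P:add}. The only difference is that you actually justify the structural characterisation of connected $d$-closed polynomials (via cycle decomposition of Eulerian digraphs), a step the paper asserts without argument.
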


\subsubsection*{The 3-dimensional Case}

The boundary of a simple $n$-polytope is homeomorphic to an $(n-1)$-sphere. Since connected closed 1-dimensional manifolds are 1-spheres we have no problem in the lower dimensional cases.

Take a simplicial triangulation $K$ of a closed surface that is not a sphere, a torus say, and choose a vector in $\Z^3$ for each vertex in such a way that the three vectors associated to the vertices of any 2-simplex form a basis. We can then define an exterior polynomial $h$ by consistently orienting the simplicies and taking the exterior product of the vectors that form a basis of a 2-simplex and summing over all the 2-simplicies. Then $h$ will be faithful and will satisfy $d(h)=0$, so $h$ will be an element of $\mathfrak{K}_3$. For $\mathcal{M}\colon \mathcal{Q}_3\to \Xi_3$ to be surjective we will need that $h$ is the quasitoric polynomial of a quasitoric pair $(P,\lambda)$; but the geometric realisation of $K$ is not a sphere. This can be resolved in certain cases as shown in the following example.

\begin{example}
Consider the following triangulation of the torus where the outer edges are identified in the normal fashion:
\[ \begin{tikzpicture}
\vertex (v1) at (0,0) [label=left:$0$] {};
\vertex (v2) at (1,0) [label=below:$1$] {};
\vertex (v3) at (2,0) [label=right:$0$] {};
\vertex (v4) at (0,1) [label=left:$2$] {};
\vertex (v5) at (1,1) [label=above left:$3$] {};
\vertex (v6) at (2,1) [label=right:$2$] {};
\vertex (v7) at (0,2) [label=left:$0$] {};
\vertex (v8) at (1,2) [label=above:$1$] {};
\vertex (v9) at (2,2) [label=right:$0$] {};
\path
(v1) edge (v2)
(v1) edge (v5)
(v1) edge (v4)
(v2) edge (v3)
(v2) edge (v5)
(v3) edge (v6)
(v4) edge (v5)
(v4) edge (v7)
(v5) edge (v6)
(v2) edge (v6)
(v4) edge (v8)
(v7) edge (v8)
(v5) edge (v8)
(v5) edge (v9)
(v6) edge (v9)
(v8) edge (v9)
;
\end{tikzpicture} \]
We associate to each vertex an element $s_i\in J_3^*$, for $0\leq i\leq 3$, with $s_i\ne s_j$ for $i\neq j$, such that three vertices corresponding to a 2-simplex form a basis of $\Z^3$. Orienting all the 2-simplicies clockwise gives us the following polynomial associated to this simplicial triangulation
\[
h=2(s_0\wedge s_1\wedge s_2+s_1\wedge s_3\wedge s_2+ s_0\wedge s_3\wedge s_1+s_0\wedge s_2\wedge s_3)\in\mathfrak{K}_3.
\]
This can be seen to be the quasitoric polynomial of two disjoint copies of $(\Delta^3,\delta)$, where $\delta(D_i)=s_i$, for $0\leq i\leq 3$. By Proposition~\ref{P:add} we then know that there is a quasitoric pair $(P,\lambda)$ such that $\mathfrak{g}(P,\lambda)=h$.
\end{example}

We now present the following similar example for which we do not have a solution in general as of yet.

\begin{example}
Consider the following triangulation of the torus:
\[ \begin{tikzpicture}
\vertex (v1) at (0,0) [label=left:$0$] {};
\vertex (v2) at (1,0) [label=below:$1$] {};
\vertex (v3) at (2,0) [label=below:$2$] {};
\vertex (v4) at (0,1) [label=left:$4$] {};
\vertex (v5) at (1,1) [label=above left:$7$] {};
\vertex (v6) at (2,1) [label=above left:$8$] {};
\vertex (v7) at (0,2) [label=left:$3$] {};
\vertex (v8) at (1,2) [label=above left:$5$] {};
\vertex (v9) at (2,2) [label=above left:$6$] {};
\vertex (v10) at (3,0) [label=right:$0$] {};
\vertex (v11) at (3,1) [label=right:$4$] {};
\vertex (v12) at (3,2) [label=right:$3$] {};
\vertex (v13) at (3,3) [label=right:$0$] {};
\vertex (v14) at (2,3) [label=above:$2$] {};
\vertex (v15) at (1,3) [label=above:$1$] {};
\vertex (v16) at (0,3) [label=left:$0$] {};
\path
(v1) edge (v2)
(v1) edge (v5)
(v1) edge (v4)
(v2) edge (v3)
(v2) edge (v5)
(v3) edge (v6)
(v4) edge (v5)
(v4) edge (v7)
(v5) edge (v6)
(v2) edge (v6)
(v4) edge (v8)
(v7) edge (v8)
(v5) edge (v8)
(v5) edge (v9)
(v6) edge (v9)
(v8) edge (v9)
(v3) edge (v10)
(v3) edge (v11)
(v10) edge (v11)
(v6) edge (v11)
(v11) edge (v12)
(v6) edge (v12)
(v9) edge (v12)
(v12) edge (v13)
(v13) edge (v9)
(v14) edge (v9)
(v13) edge (v14)
(v8) edge (v14)
(v8) edge (v15)
(v14) edge (v15)
(v7) edge (v15)
(v7) edge (v16)
(v15) edge (v16)
;
\end{tikzpicture} \]
The associated polynomial can be written as
\begin{align*}
h=&s_0\wedge(s_1\wedge s_3+s_3\wedge s_6+s_6\wedge s_2+s_2\wedge s_4+s_4\wedge s_7+s_7\wedge s_1)\\
+&s_5\wedge(s_1\wedge s_2+s_2\wedge s_6+s_6\wedge s_7+s_7\wedge s_4+s_4\wedge s_3+s_3\wedge s_1)\\
+&s_8\wedge(s_1\wedge s_7+s_7\wedge s_6+s_6\wedge s_3+s_3\wedge s_4+s_4\wedge s_2+s_2\wedge s_1)\in \mathfrak{K}_3.
\end{align*}
The polynomials inside the brackets can be seen to be associated to hexagons $P_6$ as in the 2-dimensional case. We take three pairs $(P_6\times I,\lambda_i)$, for $i=1,2,3$, where the labelling on the facets of the hexagonal prisms of the form $F\times I$ is induced by the labelling on the hexagons and the labelling on the end facets is given by
\[
\lambda_i(P_6\times \{0\})=
\begin{cases}
s_0, &i=1;\\
s_5, &i=2;\\
s_8, &i=3,
\end{cases}
\]
and
\[
\lambda_i(P_6\times \{1\})=t, \quad \text{for i=1,2,3},
\]
for some $t\in J_3^*$. The sum of the polynomials associated to these three pairs is then equal to $h$. The problem is that these three pairs are not necessarily quasitoric pairs as they do not necessarily satisfy Condition~($\star$). We need to find a $t$ such that at each of the three ends $P_6\times \{ 1\}$ we have a basis associated to each vertex. We do not have a proof yet that a $t$ exists in general but all attempts at finding an example where a $t$ does not exist have failed.
\end{example}

We end with the following conjecture:

\begin{conjecture}
The homomorphism
\[
\mathcal{M}\colon \mathcal{Q}_*\longrightarrow \Xi_*
\]
is surjective.
\end{conjecture}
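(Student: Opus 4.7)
The plan is to reduce the conjecture, via the commutative square~\eqref{E:commdiag} and the isomorphism $\tilde g$ of Theorem~\ref{T:g}, to showing that $\mathfrak{g}\colon \mathcal{Q}_*\to \mathfrak{K}_*$ is surjective, and then to attack the latter by induction on connected components of the target polynomial.

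First I would decompose an arbitrary $h\in \mathfrak{K}_n$ into its connected summands $h=h_1+\cdots+h_s$, each itself in $\mathfrak{K}_n$. By Proposition~\ref{P:add} (valid for $n\geq 2$), once each $h_i$ is realized as the quasitoric polynomial of some pair $(P_i,\lambda_i)$, the full polynomial $h$ is realized by iterated connected sums, possibly padded by the bounding pairs $(W,\beta_j)$ constructed in that proof. For $n=1,2$ surjectivity has already been proved, so the real work is for $n\geq 3$, and the task becomes: every connected $h\in \mathfrak{K}_n$ lies in the image of $\mathfrak{g}$.

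The intended geometric interpretation is that a connected $h$ determines an oriented $(n-1)$-dimensional simplicial pseudomanifold $\Sigma(h)$, whose top-dimensional simplices correspond to the monomials of $h$; the boundary relation $d(h)=0$ is exactly the condition that each $(n-2)$-face appears with cancelling orientation. Realizing $h$ as $\mathfrak{g}(P,\lambda)$ amounts to finding a simple $n$-polytope $P$ whose dual boundary complex $\partial P^{\ast}$ agrees with $\Sigma(h)$ under a characteristic map satisfying Condition~$(\star)$. The strategy is then to modify $\Sigma(h)$ by inserting zero-polynomial quasitoric cobordisms of the form $(W,\beta)=(Q,\eta)\#(Q,\widetilde{\eta})$ used in the proof of Proposition~\ref{P:add}, together with prism-type pairs $(P_6\times I,\lambda_i)$ analogous to the hexagonal example above, in order to decompose $\Sigma(h)$ as a formal sum of simplicial spheres, each of which does bound a simple polytope.

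The hard part will be exactly the obstacle flagged by the author in the final higher-genus example: given a cutting scheme that splits $\Sigma(h)$ along an $(n-2)$-cycle, one must choose an auxiliary label $t\in J_n^{\ast}$ on the new facets so that every vertex created by the cut has its $n$ incident labels forming a $\Z$-basis of $\Z^n$. This is a genuinely Diophantine condition and there is no a priori reason such $t$ exists. My best guess at a workaround is to precompose each splitting with a finite sequence of stellar subdivisions of $\Sigma(h)$, corresponding on the manifold side to equivariant blowups, thereby introducing sufficient freedom in the characteristic values to solve the basis condition. Proving that such subdivisions always suffice, and that the resulting sphere-summands really are polytopal rather than merely PL, is where the bulk of the difficulty lies and is likely to require either a constructive algorithm on simplicial spheres or an appeal to polytopality results (e.g.\ a suitable strengthening of Steinitz's theorem in higher dimensions, which fails in general) applied only to the particular simplicial spheres produced by the algorithm.
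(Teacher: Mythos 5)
The statement you are trying to prove is labelled a \emph{Conjecture} in the paper, and the author explicitly does not prove it: the paper establishes surjectivity of $\mathcal{M}$ only for $\mathcal{Q}_1\to\Xi_1$ and $\mathcal{Q}_2\to\Xi_2$, treats two illustrative examples in dimension~3, and then states in so many words that a proof of the key existence claim (of a suitable auxiliary label $t\in J_n^*$ satisfying Condition~$(\star)$ at every newly created vertex) is not available. So there is no paper proof for you to match; the best you could do is close the gap the author leaves open.

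Your reduction is faithful to the paper's own framework: passing through the commutative square~\eqref{E:commdiag} and Theorem~\ref{T:g} to reduce the conjecture to surjectivity of $\mathfrak{g}\colon\mathcal{Q}_*\to\mathfrak{K}_*$, decomposing a target $h\in\mathfrak{K}_n$ into connected summands, invoking Proposition~\ref{P:add} to assemble them, and then identifying the remaining task as realizing each connected $h$ by a quasitoric pair. You also correctly pinpoint the same obstruction the paper flags: a connected $h$ encodes an oriented $(n-1)$-pseudomanifold that need not be a sphere, and any splitting into sphere pieces requires auxiliary characteristic vectors for the new facets to satisfy the unimodularity condition~$(\star)$, which is a genuine Diophantine constraint with no a priori solution. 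Where the proposal stops being a proof is precisely here. You suggest precomposing with stellar subdivisions (equivariant blowups) to ``introduce sufficient freedom,'' but you give no argument that (i) a subdivision sequence can always be chosen so that the unimodularity constraints become simultaneously solvable, or (ii) the resulting simplicial spheres are polytopal rather than merely PL --- and you yourself note that higher-dimensional Steinitz-type polytopality results fail in general. Until both of those are established, the argument does not go through, and the statement remains, as in the paper, a conjecture rather than a theorem.
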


\end{document}